

\documentclass[reqno,11pt]{amsart}
\usepackage{latexsym}
\usepackage{euscript}
\usepackage{amssymb}
\usepackage{float}      
\usepackage{array}      
\usepackage{graphicx}
\usepackage{epic}            
\usepackage{enumerate}   
\usepackage{color}

\setlength{\textwidth}{6.0in} 
\setlength{\textheight}{8.5 in}
\hoffset=-0.45in 
\voffset=-0.5in

\setcounter{MaxMatrixCols}{20}

\newtheorem{Thm}[equation]{Theorem}
\newtheorem{Lem}[equation]{Lemma}
\newtheorem{Cor}[equation]{Corollary}
\newtheorem{Prop}[equation]{Proposition}

\theoremstyle{remark}

\newtheorem{Rem}[equation]{Remark}
\newtheorem{Example}[equation]{Example}
\newtheorem{Def}[equation]{Definition}

\numberwithin{equation}{section}



\newcommand{\R}{\mathbb R}           
\newcommand{\C}{\mathbb C}           
\newcommand{\Z}{\mathbb Z}           

\newcommand{\Ad}{\rm Ad}
\newcommand{\ad}{\rm ad}
\newcommand{\rank}{\rm rank}

\newcommand{\Ker}{\rm Ker}
\newcommand{\Hom}{\rm Hom}

\newcommand{\GR}{\underline{\text{Gr}}}

\newcommand{\f}[1]{{\mathfrak{#1}}}           
\newcommand{\fb}{{\mathfrak b}}
\newcommand{\fg}{{\mathfrak g}}
\newcommand{\fh}{{\mathfrak h}}
\newcommand{\fk}{{\mathfrak k}}
\newcommand{\fl}{{\mathfrak l}}

\newcommand{\fn}{{\mathfrak n}}

\newcommand{\fp}{{\mathfrak p}}
\newcommand{\fq}{{\mathfrak q}}

\newcommand{\fu}{{\mathfrak u}}

\newcommand{\fB}{{\mathfrak B}}

\newcommand{\ga}{\alpha}
\newcommand{\gb}{\beta}

\newcommand{\gl}{\lambda}
\newcommand{\gL}{\Lambda}
\newcommand{\gd}{\delta}
\newcommand{\gD}{\Delta}

\renewcommand{\gg}{\gamma}
\newcommand{\gG}{\Gamma}
\newcommand{\gs}{\sigma}

\newcommand{\gt}{\theta}

\newcommand{\eps}{\varepsilon}

\newcommand{\Cal}{\mathcal}

\newcommand{\cN}{{\mathcal N}}
\newcommand{\cO}{{\mathcal O}}
\newcommand{\cE}{{\mathcal E}}
\newcommand{\cR}{{\mathcal R}}
\newcommand{\cS}{{\mathcal S}}
\newcommand{\cM}{\mathcal M}

\newcommand{\cF}{\mathcal F}
\newcommand{\cB}{\mathcal B}
\newcommand{\cU}{\mathcal U}

\newcommand{\bK}{\mathbf K}

\renewcommand{\bar}[1]{\overline{#1}}

\newcommand{\IP}[2]{\langle#1\,, #2\rangle}     


\newcommand{\supp}{\rm supp}
\newcommand{\ann}{\rm ann}
\newcommand{\Spin}{\rm Spin}
\newcommand{\Sp}{\rm Sp}
\newcommand{\even}{\rm even}
\newcommand{\odd}{\rm odd}
\newcommand{\Lie}{\rm Lie}

\usepackage{mathdots}  
\newcommand{\Ind}{{\rm Ind}}
\newcommand{\dual}{\,\widehat{\;}\,}
\newcommand{\X}{X_\fb}
\newcommand{\XL}{X_{\fb\cap\fl}}
\newcommand \unb[2]{\underset{#1}{{\underbrace{#2}}}}
\newcommand{\gr}{{\rm gr}}
\newcommand{\Rep}{{\rm Rep}}


\begin{document}

\parskip=4pt 
\baselineskip=14pt

\title[]{Dirac Index and associated cycles of Harish-Chandra modules}
\author{Salah~Mehdi}
\address{Institut Elie Cartan de Lorraine\\
CNRS - UMR 7502  \\
Universit\'e de Lorraine, Metz, F-57045, France}
\email{salah.mehdi@univ-lorraine.fr}

\author{Pavle~Pand\v{z}i\'{c}}
\address{  Department of Mathematics \\
Faculty of Science \\
  University of Zagreb \\
  Zagreb, Croatia}
\email{pandzic@math.hr}
 
\author{David A.~Vogan}
\address{  Department of Mathematics \\
 Massachusetts Institute of Technology \\
 Cambridge, MA 02139, USA}
\email{dav@math.mit.edu}

\author{Roger Zierau}
\address{  Department of Mathematics \\
  Oklahoma State University \\
  Stillwater, Oklahoma 74078, USA}
\email{roger.zierau@okstate.edu}
 


\keywords{$(\fg,K)$-module, Dirac cohomology, Dirac index, 
equivariant K-theory, nilpotent orbit, associated variety, 
associated cycle, Springer correspondence}
\subjclass[2010]{Primary 22E47; Secondary 22E46}
\thanks{The second named author was supported by grant no. 
4176 of the Croatian Science Foundation and by the QuantiXLie Centre of Excellence, a project
cofinanced by the Croatian Government and European Union through the European Regional Development Fund - the Competitiveness and Cohesion Operational Programme 
(KK.01.1.1.01.0004). The third named author was supported 
in part by NSF grant DMS 0967272.}

\begin{abstract}
Let $G_{\R}$ be a simple real linear Lie group with maximal compact subgroup $K_{\R}$ and assume that $\rank(G_\R)=\rank(K_\R)$.     For  any representation $X$ of Gelfand-Kirillov dimension  $\frac12 \dim(G_{\R}/K_{\R})$, we consider the polynomial on the dual of a compact Cartan subalgebra given by the dimension of the Dirac index of members of the coherent family containing  $X$.   Under a technical condition involving the Springer correspondence, we establish an explicit relationship between this polynomial and the multiplicities of the irreducible components occurring in the associated cycle of $X$. This  relationship  was conjectured in \cite{MehdiPandzicVogan15}.
\end{abstract}

\maketitle



\section*{Introduction} 

Suppose $G_\R$ is a simple real  linear Lie group and $K_\R$ is a maximal compact subgroup of $G_\R$.  Much may be learned about an irreducible representation of $G_\R$ by understanding its restriction to $K_\R$.  Two invariants that depend only on this $K_\R$ structure are the associated cycle (introduced in \cite{Vogan91}) and the Dirac index (defined in \cite{MehdiPandzicVogan15}, when $\rank(G_\R)=\rank(K_\R)$).  These invariants give $W$-harmonic polynomials on the dual of a Cartan subalgebra.  They are both related to the global character theory of representations (for example, \cite{SchmidVilonen00} and \cite[Thm.~6.2]{MehdiPandzicVogan15}).  The two invariants are typically not the same; for a given representation the harmonic polynomials associated to the two invariants often have different degrees.  However, there is a family of representations, those having certain annihilators, for which the degrees do coincide and a relationship between the two invariants is conjectured in \cite[Conjecture 7.2]{MehdiPandzicVogan15}. The main point of this article is to prove this conjecture, which is stated as Theorem A below.

In order to state the theorem we need some notation. Let $\fg$ be the complexification of $\Lie(G_\R)$ and $K$ the complexification of $K_\R$.  Then there is a complex group $G$ (with Lie algebra $\fg$) containing $K$ as the fixed point group of an involution.  The Springer correspondence associates to each nilpotent $G$-orbit in $\fg^*$ an irreducible representation of the Weyl group $W$. Of interest to us is the nilpotent orbit $\cO^\C$ corresponding to the (irreducible) representation of $W$ generated by the `Weyl dimension polynomial' 
$$P_K(\gl)=\prod_{\ga\in\gD_c^+}\frac{\IP{\gl}{\ga}}{\IP{\rho_c}{\ga}}.
$$
The family of Harish-Chandra modules occurring in the conjecture are those having associated variety of annihilator equal to $\bar{\cO^\C}$.  Such Harish-Chandra modules $X$ have associated cycle of the form 
$$  AC(X)=\sum_{i=1}^m m_i(X)\bar{\cO}_i,
$$
where $\cO_1,\dots,\cO_m$ are the `real forms' of $\cO^\C$ in the sense that each $\cO_i$ is a nilpotent $K$-orbit in $(\fg/\fk)^*\subset \fg^*$ so that $G\cdot \cO_i=\cO^\C$  (\cite{Vogan91}).  The `multiplicities' $m_i(X)$ are nonnegative integers.  Placing $X$ in a coherent family allows us to view $m_i(X)$ as a harmonic polynomial on the dual of a Cartan subalgebra.  Now write $DI_p(X)$ for the harmonic polynomial associated to the Dirac index.  Our main result is the following theorem (\cite[Conjecture 7.2]{MehdiPandzicVogan15}).

\smallskip
\noindent{\bf Theorem A.} \emph{
Assume that $\rank(G)=\rank(K)$.  Let $\cO^\C$ and $\cO_1,\dots,\cO_m$ be as above, then there exist integers $c_i$ so that
\begin{equation*}
DI_p(X)=\sum_i c_im_i(X),
\end{equation*}
 for any Harish-Chandra module with $AV(\ann(X))= \bar{\cO^\C}$.
}
\smallskip

The proof of the theorem has several ingredients.  One is an extension of the definition of the associated cycle of Harish-Chandra modules to \emph{virtual} Harish-Chandra modules.  This is done in terms of the $K$-equivariant $\textup{K}$-theory of $\cN_\gt:=\cN\cap (\fg/\fk)^*$, where $\cN$ is the nilpotent cone in $\fg^*$.   Recall that the $\textup{K}$-group $\textup{K}^K(\cN_\gt)$  is the Grothendieck group of the abelian category of $K$-equivariant coherent sheaves on $\cN_\gt$.   Although not expressed in terms of the $K$-equivariant $\textup{K}$-theory, a definition of associated cycle of virtual Harish-Chandra modules is discussed in \cite{Vogan98}.  Much of the relevant sheaf theory appears in \cite{Achar01}.  The  map $\gr$ (with respect to a `good' filtration) is a map from Harish-Chandra modules to finitely generated $(S(\fg),K)$-modules supported in $\cN_\gt$.  Since, for affine varieties, $K$-equivariant coherent sheaves are in 
one-to-one correspondence with finitely generated $(S(\fg),K)$-modules, we may consider
$$ \gr:\{\text{virtual Harish-Chandra modules}\} \to \textup{K}^K(\cN_\gt).$$
The important fact is that we may specify a $\Z$-basis of $\textup{K}^K(\cN_\gt)$ in terms of homogeneous bundles on the (finitely many) $K$-orbits in $\cN_\gt$ and use this basis to express the associated cycle.  Write $\cO=K\cdot e=K/K^e$ and let $\tau\in (K^e)\dual$, where $(K^e)\dual$ is the set of irreducible algebraic representations of $K^e$.  Then the sheaf of sections of the homogeneous vector bundle  $K\underset{K^e}{\times}\tau\to\cO$ extends 
(although not uniquely)
to a  $K$-equivariant coherent sheaf on $\bar{\cO}$, then extends by zero to a $K$-equivariant coherent sheaf  $\tilde{\cE}_\cO(\tau)$ on $\cN_\gt$.  We have the following theorem.

\smallskip
\noindent{\bf Theorem B.} \emph{
$\textup{K}^K(\cN_\gt)$ has $\Z$-basis $\{[\tilde{\cE}_\cO(\tau)]\}$ with $\cO$ running over all $K$-orbits in $\cN_\gt$ and $\tau\in(K^e)\dual$.  For any virtual Harish-Chandra module $X$, writing $\gr(X)=\sum_{\cO}\sum_\tau  n_{\cO,\tau}[\tilde{\cE}_\cO(\tau)]$, we have
$$AC(X)=\sum_{\cO\text{\upshape max'l } }\left( \sum_\tau n_{\cO,\tau}\dim(\tau)\right)\,\bar{\cO}_.
$$
}
\smallskip

\noindent An important point is that these ``leading coefficients'' $n_{\cO,\tau}$ are independent of the choices of extensions $\tilde{\cE}_{\cO'}(\tau')$.

A second ingredient of the proof is a formula for extensions $\tilde{\cE}_\cO(\tau)$.  This is accomplished  using a resolution of singularities of $\bar{\cO}$.    One obtains extensions in terms of virtual Harish-Chandra modules of discrete series representations (Prop.~\ref{prop:ext}).  The conjecture is then proved using a simple formula for the Dirac index of a discrete series representation (Prop.~\ref{prop:di-ds}).

In Sections \ref{sec:const-classical} and \ref{sec:const-exceptional} we list  (1) the groups $G_\R$ for which the hypothesis of Theorem A holds and (2) the complex $G$-orbits $\cO^\C$ corresponding to $P_K(\gl)$ under the Springer correspondence.             
As an example we include the computation of the constants appearing in Theorem A in one classical case; the other cases will appear in \cite{MehdiPandzicVoganZierau17b}.  A table with all constants is included for the exceptional groups; the computations were done using a computer.


\noindent {\bf Notation.}  We are concerned with Harish-Chandra $(\fg,K)$-modules.  These form an abelian category that we denote by $\cM(\fg,K)$.  The corresponding Grothendieck group is denoted by $\bK(\fg,K)$; it is the quotient of the free abelian group on the Harish-Chandra modules by the subgroup generated by all $X-Y+Z$ when there is an exact sequence $0\to X\to Y\to Z\to 0$ in $\cM(\fg,K)$.  When $X\in\cM(\fg,K)$ the corresponding element of $\bK(\fg,K)$ is often denoted by $[X]$.  Similarly, $\cM(S(\fg),K)$ is the category of finitely generated $(S(\fg),K)$-modules, and $\bK(S(\fg),K)$ denotes its Grothendieck group.  For an algebraic group $H$ (over $\C$)  we denote by $\Rep(H)$ the abelian group of virtual $H$-representations, i.e., the Grothendieck group of the category of algebraic representations of $H$. We refer to elements of $\bK(\fg,K)$ (resp., $\Rep(H)$) as virtual Harish-Chandra modules (resp., representations); they are integer combinations of (classes of) irreducibles.  

The nilpotent cone in $\fg^*$ is denoted by $\cN$ and is often identified with the nilpotent cone in $\fg$.  Our group $K$ is the fixed point group of an involution of $G$.  This determines a (complexified) Cartan decomposition $\fg=\fk+\fp$.  We write $\cN_\gt$ for $\cN\cap (\fg/\fk)^*$, which may be identified with the cone of nilpotent elements in $\fp$.

The conjecture is for the cases where  $G$ and $K$ have the same rank; under this assumption we choose a Cartan subalgebra $\fh$ of $\fg$ that is contained in $\fk$ and consider the set $\gD=\gD(\fh,\fg)$ of $\fh$-roots in $\fg$.  The set of $\fh$-roots in $\fk$ (resp., in $\fp$) is denoted by $\gD_c$ (resp., $\gD_n$).  We write $W$ (resp., $W_K$) for the Weyl group of $\gD$ (resp., $\gD_c$).    Half the sum of the roots in some positive root system $\gD^+$  (resp., $\gD_c^+, \gD_n^+$) is denoted  by $\rho$ (resp., $\rho_c,\rho_n$).  When $A$ is some set of roots, we write $\rho(A)$ for half the sum of the roots in $A$.


\section{Associated cycles and equivariant $K$-theory}\label{sec:AC}
Associated varieties and associated cycles of  Harish-Chandra modules were defined in \cite{Vogan91}.   In this section we recall these definitions and  describe how to extend the definitions to the Grothendieck group of Harish-Chandra modules.    Such an extension of the definitions to $\bK(\fg,K)$ is discussed in \cite{Vogan98}; details are given in this section and in the appendix.

First recall the definitions for Harish-Chandra modules. Let $X$ be a Harish-Chandra module.  Filter the enveloping algebra $\Cal U(\fg)$ by degree, so $\gr(\Cal U(\fg))$ is isomorphic to the symmetric algebra $S(\fg)$ which may be identified with the algebra $P(\fg^*)$ of polynomials on $\fg^*$.  Then there are `good filtrations' $\{X_n\}$ (\cite[equation (2.1)]{Vogan91}) of $X$ and for such a filtration, $\gr(X)$ is a finitely generated $S(\fg)$-module.  The associated variety of $X$ is the support of $\gr(X)$, which is the variety in $\fg^*$ defined by the ideal annihilating $\gr(X)$.  Since a good filtration of $X$ is $K$-stable, $\fk$ acts trivially on $\gr(X)$, i.e., $\fk\subset\ann(\gr(X))$.  It follows that $AV(X)$ is a $K$-stable subset of $(\fg/\fk)^*$.  In fact, $AV(X)\subset \cN_\gt$, so is a union of (closures of) some $K$-orbits in $\cN_\gt=\Cal N\cap (\fg/\fk)^*$.  We write 
\begin{equation}\label{eqn:av}
AV(X)=\bar{\cO}_1\cup\dots\cup\bar{\cO}_m,
\end{equation}
where the orbits in the expression are maximal in the sense that they are not contained in the closures of others that appear.  If $X$ is irreducible, then each $\cO_k$ appearing in (\ref{eqn:av}) is a `real form' of a single  $G$-orbit in $\Cal N$, that is, $G\cdot\cO_k=\cO^\C$, for all $k=1,\dots,m$, for some $\cO^\C\subset\Cal N$.  It is a fact that in this situation the associated variety of the annihilator of $X$ is $\bar{\cO^\C}$ (\cite[Thm.~8.4]{Vogan91}).  

The associated cycle of a Harish-Chandra module is a formal integer combination
\begin{equation*}
AC(X)=\sum_{k=1}^m m_k\bar{\cO}_k,
\end{equation*}
where $m_k$, the \emph{multiplicity} of $\bar{\cO}_k$ in $AC(X)$, is the rank of $\gr(X)$ at a generic point in $\bar{\cO}_k$.  A thorough treatment of the definition and these facts is contained in \cite{Vogan91}.

Our proof of Theorem A requires that the definition of the associated cycle be extended to the Grothendieck group $\bK(\fg,K)$.  This is not immediate since the associated cycle is not additive on exact sequences as the following example shows.

\begin{Example} Consider $G_\R=SL(2,\R)$.  There is an exact sequence 
$$0\to\C\to \Ind_{P}^{G_\R}(\C)\to X_+\oplus X_-\to 0
$$
where $X_\pm$ are the Harish-Chandra modules of the two discrete series representations of infinitesimal character $\rho$ and $\Ind_{P}^{G_\R}(\C)$ is the (normalized) principal series representation induced from the trivial representation of the minimal parabolic subgroup $P$.   Then 
$$AC(\C)=1\cdot\{0\}\text{ and } AC(\Ind_{P}^{G_\R}(\C))=AC(X_+\oplus X_-)=1\cdot\bar{\cO}_++ 1\cdot\bar{\cO}_-,
$$
where $\cO_\pm$ are the two one-dimensional $K$-orbits in $\cN_\gt$.  Thus, for example, the associated cycle of $\C$ cannot be recovered from the associated cycles of $\Ind_{P}^{G_\R}(\C)$ and $X_+\oplus X_-$.
\end{Example}

The definition of associated variety and associated cycle on $\bK(\fg,K)$ will be in terms of the $K$-equivariant $\textup{K}$-theory of $\cN_\gt$. 

Suppose for a moment that $K$ is any algebraic group acting on a variety $X$.  Denote by $\textup{K}^K(X)$ the Grothendieck group of the (abelian) category of $K$-equivariant coherent sheaves on $X$.  See, for example, \cite{ChrissGinzburg97} and \cite{Thomason87}  for generalities on $K$-equivariant $\textup{K}$-theory.

An important special case occurs when $K$ acts with just one orbit:  $X=K\cdot x=K/K^x$,  $K^x$ the stabilizer of $x$.  Then it is a fact that the $K$-equivariant coherent sheaves are exactly the sheaves of algebraic sections of the homogeneous vector bundles (e.g., \cite[Lemma 2.1.3]{Achar01}).  Recall that the homogeneous vector bundles are constructed from the representations $(\tau,E_\tau)$ of $K^x$ and are of the form $K\underset{K^x}{\times}E_\tau$, which is the quotient of $K\times E_\tau$ by the equivalence relation given by $(kh,v)\sim(k,\tau(h)v)$, for $k\in K,h\in K^x$ and $v\in E_\tau$.  The space of global sections is 
\begin{align*}
\gG(K\underset{K^x}{\times}E_\tau)&=\{\varphi:K\to E_\tau \,|\, \varphi(kh)=\tau(h^{-1})\varphi(k),  k\in K, h\in K^x\}  
=\Ind_{K^x}^K(\tau).
\end{align*}
Since an exact sequence of $K^x$-representations gives an exact sequence of sheaves of local sections of homogeneous vector bundles, we obtain a homomorphism  $\Rep(K^x)\to \textup{K}^K(K/K^x)$.  This map in fact gives an isomorphism:
\begin{equation*}\textup{K}^K(K/K^x)\simeq \Rep(K^x).
\end{equation*}

Returning to an arbitrary action of a group $K$, we conclude that if $\cO=K\cdot x$ is an orbit of $K$ on $X$ and $\cE_\cO(\tau)$ denotes the sheaf of sections of $K\underset{K^x}{\times}\tau\to\cO$, then 
\begin{equation}\label{eqn:vb}
\{[\cE_\cO(\tau)]:\tau\in(K^x)^{\dual}\}
\end{equation}
is a $\Z$-basis of  $\textup{K}^K(\cO).$
One may also see that if $K$ acts on a vector space $V$, then 
\begin{equation*}
\textup{K}^K(V)\simeq \Rep(K).
\end{equation*}
This isomorphism is given by associating to $(\gs,E_\gs)\in K^{\dual}$, the sheaf of sections of the trivial bundle $V\times E_\gs$, that is, $\cO(V)\otimes E_\gs$.

Now suppose that $Z\subset X$ is $K$-stable and $i:Z\hookrightarrow X$ is a closed embedding.  Let $U:=X\smallsetminus Z$ and $j:U\hookrightarrow X$.  Then there is an  important exact sequence
\begin{equation}\label{eqn:seq-2}
\cdots\to \textup{K}_1^K(U)\to \textup{K}^K(Z)\overset{i_*}{\to} \textup{K}^K(X)\overset{j^*}{\to}\textup{K}^K(U)\to 0,
\end{equation}
involving higher $\textup{K}$-groups.
The map $i_*$ is extension by zero and $j^*$ is the restriction from $X$ to $U$.   The surjectivity of $j^*$ is essentially the fact that any equivariant coherent sheaf on an open set extends to an equivariant coherent sheaf on the closure of the set.  The exactness at $\textup{K}^K(X)$ is roughly the statement that two coherent sheaves with the same restrictions to $U$ have difference supported in $Z=X\smallsetminus U$.  See \cite[Ch. II, ex. 6.10]{Hartshorne77} for the nonequivariant case and \cite[Theorem 2.7]{Thomason87} for the equivariant case.  The full exact sequence, along with the definition of the higher $\textup{K}$-groups, is in \cite{Quillen73}. 

Now assume that $K$ has a finite number of orbits on $X$.  List these orbits as $\cO_1,\dots,\cO_m$.   We construct a $\Z$-basis of $\textup{K}^K(X)$ as follows.  For each $\cO_k=K\cdot x_k$ and $\tau_{kl}\in(K^{x_k})\dual$, let $\cE_{\cO_k}(\tau_{kl})$ be the sheaf of local sections of  $K\underset{K^{x_k}}{\times}\tau_{kl}$.  This $K$-equivariant sheaf on $\cO_k$ has an extension to a coherent $K$-equivariant sheaf on $\bar{\cO}_k$.   Extending this sheaf from $\bar{\cO}_k$ to $X$ (by zero), we obtain a  coherent $K$-equivariant sheaf $\tilde{\cE}_{\cO_k}(\tau_{kl})$ on $X$ extending the vector bundle $\cE_{\cO_k}(\tau_{kl})$.  This is the surjectivity of $j^*$ (more precisely,  it is the fact that the sheaf extends to an actual sheaf).  Note that this extension is not necessarily unique.  

One easily shows, by induction on the number of orbits,  that 
\begin{equation}\label{eqn:basis}
\{[\tilde{\cE}_{\cO_k}(\tau_{kl})]\,:\, k=1,\dots,m,\,\tau_{kl}\in(K^{x_k})\dual\}
\end{equation}
spans $\textup{K}^K(X)$ as a $\Z$-module.

In fact we have the the following.
\begin{Thm}\label{thm:basis}
$\{[\tilde{\cE}_{\cO_k}(\tau_{kl})]\,:\, k=1,\dots,m,\tau_{kl}\in(K^{x_k})\dual\}$
 is a $\Z$-basis of $\textup{K}^K(X)$.
\end{Thm}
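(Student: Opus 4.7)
The plan is to argue by induction on the number $m$ of orbits, the base case $m=1$ reducing to the identification $\textup{K}^K(K/K^x)\simeq\Rep(K^x)$ recorded in (\ref{eqn:vb}). Since the spanning statement (\ref{eqn:basis}) has already been established, only linear independence remains.

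Order the orbits so that $\cO_m$ is open in $X$ and set $Z:=X\smallsetminus\cO_m$, a closed $K$-stable subvariety containing the remaining $m-1$ orbits. Suppose that
\[
\sum_{k=1}^{m}\sum_{l} a_{kl}[\tilde\cE_{\cO_k}(\tau_{kl})]=0
\]
in $\textup{K}^K(X)$. Applying the restriction $j^*:\textup{K}^K(X)\to\textup{K}^K(\cO_m)$ from the exact sequence (\ref{eqn:seq-2}), each $\tilde\cE_{\cO_k}(\tau_{kl})$ with $k<m$ is supported in $Z$ and so is annihilated by $j^*$, while by construction $j^*[\tilde\cE_{\cO_m}(\tau_{ml})]=[\cE_{\cO_m}(\tau_{ml})]$, and these form the $\Z$-basis of $\textup{K}^K(\cO_m)\simeq\Rep(K^{x_m})$ described in (\ref{eqn:vb}). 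Hence $a_{ml}=0$ for every $l$, and the residual relation lies in $\ker(j^*)=\mathrm{image}(i_*)$. The inductive hypothesis applied in $\textup{K}^K(Z)$ identifies this residual relation with the image under $i_*$ of a unique $\Z$-linear combination of basis elements; consequently, the remaining $a_{kl}$ vanish provided $i_*:\textup{K}^K(Z)\to\textup{K}^K(X)$ is injective.

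The main obstacle is therefore proving injectivity of $i_*$, equivalently the vanishing of the connecting map $\partial:\textup{K}_1^K(\cO_m)\to\textup{K}^K(Z)$ in the exact sequence (\ref{eqn:seq-2}). The plan for this is to exploit the flexibility in the choice of extensions: one constructs $\tilde\cE_{\cO_m}(\tau)$ functorially in $\tau$, for instance as a derived pushforward $R\pi_*$ along a $K$-equivariant resolution of singularities $\pi:\tilde X\to X$ that is an isomorphism over $\cO_m$. Such a functorial choice yields a group homomorphism $\Rep(K^{x_m})\to\textup{K}^K(X)$ splitting $j^*$, which forces the connecting map $\partial$ to vanish on the image of the basis of $\textup{K}_1^K(\cO_m)$, hence on all of it. Finally, to conclude that the basis property holds for an arbitrary choice of extensions (not only the functorial one), observe that any two extensions of the same $\cE_{\cO_m}(\tau)$ differ by a class supported on $Z$; such a class already lies in the span of the inductively established basis for $\textup{K}^K(Z)$, so substituting one extension for another preserves the basis property and completes the induction.
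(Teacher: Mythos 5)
Your overall strategy -- induction on the number of orbits, restricting to the open orbit $\cO_m$ to kill its coefficients, and then reducing everything to the injectivity of $i_*:\textup{K}^K(Z)\to\textup{K}^K(X)$ -- is exactly the skeleton of the paper's argument (given in the appendix), and your final remark that changing extensions only alters basis elements by classes pushed forward from $Z$, hence preserves the basis property, is fine. The genuine gap is in your treatment of the one step that actually requires work: the vanishing of the connecting map $\partial:\textup{K}_1^K(\cO_m)\to\textup{K}^K(Z)$. You claim that a functorial extension construction gives a homomorphism $\Rep(K^{x_m})\to\textup{K}^K(X)$ splitting $j^*$, and that this ``forces $\partial$ to vanish.'' That implication is false as a matter of homological algebra: exactness gives $\ker(i_*)=\mathrm{image}(\partial)$ and $\ker(\partial)=\mathrm{image}\bigl(j^*:\textup{K}_1^K(X)\to\textup{K}_1^K(\cO_m)\bigr)$, so $\partial=0$ is equivalent to surjectivity of $j^*$ on $\textup{K}_1$, which a section of $j^*$ on $\textup{K}_0$ does not address at all; a splitting of the surjection $\textup{K}^K(X)\to\textup{K}^K(\cO_m)$ splits off the quotient but says nothing about whether $i_*$ kills part of $\textup{K}^K(Z)$. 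A toy example: in the exact sequence $\Z\xrightarrow{\times 2}\Z\to(\Z/2)\oplus\Z\to\Z\to 0$ the last map splits while the first (playing the role of $\partial$) is nonzero. There is also a secondary problem: the proposed functorial splitting via a resolution $R\pi_*$ is tailored to nilpotent orbit closures (it needs the Kostant--Rallis resolution and the surjectivity of $\Rep(K\cap\bar{Q})\to\Rep(K^{x})$, which the paper proves only later), whereas the theorem is asserted for an arbitrary variety with finitely many $K$-orbits, so even if the logic were repaired the construction would not cover the stated generality.

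What is actually needed, and what the paper does, is to show that \emph{every} homomorphism $\textup{K}_1^K(\cO_m)\to\textup{K}^K(Z)$ is zero. By Quillen's D\'evissage one may replace the category of $K$-equivariant sheaves on $\cO_m$, i.e.\ algebraic representations of $K^{x_m}$, by its subcategory of semisimple representations, giving $\textup{K}_1^K(\cO_m)\simeq\bigoplus_{\tau\in(K^{x_m})\dual}\textup{K}_1(\C)\simeq\bigoplus\C^\times$, a divisible group. By the inductive hypothesis (the basis statement for $Z$, which has fewer orbits) $\textup{K}^K(Z)$ is a free abelian group, and a divisible group admits no nonzero homomorphism to a free abelian group; hence $\partial=0$ and $i_*$ is injective. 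Note also that the paper proves injectivity of $i_*$ for an arbitrary closed $K$-stable $Z$ (not just $X\smallsetminus\cO_m$) by factoring $Z\hookrightarrow X\smallsetminus\cO_m\hookrightarrow X$ and combining the claim above with induction; this stronger statement is what later makes the leading coefficients independent of the chosen extensions. Your proof as written does not supply any of this, so the independence step is not established.
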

A version of this is stated in \cite[Lecture 7]{Vogan98} in somewhat different language.  We give the proof in the appendix.

Now return to our pair $(G,K)$ of simple group and fixed points of an involution.  Then the action of $K$ on $\cN_\gt$ has a finite number of orbits $\cO_k=K\cdot x_k,k=1,\dots,m$, and the theorem  gives a $\Z$-basis of $\textup{K}^K(\cN_\gt)$.

We may now give a definition of associated variety and associated cycle of a virtual Harish-Chandra $(\fg,K)$-module as follows.  Consider 
$$
\gr:\cM(\fg,K)\to \cM(S(\fg),K;\cN_\gt),
$$ the target being those finitely generated $(S(\fg),K)$-modules supported in $\cN_\gt$.  This gives a map $\cM(\fg,K)\to \bK(S(\fg),K;\cN_\gt)$ which can be shown to be additive on exact sequences (and independent of good filtration), and thus defines a homomorphism
\begin{equation*}
\bK(\fg,K)\to \bK(S(\fg),K;\cN_\gt).
\end{equation*}
In fact this map is surjective and its kernel is the subgroup of virtual modules having each $K$-type occurring with multiplicity zero  (\cite[Lecture 7]{Vogan98}).  Coherent sheaves on an affine variety are precisely the finitely generated modules over the coordinate ring (\cite[Ch.~II, \S5]{Hartshorne77}).  Taking into account the $K$-actions we get $\bK(S(\fg),K;\cN_\gt)\simeq \textup{K}^K(\cN_\gt)$, giving a surjection 
\begin{equation}\label{eqn:surj}
\GR:\bK(\fg,K)\to \textup{K}^K(\cN_\gt).
\end{equation}

Now fix a basis $\{[\tilde{\cE}_{\cO_k}(\tau_{kl})]: k=1,\dots,m \text{ and }\tau_{kl}\in (K^{x_k})\dual\}$ of $\textup{K}^K(\cN_\gt)$ as in Theorem \ref{thm:basis}.
For $X\in\bK(\fg,K)$, write 
\begin{equation}\label{eqn:expr}
\GR(X)=\sum_{k,l}n_{kl}[\tilde{\cE}_{\cO_k}(\tau_{kl})].
\end{equation}
\begin{Def}  The \emph{associated variety} and \emph{associated cycle} of $X\in\bK(\fg,K)$ are defined by:
\begin{align*}
&AV(X):=\bigcup_{\cO_k\text{ max'l}}\bar{\cO}_k  \\
&AC(X):=\sum_{\cO_k\text{ max'l}}\left(\sum_j n_{kl}\dim(\tau_{kl})\right)\bar{\cO}_k  
\end{align*}
By `$\cO_k$ maximal' we mean that $\cO_k$ occurs in (\ref{eqn:expr}) with coefficient $n_{kl}\neq0$, for some $l$, and $\cO_k$ is not in the closure of another orbit that occurs in (\ref{eqn:expr}) (with a nonzero coefficient).
\end{Def}
The fact that $AV(X)$ and $AC(X)$ are independent of the extensions used to define each $\tilde{\cE}_{\cO_k}(\tau_{kl})$ is contained in Corollary \ref{cor:lt} of the appendix.  When $X$ is a Harish-Chandra module, this definition coincides with the original definition of \cite{Vogan91}.  Note that the multiplicity of $\tilde{\cE}_{\cO_k}(\tau_{kl})$  along $\cO_k$ coincides with that of $\cE_{\cO_k}(\tau_{kl})$, which is the dimension of $\tau_{kl}$.


\section{Dirac Index}
The definition and numerous properties of the Dirac index of a Harish-Chandra module is contained in \cite{MehdiPandzicVogan15}.  The Dirac index is related to the Dirac cohomology, but has some additional nice properties.  For example, it is defined on virtual Harish-Chandra modules (i.e., on the Grothendieck group) and it is well-behaved with respect to tensoring by finite dimensional $\cU(\fg)$-modules.  The Dirac index (as well as the Dirac cohomology) is a virtual $\widetilde{K}$-module\footnote{$\widetilde{K}$ is the pullback of the double cover $\pi:\Spin (\fp)\to SO(\fp)$ by the adjoint representation of $K$, that is $\widetilde{K}=\{(k,s)\in K\times \Spin (\fp) : \Ad(k)=\pi(s)\}$.}. Here we review the definition and the most relevant properties.  

Assume that $\rank(G)=\rank(K)$ and fix a Cartan subalgebra $\fh$ of $\fg$ contained in $\fk$.  In this case $\fp$ is even dimensional, so the spin representation of $\Spin (\fp)$ decomposes into the direct sum of two subrepresentations $S^\pm$.  Recall that $S$ may be constructed by first choosing a maximal isotropic subspace $V\subset \fp$.  Then the Clifford algebra $C(\fp)$ acts on $\wedge(V)$.  Under this action the odd part of $C(\fp)$ sends $\wedge^{\even} (V)$ to $\wedge^{\odd} (V)$, so the even part preserves both $\wedge^{\even} (V)$ and $\wedge^{\odd} (V)$.  Since $\f{so}(\fp)\subset C^{\even} (\fp)$, the  spin representation of $\widetilde{K}$ decomposes into the sum of $S^+:=\wedge^{\even} (V)$ and $S^-:=\wedge^{\odd} (V)$.

Typically one makes a choice of maximal isotropic subspace as follows.  First choose a system of positive roots $\gD^+\subset \gD(\fh,\fg)$ and let $\gD_n^+=\gD^+\cap\gD(\fp)$.  Then 
\begin{equation}\label{eqn:maxiso}
V=\fn_\fp=\sum_{\gb\in\gD_n^+}\fg^{(\gb)},
\end{equation}
the sum of the positive root spaces in $\fp$, is a maximal isotropic subspace of $\fp$.  In this case the $\fh$-weights of $S^\pm$ are
\begin{equation}\label{eqn:wts}
\begin{split}
  &\gD(S^+)=\{\sum_{\gb\in I}\gb -\rho_n : I\subset\gD_n^+, \#I\text{ is even}\} \\
  &\gD(S^-)=\{\sum_{\gb\in I}\gb -\rho_n : I\subset\gD_n^+, \#I\text{ is odd}\}. 
\end{split}
\end{equation}

For a Harish-Chandra module $X$, the Dirac operator 
$$D:X\otimes S\to X\otimes S
$$
is defined by 
$$D=\sum_i \xi_i\otimes c(\xi_i),$$ 
where $\{\xi_i\}$ is any orthonormal basis of $\fp$ and $c(\xi_i)$ is clifford multiplication by $\xi_i$.  The Dirac cohomology is 
$$H_D(X)=\Ker(D)/\Ker(D)\cap\rm{Im}(D),$$
a finite dimensional $\widetilde{K}$-representation. 

For $\xi\in \fp$, $c(\xi)$ sends $S^{\pm}$ to $S^{\mp}$, so (under our equal rank assumption) we also have the $\widetilde{K}$-maps
$$D^{\pm}: X\otimes S^{\pm}\to X\otimes S^{\mp}.$$
One immediately has 
$$H_D(X)=\Ker(D^+)/\Ker(D^+)\cap\rm{Im}(D^-)\,\oplus \,\Ker(D^-)/\Ker(D^-)\cap\rm{Im}(D^+).
$$
\begin{Def}  If $X$ is a Harish-Chandra module, then the \emph{Dirac index} of $X$ is 
$$DI_v(X):=\Ker(D^+)/\Ker(D^+)\cap\rm{Im}(D^-) - \Ker(D^-)/\Ker(D^-)\cap\rm{Im}(D^+),
$$
a virtual $\widetilde{K}$-representation.
\end{Def}

We now list some key properties of the Dirac index.  Let $X$ be a Harish-Chandra $(\fg,K)$-module.

\begin{Prop}\upshape{(\cite{HuangPandzic02})} \label{prop:11}
If $X$ has infinitesimal character of Harish-Chandra parameter $\gl$, then each irreducible constituent of $DI_v(X)$ has infinitesimal character of the form $w(\gl)$, for some $w\in W$.
\end{Prop}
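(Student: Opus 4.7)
\smallskip
\noindent\textbf{Proof proposal.}
Since $D$ is $\widetilde K$-equivariant, $\Ker(D^\pm)$ and $\mathrm{Im}(D^\pm)$ are $\widetilde K$-submodules of $X\otimes S^\pm$, so $DI_v(X)$ is a genuine virtual $\widetilde K$-representation; the claim therefore reduces to showing that every irreducible $\widetilde K$-constituent of the Dirac cohomology $H_D(X)$ has $\fk$-infinitesimal character $W$-conjugate to $\gl$. My plan is to combine the Parthasarathy identity for $D^2$ with the algebraic form of Vogan's conjecture proved by Huang--Pand\v zi\'c.

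The first ingredient is Parthasarathy's computation inside $\cU(\fg)\otimes C(\fp)$:
$$
D^2 \;=\; -(\mathrm{Cas}_\fg\otimes 1) \;+\; \mathrm{Cas}_{\fk_\gD} \;+\; \bigl(\IP{\rho_c}{\rho_c}-\IP{\rho}{\rho}\bigr),
$$
where $\fk_\gD\subset \cU(\fg)\otimes C(\fp)$ is the diagonal copy $Z\mapsto Z\otimes 1+1\otimes\alpha(Z)$ coming from the spin map $\alpha\colon\fk\to C(\fp)$. If $\mu$ is the highest weight of an irreducible $\widetilde K$-constituent of $H_D(X)$, then applying $D^2$ to any $\Ker(D)$-representative of this constituent (modulo $\Ker(D)\cap\mathrm{Im}(D)$) gives
$$
\IP{\gl}{\gl}\;=\;\IP{\mu+\rho_c}{\mu+\rho_c},
$$
since $\mathrm{Cas}_\fg$ acts on $X$ by $\IP{\gl}{\gl}-\IP{\rho}{\rho}$ and $\mathrm{Cas}_{\fk_\gD}$ acts on the $\widetilde K$-type by $\IP{\mu+\rho_c}{\mu+\rho_c}-\IP{\rho_c}{\rho_c}$. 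By itself this only yields the weaker statement that $\mu+\rho_c$ and $\gl$ have equal norm.

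To upgrade equality of norms to $W$-conjugacy I would follow the Huang--Pand\v zi\'c strategy: construct, for each $z\in Z(\fg)$, an element $\zeta(z)\in Z(\fk_\gD)$ with
$$
z\otimes 1-\zeta(z)\;\in\; D\bigl(\cU(\fg)\otimes C(\fp)\bigr)+\bigl(\cU(\fg)\otimes C(\fp)\bigr)D,
$$
and show that the resulting algebra map $\zeta\colon Z(\fg)\to Z(\fk_\gD)$, composed with the two Harish-Chandra isomorphisms $Z(\fg)\simeq S(\fh)^W$ and $Z(\fk_\gD)\simeq S(\fh)^{W_K}$, coincides with the natural inclusion $S(\fh)^W\hookrightarrow S(\fh)^{W_K}$. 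The existence of $\zeta$ would come from a careful filtration of $\cU(\fg)\otimes C(\fp)$ and an analysis of the induced Koszul-type differential on symbols, while the Harish-Chandra identification can be verified on a set of generators. Once $\zeta$ is in hand, every $z\in Z(\fg)$ acting on $X$ by the scalar $\chi_\gl(z)$ must act on any $\widetilde K$-type $E_\mu\subset H_D(X)$ by $\chi_{\mu+\rho_c}(\zeta(z))$; this forces $\chi_\gl=\chi_{\mu+\rho_c}\circ\zeta$, and hence $\mu+\rho_c\in W\cdot\gl$. The main obstacle is the construction and Harish-Chandra identification of $\zeta$; once this is granted, the proposition follows for $H_D(X)$ and, by taking the $\Z/2$-graded virtual difference, for $DI_v(X)$.
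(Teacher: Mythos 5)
Your proposal is correct, but note the paper offers no proof of this proposition at all: it simply cites \cite{HuangPandzic02}, and what you sketch (Parthasarathy's formula for $D^2$ plus the construction of $\zeta\colon Z(\fg)\to Z(\fk_\gD)$ and its identification with the inclusion $S(\fh)^W\hookrightarrow S(\fh)^{W_K}$, then passing from $H_D(X)$ to the virtual difference $DI_v(X)$) is precisely the Huang--Pand\v zi\'c argument being cited. So your "main obstacle" is exactly the main theorem of that reference, and your write-up is a faithful reduction to it rather than a different route.
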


\begin{Prop}\upshape{(\cite[Prop.~3.2]{MehdiPandzicVogan15})} \label{prop:22}
$DI_v(X)=X\otimes S^+-X\otimes S^-$.  Therefore, the Dirac index depends only on the $K$-types of $X$.
\end{Prop}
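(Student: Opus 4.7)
The plan is to compute both sides of the claimed identity in the Grothendieck group of finite-dimensional virtual $\widetilde{K}$-modules by decomposing $X\otimes S$ into $\widetilde{K}$-isotypic components and arguing isotypic component by isotypic component. Since $X$ is a Harish-Chandra module, it is $K$-admissible, so each isotypic component $(X\otimes S)_\sigma$ for $\sigma\in\widetilde{K}\dual$ is finite dimensional. The operator $D$ commutes with the diagonal action of $\widetilde{K}$ on $X\otimes S$ (this is a standard property of the Dirac operator), so $D^\pm$ restrict to $\widetilde{K}$-equivariant linear maps between the finite-dimensional spaces $(X\otimes S^+)_\sigma$ and $(X\otimes S^-)_\sigma$, and the cohomology of $D$ on the $\sigma$-component is the $\sigma$-component of $H_D(X)$.

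The key input is Parthasarathy's formula, which expresses $D^2$ in terms of the Casimir of $\fg$ and the Casimir of the diagonal copy of $\widetilde{\fk}$ inside $\cU(\fg)\otimes C(\fp)$. Since $X$ has a fixed infinitesimal character and $\widetilde{\fk}_\Delta$ acts by the Casimir scalar of $\sigma$ on $(X\otimes S)_\sigma$, the operator $D^2$ acts by a single scalar $c_\sigma\in\C$ on that isotypic component. Now I would split into two cases.

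If $c_\sigma\neq 0$, then $D$ is invertible on $(X\otimes S)_\sigma$; in particular $D^+$ and $D^-$ restrict to mutually inverse (up to scaling) isomorphisms between $(X\otimes S^+)_\sigma$ and $(X\otimes S^-)_\sigma$. Hence both $\Ker(D^+)/(\Ker(D^+)\cap\text{Im}(D^-))$ and $\Ker(D^-)/(\Ker(D^-)\cap\text{Im}(D^+))$ vanish on this component, and simultaneously $(X\otimes S^+)_\sigma$ and $(X\otimes S^-)_\sigma$ are isomorphic as $\widetilde{K}$-modules, so both sides of the asserted identity contribute zero on the $\sigma$-component. If $c_\sigma=0$, then $D^2=0$ on $(X\otimes S)_\sigma$, which means $\text{Im}(D^-)\subseteq \Ker(D^+)$ and $\text{Im}(D^+)\subseteq \Ker(D^-)$. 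A rank-nullity count in the finite-dimensional two-term complex then yields
\begin{align*}
DI_v(X)_\sigma
&=\bigl(\Ker(D^+)-\text{Im}(D^-)\bigr)-\bigl(\Ker(D^-)-\text{Im}(D^+)\bigr)\\
&=\bigl(\Ker(D^+)+\text{Im}(D^+)\bigr)-\bigl(\Ker(D^-)+\text{Im}(D^-)\bigr)\\
&=(X\otimes S^+)_\sigma-(X\otimes S^-)_\sigma
\end{align*}
in $\Rep(\widetilde{K})$, using that the maps are $\widetilde{K}$-equivariant so the short exact sequences split as $\widetilde{K}$-modules.

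Summing these contributions over all $\sigma\in\widetilde{K}\dual$ gives the desired equality $DI_v(X)=X\otimes S^+-X\otimes S^-$ in the Grothendieck group of virtual $\widetilde{K}$-modules. The ``therefore'' clause is then immediate: the right-hand side depends only on the underlying $K$-module structure of $X$ (since $S^\pm$ are fixed $\widetilde{K}$-modules and the tensor product is with respect to the diagonal $\widetilde{K}$-action inherited from $K$), so $DI_v(X)$ does as well. The only subtle point I anticipate is justifying that the case-by-case identities assemble into an honest equality of virtual $\widetilde{K}$-modules rather than merely of characters; but this follows from the $\widetilde{K}$-equivariance of $D^\pm$ together with complete reducibility of the finite-dimensional $\widetilde{K}$-modules involved, so the rank-nullity identity lifts from dimensions to the Grothendieck group.
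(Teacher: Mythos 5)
Your argument is the standard one, and it is correct \emph{under the extra hypothesis you quietly insert}: ``since $X$ has a fixed infinitesimal character.'' But the proposition carries no such hypothesis, and the generality matters: Proposition \ref{prop:33} (additivity on exact sequences) is deduced from this statement, and the middle term of an extension need not admit an infinitesimal character even when the outer terms do; a Jordan--H\"older reduction would be circular, since additivity of the index is a consequence of this proposition, not an available input. For a general finite-length $X$ the center of $\cU(\fg)$ acts only locally finitely, so after splitting $X$ into summands with \emph{generalized} infinitesimal character, Parthasarathy's formula gives that $D^2$ acts on a $\widetilde{K}$-isotypic component $(X\otimes S)_\sigma$ by a scalar \emph{plus a nilpotent operator}, not by a scalar. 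Your dichotomy then becomes ``invertible'' versus ``nilpotent,'' and nilpotency of $D^2$ is genuinely not enough for the Euler-characteristic identity you invoke: take $V^+=\C e_1\oplus\C e_2$, $V^-=\C f$, with $De_1=f$, $Df=e_2$, $De_2=0$. Then $D$ is odd and $D^2$ is nilpotent, $\mathrm{Ker}(D^+)=\C e_2=\mathrm{Im}(D^-)$ and $\mathrm{Ker}(D^-)=0$, so the index is $0$, while $\dim V^+-\dim V^-=1$.

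The precise content that is missing can be isolated by rank--nullity: for any odd $D$ on a finite-dimensional graded space, the index as defined equals $\dim V^+-\dim V^-+\mathrm{rank}(D^2|_{V^-})-\mathrm{rank}(D^2|_{V^+})$. So when the scalar part $c_\sigma$ vanishes, what must be proved is that $\mathrm{rank}\bigl(D^2|_{(X\otimes S^+)_\sigma}\bigr)=\mathrm{rank}\bigl(D^2|_{(X\otimes S^-)_\sigma}\bigr)$, and this does not follow merely from the fact that $D^2$ restricts there to $-(\Omega_\fg-\chi(\Omega_\fg))\otimes 1$ with nilpotent $\Omega_\fg-\chi(\Omega_\fg)$; the toy example above shows the desired equality can fail for an abstract odd operator with nilpotent square. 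In short: your proof establishes the proposition for modules with an honest infinitesimal character (where $D^2$ is an actual scalar on each isotypic component, and your two cases do cover everything), but for the statement as given --- and as used later --- the case of merely generalized infinitesimal character requires an additional argument, which is exactly the point the cited proof in \cite{MehdiPandzicVogan15} has to address.
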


\begin{Prop}\upshape{(\cite[Cor.~3.3]{MehdiPandzicVogan15})} \label{prop:33}
The Dirac index is additive on exact sequences, so is well-defined on virtual modules, giving a homomorphism
$$DI_v:\bK(\fg,K)\to \Rep(\widetilde{K}).
$$
\end{Prop}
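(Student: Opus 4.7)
The plan is to deduce the proposition as a direct formal consequence of the formula
$$DI_v(X)=X\otimes S^+ - X\otimes S^-$$
provided by Proposition \ref{prop:22}. The essential point is that tensoring a Harish-Chandra module against a finite-dimensional vector space (here $S^\pm$) is an exact functor, so the two individual summands already behave additively on short exact sequences at the level of $\widetilde K$-isotypic multiplicities, and the Dirac index is simply their formal difference.

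In detail, suppose $0\to X\to Y\to Z\to 0$ is a short exact sequence in $\cM(\fg,K)$. Since $\dim S^\pm<\infty$, the functors $\,\cdot\,\otimes S^\pm$ are exact, producing short exact sequences of $\widetilde K$-modules
$$0\to X\otimes S^\pm\to Y\otimes S^\pm\to Z\otimes S^\pm\to 0.$$
For each irreducible $\sigma\in(\widetilde K)^{\dual}$, the $\sigma$-isotypic components of these admissible $\widetilde K$-modules are finite-dimensional, so the multiplicity functor $m_\sigma=\dim\Hom_{\widetilde K}(\sigma,\,\cdot\,)$ is exact on them. Consequently
$$m_\sigma(Y\otimes S^\pm)=m_\sigma(X\otimes S^\pm)+m_\sigma(Z\otimes S^\pm).$$
Subtracting the equation for $S^-$ from that for $S^+$ and applying Proposition \ref{prop:22} gives $m_\sigma(DI_v(Y))=m_\sigma(DI_v(X))+m_\sigma(DI_v(Z))$ for every $\sigma$. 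Because each $DI_v$ is a genuine finite-dimensional virtual $\widetilde K$-representation, matching $\sigma$-multiplicities amounts to the identity $DI_v(Y)=DI_v(X)+DI_v(Z)$ in $\Rep(\widetilde K)$.

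Additivity on short exact sequences is exactly the universal property characterizing $\bK(\fg,K)$, so $DI_v$ descends to a well-defined group homomorphism $\bK(\fg,K)\to\Rep(\widetilde K)$. The main delicate step, which I expect to be the only one requiring care, is the interpretation of $X\otimes S^\pm$ as objects supporting a sensible multiplicity calculus: neither lies in $\Rep(\widetilde K)$ individually, but their formal difference does by Proposition \ref{prop:22} (or equivalently by the Fredholm nature of $D^+$), and the identities above are meaningful once one works one $\widetilde K$-isotypic component at a time, where admissibility guarantees finite dimensionality.
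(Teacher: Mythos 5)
Your argument is correct and is essentially the standard one: the paper itself gives no proof (it simply cites \cite[Cor.~3.3]{MehdiPandzicVogan15}), and the proof there is exactly this deduction from $DI_v(X)=X\otimes S^+-X\otimes S^-$ via exactness of tensoring with the finite-dimensional spaces $S^\pm$ and admissibility, with the only delicate point being the one you flag, namely that the difference of the two (infinite-dimensional but admissible) $\widetilde K$-modules is a genuine element of $\Rep(\widetilde K)$ by Proposition \ref{prop:22}.
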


\begin{Prop}\upshape{(\cite[Prop.~5.2]{MehdiPandzicVogan15})} \label{prop:44}
 Let $\gL$ be the weight lattice in $\fh^*$ and suppose that $\{X(\gl)\}_{\gl\in\gl_0+\gL}$ is a coherent family of virtual Harish-Chandra modules, then $\dim(DI_v(X(\gl)))$ is a $W$-harmonic polynomial in $\gl$.
\end{Prop}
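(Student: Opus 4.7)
The plan is to combine Propositions \ref{prop:11} and \ref{prop:22} with the defining polynomial-multiplicity property of a coherent family to show $\dim DI_v(X(\lambda))$ is polynomial, and then to upgrade polynomiality to $W$-harmonicity by a character-theoretic computation.

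First, Proposition \ref{prop:11} constrains each irreducible constituent of $DI_v(X(\lambda))$ to be a $\widetilde{K}$-type whose highest weight lies in the finite set $\{w\lambda-\rho_c:[w]\in W_K\backslash W\}$ (the $W_K$-action merely permutes highest weights of the same $\widetilde{K}$-type). Hence
\begin{equation*}
DI_v(X(\lambda))=\sum_{[w]\in W_K\backslash W}m_w(\lambda)\,V^{\widetilde{K}}_{w\lambda-\rho_c},
\end{equation*}
a finite sum whose indexing set is independent of $\lambda$.

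Next, I would show each $m_w(\lambda)$ is polynomial in $\lambda$. Proposition \ref{prop:22} identifies $m_w(\lambda)$ with the multiplicity of $V^{\widetilde{K}}_{w\lambda-\rho_c}$ in $X(\lambda)\otimes S^+ - X(\lambda)\otimes S^-$; using the explicit weight lists (\ref{eqn:wts}) for $S^\pm$, a Brauer-type expansion writes $m_w(\lambda)$ as a finite signed sum of $K$-multiplicities $[X(\lambda):\sigma]$. By the defining property of a coherent family, each such $K$-multiplicity is polynomial in $\lambda$, hence so is $m_w(\lambda)$. Combined with the Weyl dimension formula $\dim V^{\widetilde{K}}_{w\lambda-\rho_c}=P_K(w\lambda)$, we get
\begin{equation*}
\dim DI_v(X(\lambda))=\sum_{[w]\in W_K\backslash W}m_w(\lambda)\,P_K(w\lambda),
\end{equation*}
manifestly a polynomial in $\lambda$.

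The main obstacle is to improve polynomial to $W$-harmonic. My approach is character-theoretic. On the compact torus, the character of the virtual $\widetilde{K}$-module $DI_v(X(\lambda))$ equals
\begin{equation*}
\Theta_{X(\lambda)}\cdot\bigl(\chi_{S^+}-\chi_{S^-}\bigr)=\Theta_{X(\lambda)}\cdot\prod_{\beta\in\Delta_n^+}\bigl(e^{\beta/2}-e^{-\beta/2}\bigr).
\end{equation*}
Combining this $W$-alternating noncompact factor with the $W_K$-alternating Weyl denominator for $\fk$ implicit in extracting dimension from character via the Weyl denominator/dimension formula reproduces the full Weyl denominator for $\fg$; the output repackages $\dim DI_v(X(\lambda))$ as a $W$-alternating polynomial in $\lambda$ divided by the discriminant $\prod_{\alpha\in\Delta^+}\langle\lambda,\alpha\rangle$. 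By the Chevalley -- Shephard -- Todd description of $\C[\fh^*]$ as invariants tensor harmonics, such a quotient lies in the space of $W$-harmonic polynomials. The technical challenge is to carry out this character bookkeeping rigorously, especially at singular $\lambda$ where individual summands acquire poles that must cancel in the total.
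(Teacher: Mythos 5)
The decisive gap is in your second step. The claim that the multiplicities $m_w(\gl)$ are polynomial ``by the defining property of a coherent family'' has no basis: the defining property concerns infinitesimal characters and tensoring with finite-dimensional $\fg$-modules, and it does not make $K$-multiplicities polynomial in $\gl$ --- already for the coherent family of discrete series of $SL(2,\R)$ the multiplicity of a fixed $K$-type is a step function of the parameter, and your Brauer-type expansion expresses $m_w(\gl)$ through exactly such multiplicities (at $\gl$-shifted $K$-types), about which the coherence axiom says nothing directly. Moreover, even if each $m_w(\gl)$ were polynomial, $\sum_w m_w(\gl)P_K(w\gl)$ would not be $W$-harmonic in general; what is actually needed --- and what the proof of \cite[Prop.~5.2]{MehdiPandzicVogan15} runs on (the present paper only cites it) --- is that the $m_w$ are \emph{constant} on $\gl_0+\gL$. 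That constancy is the translation principle for the Dirac index, $DI_v(X\otimes F)=DI_v(X)\otimes F|_{\widetilde K}$, which makes $\gl\mapsto DI_v(X(\gl))$ itself a coherent family of virtual $\widetilde K$-modules; this key input is absent from your argument.

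Your character-theoretic third step cannot repair this as written. The identity for the character of $DI_v(X(\gl))$ on the compact Cartan is correct, but multiplying the Harish-Chandra character by $\prod_{\beta\in\gD_n^+}(e^{\beta/2}-e^{-\beta/2})$ cancels only the noncompact half of the Weyl denominator, leaving a numerator $\sum_{w\in W} a_w e^{w\gl}$ (whose coefficients are constant in $\gl$ precisely because the family is coherent) over the Weyl denominator for $\fk$; extracting the dimension is a limit at the identity and yields $\sum_w a_w P_K(w\gl)$ --- no division by the discriminant $\prod_{\ga\in\gD^+}\IP{\gl}{\ga}$ ever occurs, and the appeal to Chevalley--Shephard--Todd proves nothing here (a $W$-alternating polynomial divided by the discriminant is $W$-\emph{invariant}, not harmonic). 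The harmonicity comes instead from the Macdonald-type fact used elsewhere in this paper: $\prod_{\ga\in\gD_c^+}\IP{\gl}{\ga}$ is $W$-harmonic, since applying any positive-degree $W$-invariant constant-coefficient operator produces a $W_K$-alternating polynomial of degree strictly less than $\#\gD_c^+$, which must vanish; hence each $P_K(w\gl)$, and any \emph{constant}-coefficient combination of them, is $W$-harmonic. With the translation principle supplying constant $a_w$, this completes the proof; without it, your argument establishes at best polynomiality, and not even that by the route you give.
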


Suppose $X$ has regular infinitesimal character of Harish-Chandra parameter $W\cdot\gl_0$ with $\gl_0$ dominant.  Then there is a unique coherent family $\{X(\gl)\}$ with $X=X(\gl_0)$ (see, for example, \cite[Ch. 7, \S2]{Vogan81}).

\begin{Def}  The \emph{Dirac index polynomial} is the $W$-harmonic polynomial defined by $DI_p(X)=\dim(DI_v(X(\gl)))$.
\end{Def}
\noindent By the Weyl dimension formula, the Dirac index polynomial is homogeneous of degree $\#\gD_c^+$.

\begin{Prop}\upshape{(\cite[Prop.~5.6]{MehdiPandzicVogan15})} \label{prop:55}
If the Gelfand-Kirillov dimension of $X$ is less than $\#\gD^+-\#\gD_c^+$, then $DI_p(X)=0$.
\end{Prop}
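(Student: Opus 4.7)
The plan is to compute $DI_p(X)(\gl)=\dim DI_v(X(\gl))$ as a character value at the identity. By Proposition \ref{prop:22}, $DI_v(X(\gl))$ equals the formal virtual $\widetilde K$-module $X(\gl)\otimes S^+-X(\gl)\otimes S^-$, so on the preimage $\widetilde T\subset\widetilde K$ of a compact Cartan its character is the formal product $\Theta^K_{X(\gl)}\cdot\mathrm{ch}(S^+-S^-)$. From the weights listed in \eqref{eqn:wts}, one computes
$$
\mathrm{ch}(S^+-S^-)(e^H)\;=\;\prod_{\gb\in\gD_n^+}\bigl(e^{-\gb(H)/2}-e^{\gb(H)/2}\bigr),
$$
which vanishes to order $\#\gD_n^+=\#\gD^+-\#\gD_c^+$ at $H=0$.

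Next, I would estimate the singular behavior of the $K$-character $\Theta^K_{X(\gl)}$ at the identity in terms of the Gelfand--Kirillov dimension $d$ of $X$. A good filtration yields the finitely generated $S(\fg)$-module $\gr(X(\gl))$ supported on $\cN_\gt$ of Krull dimension $d$, and Hilbert--Samuel asymptotics applied $K$-isotypically show that $\Theta^K_{X(\gl)}(\exp H)$ has singular behavior at $H=0$ of order at most $d$ along regular directions; concretely, $m_\mu(X(\gl))$ grows in $\mu$ like $|\mu|^{d-\#\gD_c^+}$, and the Weyl denominator contributes another $\#\gD_c^+$ to the pole order, giving $d$ in total. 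Because Gelfand--Kirillov dimension is constant within a coherent family, the bound is uniform in $\gl$. The product $\Theta^K_{X(\gl)}\cdot\mathrm{ch}(S^+-S^-)$ is, by construction, the character of a genuine finite-dimensional virtual $\widetilde K$-module, hence extends regularly to $H=0$ with a zero of order at least $\#\gD_n^+-d$. When $d<\#\gD_n^+$ this forces the value at the identity, namely $\dim DI_v(X(\gl))$, to be zero; varying $\gl$ over the coherent family then yields $DI_p(X)\equiv 0$.

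The main subtlety I anticipate is rigorously justifying this ``order comparison'' at the identity, since $\Theta^K_{X(\gl)}$ is a priori only a formal sum of $K$-characters rather than an honest function near $H=0$. I expect to handle this either analytically, via Harish-Chandra's asymptotic expansion of the global character of $X(\gl)$ (whose Laurent coefficients along regular directions of the compact Cartan are governed by $d$), or more algebraically by passing through $\textup{K}^K(\cN_\gt)$ via Proposition \ref{prop:33} and reinterpreting $DI_p(X)$ as a $K$-theoretic pairing of $\gr([X])$ with the class $[S^+]-[S^-]$; the latter represents a top-dimensional Koszul/Euler class on $\cN_\gt$ and hence pairs trivially with any class supported on $K$-orbits of dimension strictly less than $\#\gD_n^+$.
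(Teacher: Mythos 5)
First, note that the paper does not actually prove this proposition: it is imported verbatim from \cite[Prop.~5.6]{MehdiPandzicVogan15}, so there is no internal argument to compare yours against. Judged on its own merits, your strategy (compare the order of vanishing of $\mathrm{ch}(S^+-S^-)=\prod_{\gb\in\gD_n^+}(e^{-\gb/2}-e^{\gb/2})$ at the identity with the order of the singularity of the $K$-character, which should be governed by the Gelfand--Kirillov dimension $d$) is the natural analytic route, but as written the central step has a genuine gap. The quantitative claim you state as fact, that $m_\mu(X(\gl))$ grows like $|\mu|^{\,d-\#\gD_c^+}$, is false in general: for a discrete series of $SL(2,\R)$ one has $d=1$, $\#\gD_c^+=0$, yet all $K$-multiplicities equal $1$. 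What Hilbert--Samuel theory gives is only the averaged asymptotics $\sum_{|\mu|\le t}m_\mu\dim E_\mu\sim Ct^{\,d}$, and converting this into a bound ``pole of order at most $d$ along regular directions'' is exactly the hard point: the formal series $\sum_\mu m_\mu\,\mathrm{ch}\,E_\mu$ does not converge pointwise near the identity, so the ``order comparison'' is not meaningful until you identify this formal character with an honest analytic function on the regular part of the compact Cartan and bound it there. Doing that amounts to invoking the relation between the Dirac index and the Harish-Chandra global character on $T$ (essentially \cite[Thm.~6.2]{MehdiPandzicVogan15}) together with the local character expansion at the identity, i.e., machinery at least as heavy as the proposition itself; your sketch defers precisely this.

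Your proposed algebraic alternative --- that $[S^+]-[S^-]$ is an ``Euler class'' pairing trivially with classes supported on orbits of dimension $<\#\gD_n^+$ --- is, as stated, just a reformulation of the statement to be proved. It can, however, be made rigorous with the tools of this paper: since $DI_p$ depends only on $K$-types it descends to $\textup{K}^K(\cN_\gt)$, and $\GR(X)$ is supported on orbits of dimension $\le d$; for any single orbit $\cO$ with $\dim\cO<\#\gD_n^+$, Corollary \ref{cor:ext} expresses an extension class through discrete series, and the difference-operator degree count in Section \ref{sec:proof} (the computation around (\ref{eqn:poly}), with (\ref{eqn:dimnext}) applied to $\cO$ in place of Lemma \ref{lem:degree}) shows the resulting $W_{K\cap L}$-alternating polynomial has degree $\#\gD_c^+(\fl)+(\dim\cO-\#\gD_n^+)<\deg P_{K\cap L}$, hence vanishes. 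That yields $DI_p(X)=0$ and would be a complete proof in the spirit of your second suggestion, but it is an argument you would still need to write out; neither it nor the analytic estimate is supplied in your proposal.
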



\section{Cohomological Induction}
Cohomologically induced representations play an important role in the proof of Theorem A.   In \S\ref{sec:ext} extensions of the sheaves $\cE_{\cO_k}(\tau_{kl})$ will be given a somewhat explicit form using  cohomological induction.  
We will also use a formula for the Dirac index of discrete series representations.

Let $\fq=\fl+\fu$ be a $\gt$-stable parabolic subalgebra of $\fg$ and $Q=LU$ the corresponding parabolic subgroup of $G$.  The cohomological induction functors (as defined in \cite{Vogan81} and \cite{KnappVogan95}) will be denoted by 
$$
\cR_\fq^i:\cM(\fl,K\cap L)\to\cM(\fg,K).
$$
The following are a few of the most basic properties.  Let $Z\in\cM(\fl,K\cap L)$.

\begin{enumerate}[(a)] 
\item  $\cR_\fq^i(Z)=0$, unless $0\leq i\leq s$, where $s:=\dim_\C(K/K\cap Q)$.  
\item If $Z$ has infinitesimal character $\gl_Z$, then $\cR_\fq^i(Z)$ has infinitesimal character $\gl_Z+\rho(\fu)$. 
\item If $Z$ is irreducible of infinitesimal character $\gl_Z$ and $\IP{\gl_Z+\rho(\fu)}{\gb}>0,$ for all $\gb\in\gD(\fu)$, then $\cR_\fq^s(Z)$ is irreducible and $\cR_\fq^i(Z)=0$, for $i\neq s$.  
\item In the special case when $\fq=\fb=\fh+\fn$, a $\gt$-stable Borel subalgebra, and $\rank(G)=\rank(K)$, $\cR_\fb^s(\C_\gl)$ is the Harish-Chandra module of a discrete series representation when $\gl$ is dominant (for $\gD^+=\gD(\fn)$) and analytically integral.  In this case the irreducible representation $E_\mu$ of $K$ having highest weight $\mu=\gl+2\rho_n$ (with respect to $\gD_c^+=\gD_c\cap \gD(\fn)$) occurs  with multiplicity one as a $K$-type in $\cR_\fb^s(\C_\gl)$.  All other $K$-types have highest weights of the form $\displaystyle \gl+2\rho_n+\sum_{\gb\in\gD_n^+}n_\gb\gb, \,n_\gb\geq 0$.  All discrete series representations occur in this way.  
\item Since a short exact sequence in $\cM(\fl,K\cap L)$ gives a long exact sequence of the cohomologically induced representations, the alternating sum $\sum_i(-1)^i\cR_\fq^{s-i}(Z)$ 
depends only on 
the class of $Z$ in $\bK(\fl,K\cap L)$.  We may therefore make the following definition.
\end{enumerate}
\begin{Def} Let $\cR_\fq:\bK(\fl,K\cap L)\to \bK(\fg,K)$ be defined by
$$
\cR_\fq([Z])=\sum_i(-1)^{i}[\cR^{s-i}_\fq(Z)].
$$
\end{Def}  
\begin{enumerate}[(f)]
\item  Let $\gL$ be the integral lattice in $\fh^*$.  If $\{Z(\gl)\}_{\gl\in\gl_0+\gL}$ is a coherent family of virtual $(\fl,K\cap L)$-modules, then 
$$\left\{X_\fq(\gl)=\cR_\fq\left([Z(\gl-\rho(\fu))] \right) \right\}_{\gl\in\gl_0+\rho(\fu)+\gL},
$$
is a coherent family of virtual $(\fg,K)$-modules.  See \cite[Cor 7.2.10]{Vogan81}.
\end{enumerate}

Of particular importance to us is the Blattner formula for the $K$-decomposition of cohomologically induced representations.  (See, for example, \cite[Thm. 6.3.12]{Vogan81}.)   The Blattner formula may be expressed as follows.  Suppose  $Z$ is a  Harish-Chandra $(\fl,K\cap L)$ module.  Consider irreducible $K\cap L$-representations $(\pi,F_\pi)$.  Then $\dim ( 
\Hom_K(\pi,Z))$ is the multiplicity of $\pi$ in $Z$ and
\begin{equation}\label{eqn:blattner}
\Cal R_\fq(Z)|_K = \sum_\pi \sum_{d=0}^\infty\dim (
\Hom_K(\pi,Z))\sum_q(-1)^q H^{s-q}\left(K/K\cap Q, F_\pi\otimes\C_{2\rho(\fu)}\otimes S^d(\fp\cap\fu)\right).
\end{equation}
In this formula $H^{s-q}\left(K/K\cap Q, F\right)$ denotes the sheaf cohomology of the sheaf of sections of the homogeneous vector bundle on $K/K\cap Q$ defined by the $K\cap Q$-representation $F_\pi$ ($K\cap U$ acting trivially).  This is computed by the Borel-Weil Theorem.  For example, if $\mu$ is dominant for a positive system $\gD_c^+$ containing $\gD_c(\fu)$ and $F_\mu$ is the irreducible representation or $K\cap L$ of highest weight $\mu$, then $H^{s-q}\left(K/K\cap Q, F_\mu\right)\simeq H^0(K/K\cap \bar{Q},F_\mu\otimes \C_{-2\rho_c(\fu)})$ is the irreducible representation of $K$ having highest weight $\mu-2\rho_c(\fu)$.  We also observe that the alternating sum of cohomology is well-defined on the Grothendieck group $\Rep(K\cap Q)=\Rep(K\cap L)$.  

Consider a Borel subalgebra $\fb=\fh+\fn$ (containing our compact Cartan subalgebra $\fh$) and set $\gD^+=\gD(\fn)$.   Define $\X(\gl)=\cR_\fb(\C_{\gl-\rho})$, giving the coherent family containing discrete series representations associated to $\fb$.  By (\ref{eqn:blattner}) the  $K$-decomposition of discrete series representations is therefore given by  
\begin{subequations}\label{eqn:bf}
\begin{align}\label{eqn:bf-1}
 \X(\gl)|_K&=\sum_{q=0}^s(-1)^q\sum_{d=0}^\infty H^{s-q}\left(K/K\cap B,\C_{\gl+\rho}\otimes S^d(\fn\cap\fp)\right)  \\  \label{eqn:bf-2}
 &=\sum_{q=0}^s(-1)^q\;\sum_{d=0}^\infty \;\;\sum_{\gd\in\gD(S^d(\fn\cap\fp))} H^{s-q}\left(K/K\cap B,\C_{\gl+\rho+\gd}\right).
\end{align}
\end{subequations}

The Dirac index of discrete series representations may now be computed.  Recall that we have defined the Weyl dimension polynomial by 
$$P_K(\gl)=\prod_{\ga\in\gD_c^+}\frac{\IP{\gl}{\ga}}{\IP{\rho_c}{\ga}}.
$$
Since $\{X_\fb(\gl)\}$ is a coherent family, $DI_p(X_\fb(\gl))$ is a harmonic polynomial (Prop.~\ref{prop:44}).  The following well-known result is implicit in \cite{Parthasarathy72} and follows
from the main result of \cite{HuangPandzic02}.

\begin{Prop}\label{prop:di-ds}Assume that $S^\pm$ are defined (as in (\ref{eqn:maxiso})) by the maximal isotropic subspace determined by $\gD^+=\gD(\fn)$.  Then $\displaystyle{ DI_p(\X(\gl))=P_K(\gl)}$, where the positive system of compact roots appearing in the definition of $P_K(\gl)$ is $\gD_c^+=\gD_c\cap\gD^+$.
\end{Prop}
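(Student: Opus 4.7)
The plan is to evaluate the polynomial $DI_p(\X(\gl))$ at a Zariski-dense set of parameters where $\X(\gl)$ is an honest irreducible discrete series, invoke the Parthasarathy--Huang--Pand\v{z}i\'c identification of its Dirac cohomology, and conclude by polynomial continuation. By Proposition~\ref{prop:44} the left-hand side is a polynomial in $\gl$, and $P_K(\gl)$ is manifestly polynomial, so equality on any Zariski-dense subset of $\fh^*$ forces equality everywhere.

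\textbf{Main steps.} First, fix $\gl$ dominant, regular and analytically integral with respect to $\gD^+=\gD(\fn)$. By property (d) of the cohomological induction review, $\cR_\fb^i(\C_{\gl-\rho})=0$ for $i\neq s$ and $\cR_\fb^s(\C_{\gl-\rho})$ is an irreducible discrete series module of Harish-Chandra parameter $\gl$; hence $\X(\gl)=[\cR_\fb^s(\C_{\gl-\rho})]$ in $\bK(\fg,K)$ and is represented by an honest module. Second, I invoke the main theorem of \cite{HuangPandzic02}, refining Parthasarathy \cite{Parthasarathy72}: the Dirac cohomology of this discrete series is the single irreducible $\widetilde{K}$-type $E_{\gl-\rho_c}$ of highest weight $\gl-\rho_c$, arising by pairing the lowest $K$-type (highest weight $\gl+\rho_n-\rho_c$) with the lowest weight $-\rho_n$ of the spin module. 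Inspecting (\ref{eqn:wts}) shows $-\rho_n$ is obtained from $I=\emptyset\subset\gD_n^+$, which has even cardinality, so $-\rho_n$ lies in $S^+$; consequently the whole Dirac cohomology sits in $S^+$, whence $DI_v(\cR_\fb^s(\C_{\gl-\rho}))=E_{\gl-\rho_c}$ as a virtual $\widetilde{K}$-representation. Third, by Weyl's dimension formula applied with $\gD_c^+=\gD_c\cap\gD^+$,
\begin{equation*}
\dim E_{\gl-\rho_c}=\prod_{\ga\in\gD_c^+}\frac{\IP{(\gl-\rho_c)+\rho_c}{\ga}}{\IP{\rho_c}{\ga}}=\prod_{\ga\in\gD_c^+}\frac{\IP{\gl}{\ga}}{\IP{\rho_c}{\ga}}=P_K(\gl).
\end{equation*}
This gives $DI_p(\X(\gl))=P_K(\gl)$ on the Zariski-dense set of analytically integral dominant regular $\gl$; by polynomial identity the equation holds for all $\gl$.

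\textbf{Main obstacle.} The substantive technical content lies entirely in the second step: verifying both that the Dirac cohomology of the discrete series is concentrated in a single multiplicity-one $\widetilde{K}$-type, and that this type sits in the $S^+$ part rather than $S^-$ (so that the $H_D^+$ and $H_D^-$ contributions to $DI_v$ do not cancel and the sign agrees with the positivity of $P_K$ on the dominant chamber). The first assertion is the classical Parthasarathy--Huang--Pand\v{z}i\'c computation; the second is a parity bookkeeping reconciling the conventions (\ref{eqn:wts}) for the splitting of $S$ with the lowest-$K$-type realisation of the Dirac cohomology type. Everything else reduces to the Weyl dimension formula and Zariski density.
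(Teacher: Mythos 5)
Your proposal is correct, but it takes a genuinely different route from the paper. You specialize to honest discrete series parameters, import the Parthasarathy--Huang--Pand\v{z}i\'c computation that $H_D$ of the discrete series with Harish-Chandra parameter $\gl$ is the single multiplicity-one $\widetilde{K}$-type $E_{\gl-\rho_c}$ (together with unitarity, so $H_D=\Ker D$, and a parity check placing that type in the $S^+$ half), and then conclude by the Weyl dimension formula and Zariski density. The paper instead never touches Dirac cohomology or unitarity: using Proposition \ref{prop:22} it computes $\X(\gl)\otimes(S^+-S^-)$ for the whole coherent family at once from the Blattner formula (\ref{eqn:bf}), cancels the symmetric algebra against $S^+-S^-$ via the Koszul identity (\ref{eqn:kz}) to get $\sum_q(-1)^qH^{s-q}(K/K\cap B,\C_{\gl+\rho_c})$, and identifies this with $E_{\gl-\rho_c}$ by Borel--Weil at dominant $\gl$, ending with the same polynomial-continuation step as yours. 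What your route buys is brevity, provided one grants the classical input --- which is exactly what the paper's preamble to the Proposition acknowledges ("implicit in Parthasarathy, follows from the main result of Huang--Pand\v{z}i\'c"); the load-bearing point in your argument is the assertion that the whole Dirac cohomology sits in $X\otimes S^+$, which really rests on multiplicity one of $E_{\gl-\rho_c}$ in $X\otimes S$ and the identification of the specific copy generated by the lowest $K$-type vector tensored with the $-\rho_n$ weight vector, i.e.\ precisely the Parthasarathy computation you cite. What the paper's route buys is self-containedness (only the formal identity $DI_v=X\otimes S^+-X\otimes S^-$ plus character theory on $K$), a sign that comes out automatically rather than via the $S^\pm$ parity bookkeeping (compare Remark \ref{rem:sign}), and, importantly, the Koszul cancellation mechanism it sets up is reused immediately in Proposition \ref{lem:bi} and in Section \ref{sec:ext}, so the direct proof doubles as preparation for the extension formulas needed in the main theorem.
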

\begin{proof} Writing $B=HN$, we have
$$
S^+-S^-= \sum_j (-1)^j \wedge^j(\fn\cap\fp)\otimes \C_{-\rho_n}
$$
as $H$-representations.  Tensoring by $S(\fn\cap\fp)$, gives
\begin{equation}\label{eqn:fk}
S(\fn\cap\fp)\otimes(S^+-S^-)=   \sum_j (-1)^j S(\fn\cap\fp) \otimes \wedge^j(\fn\cap\fp)\otimes \C_{-\rho_n}.
\end{equation}
Now the Koszul resolution
\begin{equation}\label{eqn:koszul}\cdots\to\wedge^2(\fn\cap\fp)\otimes S(\fn\cap \fp)   \to  \wedge^1(\fn\cap\fp)\otimes S(\fn\cap \fp)\to S(\fn\cap \fp) \to \C\to 0,  
\end{equation}
an exact sequence of $B\cap K$-representations, says the right-hand side of (\ref{eqn:fk}) is $\C_{-\rho_n}$ and we get
\begin{equation}\label{eqn:kz}
S(\fn\cap\fp)\otimes(S^+-S^-)=  \C_{-\rho_n}.
\end{equation}
  
The Dirac index is 
\begin{align*}
X_\fb(\gl)\otimes S^+&-X_\fb(\gl)\otimes S^-  =X_\fb(\gl)\otimes (S^+- S^-)  \\
  & = \sum_q(-1)^q H^{s-q}\left(K/K\cap B, \C_{\gl+\rho}\otimes S(\fn\cap\fp)\otimes (S^+-S^-)\right)  \\
  & = \sum_q(-1)^q H^{s-q}\left(K/K\cap B, \C_{\gl+\rho_c}\right),\text{ by (\ref{eqn:kz})} .
\end{align*}
When $\gl$ is $\gD_c^+$-dominant, this is the finite dimensional representation of $K$ with highest weight $\gl-\rho_c$, which has dimension $P_K(\gl)$.  Therefore, the two polynomials $P_K(\gl)$ and $DI_p(X_\fb(\gl))$ coincide.
\end{proof}
\begin{Rem}\label{rem:sign} If a different positive system $\widetilde{\gD^+}$ is used to define the maximal isotropic subspace $\widetilde{\fn}_\fp$ determining $\widetilde{S^\pm}$ as in the discussion surrounding (\ref{eqn:maxiso}), then the Dirac index polynomial of $X_\fb(\gl)$ will be $\pm P_K(\gl)$.  The sign is determined as follows.  If $S^\pm$ denotes the choice determined by $\gD^+$, then 
\begin{equation*}
 \widetilde{S^+}=\begin{cases}
    S^+,&\text{if }\#(\widetilde{\gD_n^+}\smallsetminus \gD_n^+)\text{ is even}  \\
    S^-,&\text{if }\#(\widetilde{\gD_n^+}\smallsetminus \gD_n^+)\text{ is odd}, 
    \end{cases}
\end{equation*}
since the weight $-\rho_n$ occurs in $\widetilde{S^+}$ if and only if $\#(\widetilde{\gD_n^+}\smallsetminus \gD_n^+)$ is even.  Thus the corresponding Dirac index is $\widetilde{DI}=(-1)^{\#(\widetilde{\gD_n^+}\smallsetminus \gD_n^+)}DI$.  Therefore,
\begin{align*}
  DI_p(X_{\tilde{\fb}}(\gl))&=(-1)^{\#(\widetilde{\gD_n^+}\smallsetminus \gD_n^+)}\widetilde{DI}_p(X_{\tilde{\fb}}(\gl))  \\
  &=(-1)^{\#(\widetilde{\gD_n^+}\smallsetminus \gD_n^+)}\widetilde{P}_K(\gl), \text{ $\widetilde{P}_K(\gl)$ defined by }\widetilde{\gD_c^+},  \\
  &=(-1)^{\#(\widetilde{\gD^+}\smallsetminus \gD^+)}P_K(\gl).
\end{align*}  
\end{Rem}

The Blattner  formula may be `inverted' to give an expression for an irreducible $K$-representation as a virtual $(\fg,K)$-module.  This will  allow us (in \S\ref{sec:ext}) to give expressions for extensions of the sheaves $\cE_\cO(\tau)$.

\begin{Prop}\label{lem:bi}  Let $\fb$ be a Borel subalgebra as above and let $\gg\in\fh^*$ be analytically integral.  Then 
\begin{equation}\label{eqn:bi-1}
\sum_q(-1)^qH^{s-q}(K/K\cap Q,  \C_{\gg+2\rho_c})=\sum_{A\subset\gD_n^+}(-1)^{\#A} \X(\gg+\rho_c-\rho_n+2\rho(A))|_K.
\end{equation}
In particular, if $\gg$ is $\gD_c^+$-dominant then the irreducible representation of $K$ having highest weight $\gg$ may be written as
\begin{equation}\label{eqn:bi}
E_\gg=\sum_{A\subset\gD_n^+}(-1)^{\#A} \X(\gg+\rho_c-\rho_n+2\rho(A))|_K.
\end{equation}
\end{Prop}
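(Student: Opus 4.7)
My plan is to invert the Blattner formula \eqref{eqn:bf} directly. Starting from the right-hand side of \eqref{eqn:bi-1}, I apply \eqref{eqn:bf} to each virtual discrete series $\X(\gg+\rho_c-\rho_n+2\rho(A))|_K$. With the substitution $\gl = \gg+\rho_c-\rho_n+2\rho(A)$ and the identity $\rho = \rho_c+\rho_n$, one has $\gl+\rho = \gg+2\rho_c+2\rho(A)$. Interchanging the finite sum over $A\subset\gD_n^+$ with the cohomology functor and with the sum over symmetric powers, the right-hand side of \eqref{eqn:bi-1} becomes
\begin{equation*}
\sum_q (-1)^q H^{s-q}\!\left(K/K\cap B,\;\C_{\gg+2\rho_c}\otimes S(\fn\cap\fp)\otimes \Bigl(\sum_{A\subset\gD_n^+}(-1)^{\#A}\C_{2\rho(A)}\Bigr)\right).
\end{equation*}

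The next step is to recognize, using the decomposition of each $\wedge^j(\fn\cap\fp)$ by its $\fh$-weights (sums of $j$ distinct elements of $\gD_n^+$), the identity $\sum_{A\subset\gD_n^+}(-1)^{\#A}\C_{2\rho(A)} = \sum_j(-1)^j\wedge^j(\fn\cap\fp)$ as a virtual $(K\cap B)$-module, which is precisely the weight bookkeeping used in the derivation of \eqref{eqn:fk}. The Koszul resolution \eqref{eqn:koszul} then collapses $S(\fn\cap\fp)\otimes\sum_j(-1)^j\wedge^j(\fn\cap\fp)$ to the trivial module $\C$ in the Grothendieck group of $(K\cap B)$-representations. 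Substituting back eliminates the $S(\fn\cap\fp)$ twist and leaves exactly $\sum_q(-1)^q H^{s-q}(K/K\cap B,\C_{\gg+2\rho_c})$, which is the left-hand side of \eqref{eqn:bi-1}.

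To deduce \eqref{eqn:bi}, I specialize to $\gg$ that is $\gD_c^+$-dominant, so $\gg+2\rho_c$ is $\gD_c^+$-regular dominant. The Borel--Weil--Bott identification $H^{s-q}(K/K\cap B,\C_\mu) \simeq H^0(K/K\cap\bar B,\C_\mu\otimes\C_{-2\rho_c})$ recorded in the discussion after \eqref{eqn:blattner} shows that the cohomology is concentrated in a single degree and realizes the irreducible $K$-representation of highest weight $\gg+2\rho_c-2\rho_c=\gg$, namely $E_\gg$. A parity check on the concentrated degree---calibrated so that the $d=0$ term of \eqref{eqn:bf-1} produces the lowest $K$-type of $\X(\gl)$ with multiplicity $+1$---shows that the corresponding sign $(-1)^q$ is $+1$, so the alternating sum evaluates to $E_\gg$.

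The main obstacle I anticipate is the final sign check: verifying that the unique nonzero Borel--Weil--Bott cohomology degree has the parity that cancels the $(-1)^q$ in the alternating sum, giving $E_\gg$ with coefficient $+1$ rather than $-1$. All other steps are immediate: the Blattner formula for the discrete series side, an elementary $\fh$-weight computation identifying the alternating sum over subsets with $\sum_j(-1)^j\wedge^j(\fn\cap\fp)$, and the Koszul identity already exploited in \eqref{eqn:kz} to prove Proposition~\ref{prop:di-ds}.
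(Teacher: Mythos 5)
Your proposal is correct and follows essentially the same route as the paper: apply the Blattner formula \eqref{eqn:bf} to each $\X(\gg+\rho_c-\rho_n+2\rho(A))$, identify $\sum_{A}(-1)^{\#A}\C_{2\rho(A)}$ with $\sum_j(-1)^j\wedge^j(\fn\cap\fp)$ on weights, collapse via the Koszul resolution \eqref{eqn:koszul}, and finish with Borel--Weil in the dominant case. Your extra calibration of the sign via the $d=0$ lowest-$K$-type term just makes explicit the convention the paper's Borel--Weil statement (after \eqref{eqn:blattner}) already encodes, namely that the cohomology concentrates at $q=0$ with coefficient $+1$.
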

\begin{proof}
Using (\ref{eqn:bf}), the right-hand side of (\ref{eqn:bi-1}) is 
\begin{equation*}
\sum_{A\subset\gD_n^+}(-1)^{\#A}\sum_{q=0}^s(-1)^q\sum_{d=0}^\infty H^{s-q}(K/K\cap B, \C_{\gg+2\rho_c+2\rho(A)}\otimes S^d(\fn\cap\fp)). 
\end{equation*}
Using the Koszul resolution (\ref{eqn:koszul}), we have (as virtual $H$-representations)
\begin{equation*}
  \sum_{A\subset\gD_n^+}(-1)^{\#A}\sum_{d=0}^\infty\C_{\gg+2\rho_c+2\rho(A)}\otimes S^d(\fn\cap\fp)=\C_{\gg+2\rho_c},
\end{equation*}
and the statement follows.  The formula for $E_\gg$ follows from the Borel-Weil Theorem,
when $\gg $ is $\gD_c^+$-dominant.
\end{proof}


\section{An extension}\label{sec:ext}

In this section we give extensions,  from $\cO$ to $\cN_\gt$, of the coherent sheaves $\cE_\cO(\tau)$ for all $K$-orbits in $\cN_\gt$.  These extensions will be constructed as pushforwards of vector bundles under a resolution of singularities of $\bar{\cO}$.

Let $\cO=K\cdot x$ be our arbitrary $K$-orbit in $\cN_\gt$.  Identifying $\cN_\gt$ with $\cN\cap \fp\subset\fg$, $x$ fits into a standard triple $\{x,h,y\}$ with semisimple element $h$ lying in $\fh$.  The eigenvalues of $\ad(h)$ are integers, therefore give a grading of $\fg$:
$$\fg_m=\{X\in\fg: [h,X]=mX\},\, m\in\Z.
$$
We set $\fp_m=\fg_m\cap \fp$, the $m$-eigenspace in $\fp$.  The grading gives a $\gt$-stable parabolic subalgebra 
$$\fq=\fl+\fu, \text{ with } \fl=\fg_0 \text{ and } \fu=\sum_{m\leq -1}\fg_m.
$$  
Note that $\gD(\fu)=\{\ga\in\gD : \ga(h)<0\}$.  The corresponding parabolic subgroup in $G$ and Levi decomposition are written as $Q=LU$.

There are two well-known facts that play an important role for us (\cite{KostantRallis71}):
\begin{enumerate}[(i)]  
\item The stabilizer $K^x$ has a Levi decomposition with reductive part $(K^x)_{\rm red}=K\cap L^x=(K\cap L)^x$ and $K^x\subset K\cap\bar{Q}$.  
\item If we set $\fp[2]:=\sum_{m\geq 2}\fp_m$, then $\fp[2]$ is stable under $\Ad(K\cap\bar{Q})$ and the morphism 
\begin{equation}\label{eqn:resolution}
 \mu:K\underset{K\cap \bar{Q}}{\times}\fp[2]\to\bar{\cO},\, \, \mu(k,\xi):=k\cdot\xi,
\end{equation}
 is a resolution of singularities.
\end{enumerate}
The higher direct images $R^i\mu_*$ of the morphism $\mu$ give a homomorphism of $\textup{K}^K$ groups: 
\begin{equation}\label{eqn:push-fwd}
\begin{split}
&\mu_*:\textup{K}^K(K\underset{K\cap\bar{Q}}{\times}\fp[2])\to \textup{K}^K(\cN_\gt) \\
&\mu_*([\cS]):=\sum_i(-1)^i[R^i\mu_*(\cS)].
\end{split}
\end{equation}
If $\cS$ is a coherent sheaf on $K\underset{K\cap\bar{Q}}{\times}\fp[2]$, then each higher direct image $R^i\mu_*(\cS)$ is a coherent sheaf on $\bar{\cO}$, as $\mu$ is proper.  We view the right-hand side of (\ref{eqn:push-fwd}) as in $\textup{K}^K(\cN_\gt)$ by extension by zero from $\bar{\cO}$ to $\cN_\gt$.

Let $\gs$ be a representation of $K\cap\bar{Q}$.  Then there is a $K$-equivariant vector bundle $K\underset{K\cap\bar{Q}}{\times}(\fp[2]\times\gs)\to K\underset{K\cap\bar{Q}}{\times}\fp[2]$; write $\cS_\gs$ for its sheaf of algebraic sections.

\begin{Prop}\label{prop:ext} \upshape{(1)} $[\mu_*(\cS_\gs)]$ is an extension of $\cE_\cO(\gs|_{K^x})$. \\
\upshape{(2)} $\gG(\mu_*(\cS_\gs))|_K=\sum_j(-1)^j\cR_\fq(\gs\otimes\wedge^j\fp_1^* \otimes\C_{-2\rho(\fp\cap\fu)})|_K$
\end{Prop}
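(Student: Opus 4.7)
For (1), I would argue that $\mu$ is a $K$-equivariant isomorphism over the open orbit $\cO$. Indeed $\mu$ is a resolution of singularities by fact (ii), and fact (i), $K^x\subset K\cap\bar Q$, forces the fiber $\mu^{-1}(x)$ to be the single reduced point $[1,x]$: any $(k,\xi)\in K\times\fp[2]$ with $k\cdot\xi=x$ gives $\xi=k^{-1}x$, and the condition $k^{-1}x\in\fp[2]$ combined with $K^x\subset K\cap\bar Q$ forces $k\in K\cap\bar Q$, so $(k,\xi)\sim(1,x)$. Consequently $R^i\mu_*\cS_\gs|_\cO=0$ for $i>0$, while $\mu_*\cS_\gs|_\cO$ is the sheaf of sections of $K\underset{K^x}{\times}V_\gs$, that is, $\cE_\cO(\gs|_{K^x})$, proving (1).

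For (2), I first apply two Leray spectral sequences. Since $\bar\cO$ is affine, higher cohomology of coherent sheaves on $\bar\cO$ vanishes, so
$$\gG(\mu_*\cS_\gs)|_K=\sum_i(-1)^i H^i\!\bigl(K\underset{K\cap\bar Q}{\times}\fp[2],\,\cS_\gs\bigr)$$
in $\Rep(K)$. The projection $\pi:K\underset{K\cap\bar Q}{\times}\fp[2]\to K/K\cap\bar Q$ is an affine bundle with fibre $\fp[2]$, so $R^{>0}\pi_*\cS_\gs=0$ and $\pi_*\cS_\gs$ is the sheaf associated with the $K\cap\bar Q$-module $S(\fp[2]^*)\otimes V_\gs$. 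Next, since $\fp_1^*$ is a $K\cap\bar Q$-stable regular sequence in $S((\fp\cap\bar\fu)^*)$ cutting out $\fp[2]$ (note that $\fp[2]\subset\fp\cap\bar\fu$ is $K\cap\bar Q$-stable, with $\fp_1=(\fp\cap\bar\fu)/\fp[2]$ carrying the trivial $K\cap\bar U$-action), the $K\cap\bar Q$-equivariant Koszul resolution
$$\cdots\to\wedge^j\fp_1^*\otimes S((\fp\cap\bar\fu)^*)\to\cdots\to S((\fp\cap\bar\fu)^*)\to S(\fp[2]^*)\to 0$$
gives $[S(\fp[2]^*)]=\sum_j(-1)^j[\wedge^j\fp_1^*\otimes S((\fp\cap\bar\fu)^*)]$ in the Grothendieck group. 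Combining these steps,
$$\gG(\mu_*\cS_\gs)|_K=\sum_j(-1)^j\sum_i(-1)^i H^i\!\bigl(K/K\cap\bar Q,\;\cF_{\wedge^j\fp_1^*\otimes S((\fp\cap\bar\fu)^*)\otimes V_\gs}\bigr).$$

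To identify this with the right-hand side of (2), I would pass from $K/K\cap\bar Q$ to $K/K\cap Q$ via the standard Borel-Weil-Bott relation between cohomology on opposite flag varieties of $K$, which produces a twist by $\C_{2\rho_c(\fu)}$; simultaneously I identify $(\fp\cap\bar\fu)^*\cong\fp\cap\fu$ as $K\cap L$-modules using the Killing form. Then, using $2\rho_c(\fu)=2\rho(\fu)-2\rho(\fp\cap\fu)$, the inner summand becomes
$$\sum_q(-1)^q H^{s-q}\!\bigl(K/K\cap Q,\;\gs\otimes\wedge^j\fp_1^*\otimes\C_{-2\rho(\fp\cap\fu)}\otimes\C_{2\rho(\fu)}\otimes S(\fp\cap\fu)\bigr),$$
which is precisely the Blattner-formula expression \eqref{eqn:blattner} for $\cR_\fq(\gs\otimes\wedge^j\fp_1^*\otimes\C_{-2\rho(\fp\cap\fu)})|_K$. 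I expect the main obstacle to be the careful bookkeeping of the $\rho$-shifts in this last step: tracking the precise Borel-Weil-Bott shift arising when transferring cohomology between the opposite flag varieties $K/K\cap\bar Q$ and $K/K\cap Q$, and verifying that it combines with the normalization $\C_{-2\rho(\fp\cap\fu)}$ in the statement to reproduce exactly the $\C_{2\rho(\fu)}$ in Blattner's formula; the auxiliary sign conventions (e.g.\ the substitution $i\leftrightarrow s-q$) must be checked to cancel coherently in the Grothendieck group.
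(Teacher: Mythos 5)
Your proposal is correct and follows essentially the same route as the paper: the paper simply quotes Achar's thesis both for the fact that $[\mu_*(\cS_\gs)]$ extends $\cE_\cO(\gs|_{K^x})$ and for the section formula $\gG(\mu_*(\cS_\gs))=\sum_q(-1)^q\sum_d H^q\bigl(K/K\cap\bar{Q},\gs\otimes S^d(\fp[2]^*)\bigr)$ (which you re-derive via the isomorphism of $\mu$ over $\cO$ and the affine-bundle/Leray argument), and from there both arguments coincide verbatim — Koszul resolution along $\fp_1$, passage to $K/K\cap Q$ with the $\C_{2\rho_c(\fu)}$ twist, and identification with the Blattner formula using $2\rho_c(\fu)=2\rho(\fu)-2\rho(\fp\cap\fu)$. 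The only caveat is that your fiber computation over $x$ does not follow from $K^x\subset K\cap\bar{Q}$ alone (one needs $\cO\cap\fp[2]=(K\cap\bar{Q})\cdot x$), but this is subsumed in the birationality of $\mu$ asserted in fact (ii), so the conclusion stands.
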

\begin{Rem} The terms $\cR_\fq(\gs\otimes \wedge^j\fp_1^*\otimes\C_{-2\rho(\fp\cap\fu)})$ of the sum in (2) requires some explanation.  We view the (finite dimensional representations) $\gs \otimes \wedge^j\fp_1^*\otimes\C_{-2\rho(\fp\cap\fu)}$ as virtual $(\fl,K\cap L)$-modules.  This may be accomplished by using equation (\ref{eqn:bi}) of Proposition \ref{lem:bi} applied to $(\fl,K\cap L)$-modules.
\end{Rem}

\begin{proof} In \cite[\S2]{Achar01} it is shown that $\mu_*(\cS_\gs)$ is an extension of $[\cE_\cO(\gs|_{K^x})]$.  The space of sections (as $K$-module) is computed  in \cite[\S2]{Achar01}:
\begin{equation*}
\gG(\mu_*(\cS_\gs))=\sum_q(-1)^q\sum_{d=0}^\infty H^q\left(K/K\cap\bar{Q},\gs\otimes S^d(\fp[2]^*)\right).
\end{equation*}
By the Koszul resolution, $\C=\sum_j(-1)^j\wedge^j\fp_1^*\otimes S(\fp_1^*)$ as virtual $K\cap L$-representations.  Therefore,
\begin{align*}
\gG(\mu_*(\cS_\gs))&=\sum_q(-1)^q\sum_{d=0}^\infty \sum_j(-1)^j H^q\left(K/K\cap\bar{Q},\gs\otimes \wedge^j\fp_1^*\otimes  S^d((\fp\cap\bar{\fu})^*)\right)   \\
&=\sum_q(-1)^q\sum_{d=0}^\infty \sum_j(-1)^j H^q\left(K/K\cap\bar{Q},\gs\otimes \wedge^j\fp_1^*\otimes  S^d(\fp\cap \fu ) \right)   \\
&=\sum_{d=0}^\infty \sum_j(-1)^j \sum_q(-1)^q H^{s-q}\left(K/K\cap Q,\gs\otimes \wedge^j\fp_1^*\otimes  S^d(\fp\cap \fu )\otimes \C_{2\rho_c(\fu)} \right)   
\end{align*}
By equation (\ref{eqn:blattner}), this is precisely the $K$-decomposition of 
\begin{equation*}
\sum_j(-1)^j\cR_\fq(\gs\otimes \wedge^j\fp_1^*\otimes \C_{-2\rho(\fp\cap\fu)}),
\end{equation*}
viewing $\gs\otimes \wedge^j\fp_1^*\otimes \C_{-2\rho(\fp\cap\fu)}$ as the restriction of a (virtual) $(\fl,K\cap L)$-module.
\end{proof}

Part (2) of the proposition may be put into a more useful form for us.   First, it suffices to assume $\gs$ is irreducible with highest weight $\gl_\gs$.  Then
\begin{align*}
  \gs\otimes\wedge^j\fp_1^* & \otimes\C_{-2\rho(\fp\cap\fu)} \\
  & \simeq \sum_{\gl\in\gD(\wedge^j\fp_1^*)}\sum_i(-1)^iH^i(K\cap L/K\cap L\cap \bar{B},\C_{\gl_\gs+\gl-2\rho(\fp\cap\fu)}), \text{ by Borel-Weil},  \\
  &\simeq \;\;\sum_i\sum_{C\subset\gD(\fp_1),\#C=j}(-1)^iH^i(K\cap L/K\cap L\cap \bar{B},\C_{\gl_\gs-2\rho(C)-2\rho(\fp\cap\fu)})  \\  
    &\simeq \;\;\sum_i\sum_{C\subset\gD(\fp_1),\#C=j}(-1)^iH^{s_1-i}(K\cap L/K\cap L\cap B,\C_{\gl_\gs-2\rho(C)-2\rho(\fp\cap\fu)+2\rho_c(\fl)}),  \\
   &\qquad\qquad \text{ where }s_1=\dim(K\cap L/K\cap L\cap B),  \\
  &\simeq\sum_{A\subset\gD_n^+(\fl)}\,\,\sum_{\substack{C\subset\gD(\fp_1) \\ \#C=j\,}}(-1)^{\#A}\XL(\gl_\gs+\rho_c(\fl)-\rho_n(\fl)+2\rho(A)-2\rho(C)-2\rho(\fp\cap\fu)), \\
    &\qquad\qquad\text{ by Lemma \ref{lem:bi} applied to $(\fl,K\cap L)$}.
\end{align*}
(Note: any positive system for $\gD(\fl)$ that contains $\gD_c(\fl)^+=\gD_c^+\cap \gD(\fl)$ may be used here.)  Now substituting into the formula of Part (2) of Proposition \ref{prop:ext} and using induction in stages for cohomological induction ($\cR_\fb=\cR_\fq\circ\cR_{\fb\cap\fl}$) gives the following corollary.
\begin{Cor}\label{cor:ext} For $\cS_\gs$ as above 
$$\displaystyle \gG(\mu_*(\cS_\gs))|_K\simeq \sum_{\substack{A\subset\gD_n^+(\fl) \\ C\subset\gD(\fp_1)}} (-1)^{\#A+\#C}\X(\gl_\gs+\rho_c(\fl)-\rho_n(\fl)+2\rho(A)-2\rho(C)+\rho(\fu)-2\rho(\fp\cap\fu))|_K.$$
\end{Cor}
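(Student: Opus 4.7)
The plan is to start from Part (2) of Proposition \ref{prop:ext}, which already expresses $\gG(\mu_*(\cS_\gs))|_K$ as an alternating sum over $j$ of $\cR_\fq$ applied to certain finite dimensional $(\fl,K\cap L)$-modules, and then progressively rewrite each term until only contributions of the form $\X(\cdot)|_K$ remain. Because $\cR_\fq$ is additive on exact sequences (it descends to $\bK(\fl,K\cap L)\to\bK(\fg,K)$), I may replace each $(\fl,K\cap L)$-module by any virtual expression that agrees with it in $\bK(\fl,K\cap L)$.

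First I would reduce to the case $\gs$ irreducible with highest weight $\gl_\gs$, which is harmless because everything in sight is additive. The weights of $\wedge^j\fp_1^*$ are precisely $-2\rho(C)$ for subsets $C\subset\gD(\fp_1)$ with $\#C=j$, so expanding $\wedge^j\fp_1^*$ into one-dimensional $H$-summands and using the Borel-Weil theorem for $K\cap L/K\cap L\cap\bar B$ expresses
$$\gs\otimes\wedge^j\fp_1^*\otimes\C_{-2\rho(\fp\cap\fu)}$$
as an alternating sum over $i$ and $C$ of sheaf cohomologies $H^i(K\cap L/K\cap L\cap\bar B,\C_{\gl_\gs-2\rho(C)-2\rho(\fp\cap\fu)})$; switching to the opposite Borel with the shift $2\rho_c(\fl)$ converts these to $H^{s_1-i}(K\cap L/K\cap L\cap B,\,\cdot\,)$ where $s_1=\dim_\C(K\cap L/K\cap L\cap B)$.

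Next I apply Lemma \ref{lem:bi} to the pair $(\fl,K\cap L)$ with Borel $\fb\cap\fl$: this precisely rewrites the alternating sum $\sum_i(-1)^i H^{s_1-i}(K\cap L/K\cap L\cap B,\C_\gg)$ as $\sum_{A\subset\gD_n^+(\fl)}(-1)^{\#A}\XL(\gg+\rho_c(\fl)-\rho_n(\fl)+2\rho(A))$, thus producing the intermediate expression for $\gs\otimes\wedge^j\fp_1^*\otimes\C_{-2\rho(\fp\cap\fu)}$ in $\bK(\fl,K\cap L)$ that already appears in the discussion preceding the corollary.

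Finally, I would substitute into Part (2) of Proposition \ref{prop:ext} and apply induction in stages $\cR_\fb=\cR_\fq\circ\cR_{\fb\cap\fl}$; the composition sends each $\C_{\gl-\rho(\fl)}$ to $\XL(\gl)$ after $\cR_{\fb\cap\fl}$ and then to $\X(\gl+\rho(\fu))$ after $\cR_\fq$, which accounts for the terminal $\rho(\fu)$ shift. Collecting the signs $(-1)^{\#A+\#C}$ (the index $j=\#C$ disappears into the sum over $C$) and restricting to $K$ yields the displayed formula. The main obstacle is bookkeeping: keeping the $\rho$-shifts consistent when switching between $\fb\cap\fl$ and its opposite, when passing from $\fp[2]^*$ to $\fp\cap\fu$ via Koszul, and when composing $\cR_{\fb\cap\fl}$ with $\cR_\fq$; everything else is a formal consequence of results already cited from \cite{Vogan81} and \cite{Achar01}.
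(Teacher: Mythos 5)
Your proposal is correct and follows essentially the same route as the paper: reduce to irreducible $\gs$, expand $\gs\otimes\wedge^j\fp_1^*\otimes\C_{-2\rho(\fp\cap\fu)}$ via weights $-2\rho(C)$ and Borel--Weil on $K\cap L/K\cap L\cap\bar B$ (with the $2\rho_c(\fl)$ shift to the opposite Borel), apply Proposition \ref{lem:bi} for $(\fl,K\cap L)$ to get the $\XL$ expression, and then substitute into Part (2) of Proposition \ref{prop:ext} using induction in stages $\cR_\fb=\cR_\fq\circ\cR_{\fb\cap\fl}$, which produces exactly the $\rho(\fu)$ shift. The bookkeeping of shifts you flag is handled the same way in the paper, so there is nothing further to add.
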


We have not yet given an extension of $\cE_{\cO}(\tau)$ for an arbitrary $\tau\in(K^x)^{\dual}$.  We prove below that restriction 
$$\Rep(K\cap\bar{Q})\to \Rep(K^x),$$
is a surjection.  This is equivalent to saying the each irreducible representation $\tau$ of $K^x$ extends to a \emph{virtual } representation of $K\cap\bar{Q}$.  (It is typically \emph{not} the case that $\tau$ extends to an honest representation of $K\cap\bar{Q}$.)
Let $\tau\in(K^x)^{\dual}$ and write $\tau=\sum_j n_j\gs_j|_{K^x}$, with $n_j\in\Z$ and $\gs_j\in(K\cap\bar{Q})^{\dual}$.  Now Corollary \ref{cor:ext} tells us how to express an extension $[\tilde{\cE}_{\cO}(\tau)]=\sum_j n_j [\tilde{\cE}_{\cO}(\gs|_{K^x})]$ in terms of discrete series representations.
\begin{Rem} This extension is not canonical since there seems to be no canonical way to extend $\tau$ to a virtual representation of $K\cap\bar{Q}$.
\end{Rem}

The following proposition gives the fact used above.
\begin{Prop}\label{prop:surj}   The restriction map $\Rep(K\cap\bar{Q})\to \Rep(K^x)$ is surjective.
\end{Prop}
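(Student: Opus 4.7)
The plan is a two-step reduction: first to the restriction $\Rep(K\cap L)\to \Rep((K\cap L)^x)$ between Levi factors, and then to the surjectivity of pullback in equivariant $K$-theory along an open embedding into $\fp_2$.

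For the first step, I would use the Levi decompositions $K\cap\bar{Q}=(K\cap L)\ltimes (K\cap\bar U)$ and $K^x=(K\cap L)^x\ltimes (K^x)_u$ (the latter being fact (i) of Section~\ref{sec:ext}). In characteristic zero, every finite-dimensional algebraic representation of a unipotent group has a non-zero invariant vector; since each unipotent radical is normal, every irreducible representation of $K\cap\bar Q$ (resp.~$K^x$) must be trivial on it, hence is pulled back from an irreducible of $K\cap L$ (resp.~$(K\cap L)^x$). Inflation thus gives isomorphisms $\Rep(K\cap\bar Q)\cong \Rep(K\cap L)$ and $\Rep(K^x)\cong \Rep((K\cap L)^x)$ that intertwine the two restriction maps, reducing the claim to surjectivity of the restriction $\Rep(K\cap L)\to\Rep((K\cap L)^x)$.

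The main geometric step is to show that the orbit $\cO':=(K\cap L)\cdot x$ is open in $\fp_2$. Since $K\cap L$ acts linearly on $\fp_2$, openness amounts to the equality $[\fk\cap\fl,x]=\fp_2$. To verify this I would decompose $\fg$ as a module for the $\mathfrak{sl}_2$-triple $\{x,h,y\}$ and track how $\gt$ acts on each isotypic component. From $\gt x=-x$, $\gt h=h$, $\gt y=-y$ one sees that $\gt$ anticommutes with $\ad(x),\ad(y)$ and commutes with $\ad(h)$; on any $\mathfrak{sl}_2$-irreducible of highest weight $k$, Schur forces $\gt$ to act on consecutive weight subspaces by scalars of opposite sign. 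Consequently, within each isotypic component, the grade-$0$ piece lies in $\fk$ precisely when the grade-$2$ piece lies in $\fp$; since $\ad(x)$ is an isomorphism from the weight-$0$ subspace to the weight-$2$ subspace of every irreducible of highest weight $\geq 2$, summing yields $[\fk\cap\fl,x]=\fp_2$. This $\mathfrak{sl}_2$/involution compatibility is the step I expect to require the most care.

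To conclude, I would apply the identifications $\textup K^{K\cap L}(\fp_2)\cong\Rep(K\cap L)$ (linear action of $K\cap L$ on the vector space $\fp_2$, as discussed in Section~\ref{sec:AC}) and $\textup K^{K\cap L}(\cO')\cong\Rep((K\cap L)^x)$ (since $\cO'\cong(K\cap L)/(K\cap L)^x$). Under these identifications the restriction of representations $\Rep(K\cap L)\to\Rep((K\cap L)^x)$ becomes the pullback $j^\ast\colon \textup K^{K\cap L}(\fp_2)\to \textup K^{K\cap L}(\cO')$ along the open embedding $j\colon \cO'\hookrightarrow\fp_2$ (the pullback of the trivial bundle $\cO(\fp_2)\otimes V$ has fiber $V|_{(K\cap L)^x}$ at $x$), and this pullback is surjective by the localization sequence (\ref{eqn:seq-2}).
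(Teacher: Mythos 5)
Your argument is correct, and its engine is the same as the paper's: surjectivity of $j^*$ in the localization sequence (\ref{eqn:seq-2}) for an open orbit in a vector space, together with the identifications $\textup{K}^{H}(V)\simeq\Rep(H)$ and $\textup{K}^{H}(H\cdot v)\simeq\Rep(H^{v})$ under which $j^*$ becomes restriction of representations. The difference is in the setup. The paper isolates this as a lemma about an arbitrary prehomogeneous vector space $(H,V)$ and applies it directly with $H=K\cap\bar{Q}$ and $V=\fp[2]$: the orbit $K\cap\bar{Q}\cdot x$ is open there with stabilizer $K^{x}\cap\bar{Q}=K^{x}$, both read off from the Kostant--Rallis facts (i)--(ii) quoted at the start of \S\ref{sec:ext}, so no Levi reduction is needed. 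You instead first pass to the reductive Levi factors, using that irreducible algebraic representations are trivial on the unipotent radicals (valid, and the two restriction maps are indeed intertwined, since both composites are restriction to $(K\cap L)^{x}$; fact (i) is what guarantees $(K\cap L)^{x}$ is a Levi factor of $K^{x}$), and then prove prehomogeneity of $(K\cap L,\fp_2)$ by hand via $[\fk\cap\fl,x]=\fp_2$. That equality is correct and standard, but your derivation can be streamlined: since $\gt$ anticommutes with $\ad(x)$ and $\ad(x):\fg_0\to\fg_2$ is surjective by $\f{sl}(2)$-theory, any $v\in\fp_2$ has a preimage $u=u_{\fk}+u_{\fp}$ in $\fg_0$, and comparing $\gt$-eigenspaces gives $v=\ad(x)u_{\fk}$; the sign-alternation discussion on isotypic components (which as stated is loose when the multiplicity spaces have dimension greater than one, since $\gt$ need not act by scalars on weight spaces) is not needed. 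What your route buys is self-containedness -- you use only fact (i), not the resolution of singularities (ii) -- at the cost of the extra Levi step and the graded $\f{sl}(2)$ computation; the paper's route is shorter because prehomogeneity of $(K\cap\bar{Q},\fp[2])$ is already packaged in the quoted facts, and the same lemma is then available verbatim.
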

This follows from a more general fact.
\begin{Lem} If $(H,V)$ is a prehomogeneous vector space and $H\cdot v$ is the open orbit, then restriction
$$\Rep(H)\to \Rep(H^v)$$
is surjective.
\end{Lem}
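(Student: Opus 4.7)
The plan is to apply the excision sequence in equivariant $\textup{K}$-theory (the sequence (\ref{eqn:seq-2})) to the open embedding $j:H\cdot v\hookrightarrow V$. Because $H\cdot v$ is open and dense, $Z:=V\smallsetminus H\cdot v$ is a closed $H$-stable subvariety, and (\ref{eqn:seq-2}) yields an exact sequence
$$
\textup{K}^H(Z)\xrightarrow{i_*}\textup{K}^H(V)\xrightarrow{j^*}\textup{K}^H(H\cdot v)\to 0.
$$
In particular, $j^*$ is surjective. What remains is to identify this $j^*$ with the restriction map $\Rep(H)\to\Rep(H^v)$.

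For the identification on the left, recall from the discussion preceding (\ref{eqn:vb}) that because $V$ is a vector space on which $H$ acts linearly, the trivial-bundle construction
$$
\Rep(H)\xrightarrow{\sim}\textup{K}^H(V),\qquad \sigma\mapsto[\cO(V)\otimes\sigma],
$$
is an isomorphism. For the identification on the right, the fact recalled in (\ref{eqn:vb}) (applied to the single orbit $H\cdot v = H/H^v$) gives
$$
\Rep(H^v)\xrightarrow{\sim}\textup{K}^H(H\cdot v),\qquad \tau\mapsto[\cE_{H\cdot v}(\tau)],
$$
where $\cE_{H\cdot v}(\tau)$ is the sheaf of sections of $H\underset{H^v}{\times}\tau$.

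Under these two identifications, the map $j^*$ sends $[\cO(V)\otimes\sigma]$ to its restriction $[\cO(H\cdot v)\otimes\sigma]$, i.e.\ to the sheaf of sections of the trivial $H$-equivariant bundle $(H\cdot v)\times\sigma\to H\cdot v$. Evaluating the $H$-equivariant structure $h\cdot(w,s)=(h\cdot w,\sigma(h)s)$ at the point $v$ shows that the fiber at $v$ is the $H^v$-representation $\sigma|_{H^v}$, so this trivial equivariant bundle coincides with $H\underset{H^v}{\times}\sigma|_{H^v}$. Hence the composition
$$
\Rep(H)\xrightarrow{\sim}\textup{K}^H(V)\xrightarrow{j^*}\textup{K}^H(H\cdot v)\xrightarrow{\sim}\Rep(H^v)
$$
is precisely the restriction map, which is therefore surjective.

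There is no real obstacle here: once the excision sequence is in place, the argument reduces to verifying the elementary compatibility between the two natural identifications of $\textup{K}$-groups with representation rings and the geometric pullback along $j$. The only point that deserves a word of care is the identification of the fiber at $v$ of the trivial equivariant bundle, which, as above, yields exactly $\sigma|_{H^v}$.
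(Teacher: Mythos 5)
Your proof is correct and follows essentially the same route as the paper: surjectivity of $j^*$ from the localization sequence (\ref{eqn:seq-2}), followed by the identifications $\textup{K}^H(V)\simeq\Rep(H)$ and $\textup{K}^H(H\cdot v)\simeq\Rep(H^v)$ under which $j^*$ becomes restriction of representations. The paper phrases the last identification by noting that $\cO(V)\otimes E_\gs$ restricts to $\cO(H/H^v)\otimes E_\gs\simeq \Ind_{H^v}^H(E_\gs|_{H^v})$, while you identify the fiber at $v$; these are the same observation.
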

\begin{proof} Let $j:H\cdot v\hookrightarrow V$ be the open embedding.  Then 
$$j^*:\textup{K}^H(V)\to \textup{K}^H(H\cdot v)$$
is surjective by (\ref{eqn:seq-2}).    Making the identifications $\textup{K}^H(V)\simeq \Rep(H)$ and $\textup{K}^H(H\cdot v)\simeq \Rep(H^v)$, the map $j^*$ (= restriction of sheaves) corresponds to restriction of representations.  This is because $(\gs,E_\gs)\in H^{\dual}$ corresponds to $\cO(V)\otimes E_\gs\in \textup{K}^H(V)$, so restricts to $\cO(H/H^v)\otimes E_\gs\simeq \Ind_{H^v}^H(E_\gs|_{H^v})$.
\end{proof}

Now Proposition \ref{prop:surj} follows since $(K\cap\bar{Q},\fp[2])$ is a prehomogeneous vector space with open orbit $K\cap\bar{Q}\cdot x$ having stabilizer $ K^x\cap \bar{Q}=K^x$.


\section{Proof of the main theorem}\label{sec:proof}
Continue with the hypothesis that the complex groups $G$ and $K$ have equal rank. Recall that we have fixed (i) a Cartan subalgebra $\fh$ of $\fg$ that is contained in $\fk$ and (ii) a positive system of roots $\Delta_c^+$ in $\Delta(\fh,\fk)$.   The Weyl dimension polynomial for $K$ is 
\begin{equation}\label{eqn:wdp}
P_K(\gl):=\prod_{\ga\in\Delta_c^+}\frac{\IP{\gl}{\ga}}{\IP{\rho_c}{\ga}}.
\end{equation}
   
Consider the $W$-representation $\C[W]\cdot P_K(\gl)$ generated by $P_K(\gl)$.  The Weyl group $W_K$ acts on $P_K(\gl)$ by the sign representation.  No copy of the sign representation occurs in $S(\fh)$ in degree less that $\#\gD_c^+=\deg(P_K(\gl))$.  The sign representation of $W_K$ occurs with multiplicity exactly one in this degree, therefore $\C[W]\cdot P_K(\gl)$ is irreducible (e.g., \cite[Prop. 11.2.3]{Carter85}).
  
 The Springer correspondence associates an irreducible representation of $W$ to each nilpotent $G$-orbit $\cO^\C=G\cdot x$ in $\cN$.  To describe the Springer correspondence, we let  $\fB$ be the flag variety for $G$, $\fB^x=\{\fb\in \fB : x\in \fb\}$ (a `Springer fiber') and  $A(x)=G^x/(G^x)_{\rm e}$ (the `component group' of the stabilizer of $x$).  Then $A(x)$ acts on $\fB^x$, so also acts on the singular cohomology $H^{\bullet}(\fB^x)$.  A linear action of $W$ on $H^{d_x}(\fB^x), d_x:=\dim(\fB^x),$ is constructed in \cite{Springer78}.  This action commutes with the action of $A(x)$, giving a representation of $W$ on the subspace of $A(x)$ invariants.  The $W$-representation on $\left(H^{d_x}(\fB^x)\right)^{A(x)}$ is irreducible.  This gives an assignment, which is in fact one-to-one,  of an irreducible $W$-representation to each complex nilpotent orbit:
$$
\cO^\C=G\cdot x\mapsto \pi(\cO^\C,1):=\left(H^{d_x}(\fB^x)\right)^{A(x)}.
$$
An irreducible representation of $W$ is said to be \emph{Springer} if it is equivalent to some $\pi(\cO^\C,1)$. 
It is an important fact that the symmetric algebra $S(\fh)$ contains a $W$-subrepresentation equivalent to $\pi(\cO^\C,1)$ in degree $d_x$ and contains no such subrepresentation in lower degrees (see \cite{Lusztig79}). 
The setting for our main theorem, Theorem \ref{thm:main} below, is a pair $(G,K)$ for which $\C[W]\cdot P_K(\gl)$ is Springer.  Not all pairs satisfy this requirement.  Tables of \S\ref{sec:const-classical} and \S\ref{sec:const-exceptional} together give a complete list of pairs $(G,K)$ for which the requirement holds.  Also listed on the tables are the (unique) orbits  $\cO^\C$ with $\C[W]\cdot P_K(\gl)=\pi(\cO^\C,1)$.   Since a $W$-representation equivalent to $\C[W]\cdot P_K(\gl)$ occurs in $S^{d_x}(\fh)$, $d_x$ is at most $\#\gD_c^+=\deg(P_K(\gl))$, and  since the sign representation of $W_K$ (a $W_K$-subrepresentation of $\C[W]\cdot P_K(\gl)$) occurs in no degree less that $\#\gD_c^+$, we conclude that $d_x=\#\gD_c^+$.   On the other hand  the general dimension formula for a Springer fiber \cite[Theorem 4.6]{Steinberg76} tells us that 
$$\#\gD_c^+ =d_x=\#\gD^+-\frac12\dim(\cO^\C).$$
Therefore,
\begin{equation}\label{eqn:dimO}
\frac12\dim(\cO^\C)=\#\gD^+-\#\gD_c^+=\#\gD_n^+,
\end{equation}
the number of positive noncompact roots.
 For the remainder of the article we assume that $(G,K)$ is one of the pairs on the tables and $\cO^\C$ is the orbit satisfying 
 \begin{equation}\label{eqn:spr}
 \pi(\cO^\C,1)=\C[W]\cdot P_K(\gl).
\end{equation}

List the real forms of $\cO^\C$ as $\cO_1,\dots,\cO_r$.  These are the $K$-orbits in $\cN_\gt$ for which $G\cdot\cO_k=\cO^\C$.  Now suppose $X$ is an irreducible Harish-Chandra module with $AV(\ann(X))=\bar{\cO^\C}$.  Such Harish-Chandra modules are those for which 
\begin{equation*}
AC(X)=\sum_{k=1}^r m_k(X)\bar{\cO}_k,
\end{equation*}
by \cite[Thm.~8.4]{Vogan91}.
If we interpret $m_k(X)$ to be $0$ when $AV(X)\subset\bar{\cO^\C}\smallsetminus\cO^\C$ (i.e., when $AV(\ann(X))\subset \bar{\cO^\C}\smallsetminus\cO^\C$), then our main theorem may be stated as the following.
\begin{Thm}\label{thm:main}  Let $\cO^\C$ be as in (\ref{eqn:spr}), then there exist integers $c_1,\dots,c_r$ so that 
\begin{equation}\label{eqn:main}
DI_p(X)=\sum_{k=1}^r c_km_k(X),
\end{equation} 
for all $X\in\bK(\fg,K)$ with $AV(\ann(X))\subset \bar{\cO^\C}$.
\end{Thm}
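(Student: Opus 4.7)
The plan is to reduce the polynomial identity to a computation in $\textup{K}^K(\cN_\gt)$ and then check it against the $\Z$-basis from Theorem \ref{thm:basis}. Both sides factor through the surjection $\GR \colon \bK(\fg,K) \to \textup{K}^K(\cN_\gt)$: the multiplicities $m_k$ do so by the very definition of the associated cycle in \S\ref{sec:AC}, while $DI_p$ does so because Proposition \ref{prop:22} shows that $DI_v$ depends only on the $K$-types of $X$, and the kernel of $\GR$ consists precisely of virtual modules whose $K$-multiplicities all vanish. First I would dispose of the degenerate case: if $AV(\ann(X))\subsetneq\bar{\cO^\C}$, then the GK dimension of $X$ is strictly less than $\#\gD_n^+=\#\gD^+-\#\gD_c^+$, so $DI_p(X)=0$ by Proposition \ref{prop:55}, while $m_k(X)=0$ by our convention, and the identity is trivial. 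So I may assume $AV(\ann(X))=\bar{\cO^\C}$ and expand
\[
\GR(X)=\sum_{k=1}^{r}\sum_{\tau}n_{k,\tau}(X)\,[\tilde{\cE}_{\cO_k}(\tau)]+\sum_{\cO\text{ smaller},\,\tau}n_{\cO,\tau}(X)\,[\tilde{\cE}_\cO(\tau)],
\]
so that $m_k(X)=\sum_\tau n_{k,\tau}(X)\dim(\tau)$.

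Next, for each basis element $[\tilde{\cE}_{\cO}(\tau)]$, I would use Proposition \ref{prop:surj} to extend $\tau$ to a virtual $(K\cap\bar{Q})$-representation $\gs$, and apply Proposition \ref{prop:ext} together with Corollary \ref{cor:ext} to realize an explicit lift $Z_{\cO,\tau}\in\bK(\fg,K)$ as an alternating sum of discrete series $\X(\gl)$. For the smaller-orbit lifts, $\GR(Z_{\cO,\tau})=[\tilde{\cE}_\cO(\tau)]$ is supported on $\bar\cO$ with $\dim\cO<\#\gD_n^+$, so (placing $Z_{\cO,\tau}$ in its natural coherent family obtained by varying the highest weight of $\gs$) the growth of $K$-multiplicities forces GK dimension $<\#\gD_n^+$, and Proposition \ref{prop:55} gives $DI_p(Z_{\cO,\tau})=0$. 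Consequently these terms contribute nothing to $DI_p(X)$, and I am reduced to analyzing the real-form lifts $Z_{k,\tau}$.

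For a real-form lift $Z_{k,\tau}$, Corollary \ref{cor:ext} combined with Proposition \ref{prop:di-ds} (which identifies $DI_p(\X(\gl))$ with $\pm P_K(\gl)$, the sign controlled by Remark \ref{rem:sign}) expresses $DI_p(Z_{k,\tau})$ as an explicit alternating sum of shifted Weyl dimension polynomials $P_K$, indexed by subsets $A\subset\gD_n^+(\fl)$ and $C\subset\gD(\fp_1)$. The heart of the proof, and the main obstacle, is to show that this alternating sum simplifies to $c_k\,\dim(\tau)\cdot P_K$ (up to a relabeling of the parameter), for some integer $c_k$ depending only on $\cO_k$. I would attack this by recognizing the Koszul-type cancellations in Corollary \ref{cor:ext} as producing the factor $\dim(\gs)=\dim(\tau)$ via the Weyl dimension formula for $K\cap L$, while the remaining universal polynomial lies in the Springer representation $\C[W]\cdot P_K\simeq\pi(\cO^\C,1)$, has degree $\#\gD_c^+=\deg P_K$, and transforms by the sign character of $W_K$; such a polynomial must be a $\Z$-multiple of $P_K$ (the distinguished generator of the sign-isotypic component in top degree), pinning down the integer $c_k$. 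Summing then gives $DI_p(X)=\sum_k c_k\sum_\tau n_{k,\tau}(X)\dim(\tau)=\sum_k c_k\,m_k(X)$, as required.
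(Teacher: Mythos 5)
Your reduction is the same as the paper's: factor both sides through $\GR$, discard the case $AV(\ann(X))\subsetneq\bar{\cO^\C}$ via Proposition \ref{prop:55}, expand $\GR(X)$ in the basis of Theorem \ref{thm:basis}, kill the smaller-orbit classes, and reduce to showing $DI_p\bigl(\tilde{\cE}_{\cO_k}(\tau)\bigr)=c_k\dim(\tau)$ using Propositions \ref{prop:surj}, \ref{prop:ext}, Corollary \ref{cor:ext} and Proposition \ref{prop:di-ds}. Up to that point the plan is sound (your GK-dimension argument for the smaller-orbit lifts is a reasonable fleshing-out of what the paper only asserts).

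But the step you yourself call the heart of the proof has a genuine gap, and your sketch of it is not correct as stated. After Corollary \ref{cor:ext} and Proposition \ref{prop:di-ds}, what must be proved is the polynomial identity (\ref{eqn:poly}): the alternating sum $\sum_{A,C}(-1)^{\#A+\#C}P_K(\gl-\rho_n(\fl)+2\rho(A)-2\rho(C))$, as a polynomial in the parameter $\gl$ attached to the highest weight of $\gs$, is a \emph{constant} multiple of $P_{K\cap L}(\gl)$ (degree $\#\gD_c^+(\fl)$); the factor $\dim\gs$ then comes from the Weyl dimension formula $P_{K\cap L}(\gl_\gs+\rho_c(\fl))=\dim\gs$. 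Your claim that the "remaining universal polynomial" has degree $\#\gD_c^+$, transforms by the sign character of $W_K$, and is therefore a multiple of $P_K$ misidentifies both the relevant Weyl group (it is $W_{K\cap L}$, not $W_K$) and the target polynomial (it is $P_{K\cap L}$, not $P_K$), and it supplies no argument for the crucial degree bound. The paper's argument is: the sum is obtained from $P_K$ by applying $\#\gD_n(\fl)+\#\gD(\fp_1)$ difference operators, each lowering degree by at least one, so its degree is at most $\#\gD_c^+-\#\gD_n^+(\fl)-\#\gD(\fp_1)$, and Lemma \ref{lem:degree} — which rests on $\tfrac12\dim\cO^\C=\#\gD_n^+$ (\ref{eqn:dimO}), i.e.\ precisely on the Springer hypothesis, together with the dimension count for the resolution (\ref{eqn:resolution}) — says this bound equals $\#\gD_c^+(\fl)=\deg P_{K\cap L}$; since the sum is $W_{K\cap L}$-alternating and no nonzero skew polynomial has smaller degree, it is $c\,P_{K\cap L}$. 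Without this degree/dimension count, $W_{K\cap L}$-skewness alone only gives divisibility by $P_{K\cap L}$ with a possibly $\gs$-dependent quotient, and constants $c_k$ independent of $\tau$ would not follow; indeed this is exactly where the Springer condition (\ref{eqn:spr}) is used, and the statement fails without it. Finally, note that the integrality of the $c_k$ is not obtained abstractly (the paper establishes existence of constants and verifies integrality by the explicit computations of Sections \ref{sec:const-classical}--\ref{sec:const-exceptional}), so your assertion that the multiple is automatically in $\Z$ also needs justification.
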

\begin{Rem} For the proof of the theorem we do not need to make a choice for $S^\pm$.  However, we will need to make some (arbitrary) choice when we compute the numerical values of the constants in \S\ref{sec:const-classical} and \S\ref{sec:const-exceptional}.
\end{Rem}

To prove the theorem we may assume that $X$ is irreducible.  This is because the Dirac index is additive and, although the  associated cycle is not additive, we have the following property.  If $AV(\ann(X_j))\subset \bar{\cO^\C}, j=1,2,3$ and $0\to X_1\to X_2\to X_3\to 0$ is exact, then $m_k(X_1)+m_k(X_3)=m_k(X_2)$.  We may also assume that $AV(\ann(X))= \bar{\cO^\C},$ since $AV(\ann(X))\subset\bar{\cO^\C}\smallsetminus \cO^\C$ implies both the Dirac index polynomial is zero (by Prop.~\ref{prop:55}) and $m_k(X)=0$.

We therefore assume that $X$ is irreducible and $AV(\ann(X))=\bar{\cO^\C}$. Write $\GR(X)$ in terms of the basis (\ref{eqn:basis}):
\begin{equation*}
\GR(X) =\sum_{k=1}^r\sum_l n_{kl} [\tilde{\cE}_{\cO_k}(\tau_{kl})]+\sum_{\cO\subset\bar{\cO^\C}\smallsetminus \cO^\C}n_{\cO,\tau}[\tilde{\cE}_\cO(\tau)].
\end{equation*}
Then since the Dirac index is defined on $\textup{K}^K(\cN_\gt)$ and vanishes on $\tilde{\cE}_\cO(\tau)$ when $\cO\subset\bar{\cO^\C}\smallsetminus \cO^\C$, we have
\begin{equation*}
DI_p(X)=\sum_{k=1}^r\sum_l n_{kl} DI_p\left(\tilde{\cE}_{\cO_k}(\tau_{kl})\right).
\end{equation*}
On the other hand 
\begin{equation*}
AC(X)=\sum_{k=1}^r \left( \sum_l n_{kl} \dim(\tau_{kl})\right)\,\bar{\cO}_k.
\end{equation*}
Therefore, Theorem \ref{thm:main} is a consequence of the following.
\begin{Thm}\label{thm:main-2} With $\cO_1,\dots,\cO_r$ as above, there are integers $c_k, k=1,\dots,r$  so that 
\begin{equation}\label{eqn:2}
DI_p\left(\tilde{\cE}_{\cO_k}(\tau)\right)=c_k\dim(\tau)
\end{equation}
for all $\tau\in(K^{x_k})^{\dual}$.
\end{Thm}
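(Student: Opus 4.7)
The plan is to combine the explicit extension formula of Corollary~\ref{cor:ext} with the Dirac index formula for discrete series (Proposition~\ref{prop:di-ds}) to reduce~\eqref{eqn:2} to a polynomial identity on $\fh^*$. First I will use Proposition~\ref{prop:surj} together with the additivity of both sides of~\eqref{eqn:2} in $\tau$ to reduce to the case $\tau = \gs|_{K^{x_k}}$, with $\gs$ an irreducible representation of $K\cap\bar Q$ and $\fq = \fl+\fu$ the $\gt$-stable parabolic built from a standard triple containing $x_k$. Since $K\cap\bar U$ acts trivially on the irreducible $\gs$, one may view $\gs$ as an irreducible $K\cap L$-module of highest weight $\gl_\gs\in\fh^*$, and $\dim(\gs) = P_L(\gl_\gs + \rho_c(\fl))$ is the Weyl dimension polynomial of $K\cap L$.

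Corollary~\ref{cor:ext} then expresses $[\tilde{\cE}_{\cO_k}(\gs|_{K^{x_k}})]\in\bK(\fg,K)$ as an alternating sum
$$\sum_{A\subset\gD_n^+(\fl),\;C\subset\gD(\fp_1)}(-1)^{\#A+\#C}\,[\X(\gl_\gs + v_{A,C})]$$
in which $v_{A,C} := \rho_c(\fl)-\rho_n(\fl)+2\rho(A)-2\rho(C)+\rho(\fu)-2\rho(\fp\cap\fu)$ depends on $A,C$ but not on $\gs$. Applying $DI_p$, additive by Proposition~\ref{prop:33}, and using $DI_p(\X(\gl)) = \eps\,P_K(\gl)$ with a fixed sign $\eps=\pm1$ determined by the choice of $S^\pm$ (Proposition~\ref{prop:di-ds} and Remark~\ref{rem:sign}), I obtain
$$DI_p\bigl(\tilde{\cE}_{\cO_k}(\gs|_{K^{x_k}})\bigr) \;=\; \eps\sum_{A,C}(-1)^{\#A+\#C}\,P_K(\gl_\gs + v_{A,C}),$$
which I will regard as a polynomial function of $\gl_\gs$.

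The central step is then to prove the polynomial identity
$$\sum_{A,C}(-1)^{\#A+\#C}\,P_K(\gl_\gs+v_{A,C}) \;=\; (\eps c_k)\,P_L(\gl_\gs+\rho_c(\fl))$$
for an integer $c_k$ independent of $\gs$. The sum over $C$ is the iterated difference operator $\prod_{\ga\in\gD(\fp_1)}(1-T_{-2\ga})$ applied to $P_K$, and the sum over $A$ is $\prod_{\ga\in\gD_n^+(\fl)}(1-T_{2\ga})$; each factor lowers the polynomial degree by one in its own root direction. The Springer hypothesis~\eqref{eqn:spr}, the dimension equation~\eqref{eqn:dimO}, and the birationality of the resolution $\mu$ should together force the numerical matching $\#\gD(\fp_1)+\#\gD_n^+(\fl) = \#\gD_c^+ - \#\gD_c^+(\fl)$, so that the alternating sum has degree at most $\deg P_L = \#\gD_c^+(\fl)$. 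Identifying it as a scalar multiple of $P_L(\gl_\gs+\rho_c(\fl))$ should follow by a leading-coefficient analysis, naturally phrased at the character level via the Koszul resolutions of $\wedge^\bullet\fp_1^*$ and of the positive noncompact nilradical in $\fl$. The principal obstacle is this last polynomial identity: establishing both the correct degree drop and the explicit integer value of $c_k$, and verifying independence of $\gs$. Once done, integrality of $c_k$ follows by evaluating at any $\gl_\gs$ for which both sides are manifestly integer-valued, and the theorem results by linearity in $\tau$.
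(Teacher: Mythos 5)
Your overall strategy coincides with the paper's: reduce via Proposition \ref{prop:surj} and additivity to an irreducible $\gs\in(K\cap L)\dual$, apply Corollary \ref{cor:ext}, use additivity of the Dirac index and $DI_p(\X(\gl))=\pm P_K(\gl)$ with a $\gl$-independent sign, and thereby reduce (\ref{eqn:2}) to a polynomial identity in $\gl_\gs$; and the numerical matching you hope for, $\#\gD(\fp_1)+\#\gD_n^+(\fl)=\#\gD_c^+-\#\gD_c^+(\fl)$, is exactly Lemma \ref{lem:degree}, proved just as you guess from $\dim\cO=\#\gD^+-\#\gD_c^+$ (the Springer hypothesis via (\ref{eqn:dimO})) together with $\dim\cO=\dim(\fu)-\dim(\fp_1)$ coming from the resolution (\ref{eqn:resolution}). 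So up to the degree bound your outline is sound and follows the paper's route.

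The genuine gap is at the step you yourself flag as the principal obstacle, and it is not merely a computation left to do: a degree bound alone does not identify the alternating sum $\sum_{A,C}(-1)^{\#A+\#C}P_K(\gl+\cdots)$ as a scalar multiple of $P_{K\cap L}$, and a ``leading-coefficient analysis'' has nothing structural to grab onto. The missing idea in the paper's argument is skew-invariance: after substituting $\gl_\gs\mapsto\gl-\rho_c(\fl)-\rho(\fu)-2\rho(\fp\cap\fu)$ (harmless because $\rho(\fu)$ and $\rho(\fp\cap\fu)$ are orthogonal to $\gD_c(\fl)$), the left-hand side of (\ref{eqn:poly}) is alternating under $W_{K\cap L}$, and any nonzero $W_{K\cap L}$-alternating polynomial has degree at least $\#\gD_c^+(\fl)$, the alternating polynomials in that minimal degree being exactly the multiples of $P_{K\cap L}$. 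Only the combination of this alternation with the degree drop from the difference operators forces the identity and produces the constant $c_k$; without it your central polynomial identity remains unproven. A secondary point: your integrality argument is too quick. Evaluating at a point where both sides are integers gives only $c_k\in\mathbb{Q}$ unless $P_{K\cap L}$ takes the value $\pm1$ there, and the shifts by $\rho_n(\fl)$, $\rho(\fu)$, etc.\ need not be integral, so the values of $P_K$ need not be obviously integral; indeed the paper does not prove integrality abstractly at this point but defers it to the explicit determination of the constants in Sections \ref{sec:const-classical} and \ref{sec:const-exceptional} and in \cite{MehdiPandzicVoganZierau17b}.
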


To prove this theorem we take an arbitrary $\cO_k=\cO=K\cdot x$ and $\tau\in {K^x}^{\dual}$ and extend $\tau$ to a virtual representation $\gs$ of $K\cap L$.  We may assume that $\gs$ is irreducible, then use the explicit extension given in Proposition \ref{prop:ext}.  The formula for the Dirac index of the discrete series will then relate the two sides of (\ref{eqn:2}).

Therefore it suffices to prove that for each $K$-orbit in $\cN_\gt$ there is a constant $c$ so that 
\begin{equation*}
DI_p\left(\mu_*(\cS_\gs)\right)=c\hspace{.5pt}\dim(\gs), \text{ for all }\gs\in (K\cap L)\dual.
\end{equation*}

Letting $\gl_\gs$ be the highest weight of $\gs$, by Corollary \ref{cor:ext} we need a constant $c$ so that $c\hspace{.5pt}\dim(\gs)$ is 
\begin{equation*}
  \sum_{\substack{A\subset\gD_n^+(\fl)  \\  C\subset\gD(\fp_1)}} (-1)^{\#A+\#C}DI_p\left(\X(\gl_\gs+\rho_c(\fl)-\rho_n(\fl)+2\rho(A)-2\rho(C)+\rho(\fu)-2\rho(\fp\cap\fu))\right).  
\end{equation*}

Since the formula for the Dirac index polynomial of a discrete series representation is given by $DI_p(\X(\gl))=\pm P_K(\gl)$ (by Prop.~\ref{prop:di-ds}) and the sign depends on $\fb$ (not on $\gl$) by Remark \ref{rem:sign}, this becomes 
\begin{equation*}
  \sum_{\substack{A\subset\gD_n^+(\fl)  \\  C\subset\gD(\fp_1)}} (-1)^{\#A+\#C}P_K(\gl_\gs+\rho_c(\fl)-\rho_n(\fl)+2\rho(A)-2\rho(C)+\rho(\fu)-2\rho(\fp\cap\fu))=c\hspace{.5pt}P_{K\cap L}(\gl_\gs+\rho_c(\fl))
\end{equation*}
up to sign.  Substituting $\gl_\gs$ by $\gl-\rho_c(\fl)-\rho(\fu)-2\rho(\fp\cap\fu)$ this polynomial equation is 
\begin{equation}\label{eqn:poly}
   \sum_{A\subset\gD_n^+,C\subset\gD(\fp_1)} (-1)^{\#A+\#C} P_K(\gl-\rho_n(\fl)+2\rho(A)-2\rho(C))=c\hspace{.5pt}P_{K\cap L}(\gl),
\end{equation}
(since $\rho(\fu), \rho(\fp\cap\fu) \perp\gD_c(\fl)$).

The left-hand side of equation (\ref{eqn:poly}) may be viewed as a sequence of difference operators applied to $P_K(\gl)$.   Define these difference operators by
\begin{equation*}
D_\gb(P(\gl)):=\begin{cases}  P(\gl)-P(\gl+\gb),   &  \gb\in \gD(\fp_1)  \\
                                                 P(\gl+\frac{\gb}{2})-P(\gl-\frac{\gb}{2}),  & \gb\in\gD_n(\fl).
                         \end{cases}
\end{equation*}
These difference operators each lower degree by at least one.  The left-hand side of (\ref{eqn:poly}) is exactly
\begin{equation*}
(-1)^{\#\gD_n(\fl)}\prod_{\gb\in\gD_n(\fl)\cup\gD(\fp_1)}D_\gb(P_K(\gl)),
\end{equation*}
which therefore has degree at most $\deg(P_K(\gl))-\#\gD_n(\fl)-\#\gD(\fp_1)$.  Lemma \ref{lem:degree} below tells us that this degree is at most $\deg(P_{K\cap L}(\gl))=\#\gD_c^+(\fl)$.  However, the left-hand side of (\ref{eqn:poly}) is alternating with respect to the action of $W_{K\cap L}$.   The least degree of such a (nonzero) alternating polynomial is $\deg(P_{K\cap L}(\gl))$.  Therefore, such a polynomial must be a multiple of  $P_{K\cap L}(\gl)$.  Thus there exists some constant $c$ so that (\ref{eqn:poly}) holds.  

The following Lemma now completes the proof that constants $c_k$ as in the theorem exist;  the integrality of the constants holds, but this is to be established by computing the values of the constants; see sections \ref{sec:const-classical} and  \ref{sec:const-exceptional} and \cite{MehdiPandzicVoganZierau17b}. 
\begin{Lem}\label{lem:degree}
 $\#\gD_c^+-\#\gD_c^+(\fl)-\#\gD_n^+(\fl)=\#\gD(\fp_1)$.
\end{Lem}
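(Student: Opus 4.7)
My plan is to translate the lemma into a dimension identity that will follow from computing $\dim\cO$ in two different ways.

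First I would rewrite both sides of the claimed equality.  Applying root negation $\ga\mapsto-\ga$ separately to compact and noncompact roots outside $\fl$ yields $\dim(\fk\cap\fu)=\dim(\fk\cap\bar\fu)$ and $\dim(\fp\cap\fu)=\dim(\fp\cap\bar\fu)$.  Consequently
\begin{equation*}
\#\gD_c^+-\#\gD_c^+(\fl)=\tfrac12\bigl(\#\gD_c-\#\gD_c(\fl)\bigr)=\#\gD_c(\bar\fu)=\dim(\fk\cap\bar\fu),
\end{equation*}
while $\#\gD_n^+(\fl)=\tfrac12\dim(\fp\cap\fl)$ and $\#\gD(\fp_1)=\dim\fp_1$.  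So the lemma is equivalent to the dimension identity
\begin{equation*}
\dim(\fk\cap\bar\fu)=\tfrac12\dim(\fp\cap\fl)+\dim\fp_1. \qquad (\star)
\end{equation*}

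The crucial input is that $\dim\cO=\#\gD_n^+$: since $\cO=\cO_k$ is a real form of the Springer orbit $\cO^\C$, equation \eqref{eqn:dimO} gives $\dim\cO^\C=2\#\gD_n^+$ and hence $\dim\cO=\tfrac12\dim\cO^\C=\#\gD_n^+$.  Splitting the noncompact roots by $\ad(h)$-weight (and once more using $\dim(\fp\cap\fu)=\dim(\fp\cap\bar\fu)$), this gives
\begin{equation*}
\dim\cO=\tfrac12\dim(\fp\cap\fl)+\dim(\fp\cap\bar\fu).
\end{equation*}
On the other hand, the Kostant-Rallis resolution \eqref{eqn:resolution} $\mu:K\underset{K\cap\bar Q}{\times}\fp[2]\to\bar\cO$ is birational and therefore dimension-preserving, so
\begin{equation*}
\dim\cO=\dim K-\dim(K\cap\bar Q)+\dim\fp[2]=\dim(\fk\cap\bar\fu)+\dim(\fp\cap\bar\fu)-\dim\fp_1,
\end{equation*}
where the final step uses $\dim(\fk\cap\fu)=\dim(\fk\cap\bar\fu)$ together with the decomposition $\fp\cap\bar\fu=\fp_1\oplus\fp[2]$.

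Equating the two expressions for $\dim\cO$ and cancelling $\dim(\fp\cap\bar\fu)$ yields $(\star)$, proving the lemma.  The argument is essentially bookkeeping; the only conceptual obstacle is recognizing that the Springer hypothesis is equivalent to $\dim\cO=\tfrac12\dim\cO^\C$ and that this is precisely the ingredient needed.  Without it the identity can fail for smaller orbits in $\bar{\cO^\C}\smallsetminus\cO^\C$, but for those the Dirac index vanishes anyway so the lemma is not required there.
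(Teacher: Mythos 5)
Your proof is correct and follows essentially the same route as the paper: both arguments compute $\dim(\cO)$ in two ways, once as $\#\gD_n^+$ using the Springer condition (\ref{eqn:dimO}) together with the fact that a real form has half the dimension of $\cO^\C$, and once from the Kostant--Rallis resolution (\ref{eqn:resolution}) as $\dim(K/K\cap\bar Q)+\dim\fp[2]$, and then compare; your rearrangement into the identity $(\star)$ is just a different bookkeeping of the same root count. The only quibble is the closing phrase ``the Springer hypothesis is equivalent to $\dim\cO=\tfrac12\dim\cO^\C$'': the latter holds for any real form of any complex orbit, while the Springer hypothesis is what gives $\tfrac12\dim\cO^\C=\#\gD_n^+$ -- but your argument uses these facts correctly in the body.
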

\begin{proof} We have already noted that $\dim(\cO)=\#\gD^+-\#\gD_c^+$.  The resolution (\ref{eqn:resolution}) gives
\begin{equation}\label{eqn:dimnext}
\begin{split}
\dim(\cO)&=\dim(K/K\cap\bar{Q})+\dim(\fp[2])  \\
&=\dim(\fu\cap \fk)+\dim(\fu\cap \fp)-\dim(\fp_1)  \\
&=\dim(\fu)-\dim(\fp_1).
\end{split}
\end{equation}
Therefore, $\#\gD(\fu)- 
\#
\gD(\fp_1)=\#\gD^+-\#\gD_c^+$, i.e., $\#\gD_c^+-\#\gD_n^+(\fl)-\#\gD(\fp_1)=\#\gD_c^+(\fl).$
\end{proof}

\section{Computations: the classical cases}\label{sec:const-classical}

The computations of the constants are somewhat involved and details will appear in \cite{MehdiPandzicVoganZierau17b}.  Here we first list (in Table 1) all of the classical real groups for which the conjecture applies.  That is all groups with $\rank(G)=\rank(K)$ and $\C[W]\cdot P_K(\gl)=\pi(\cO^\C,1)$, as described in Section \ref{sec:proof}.  We also give the orbit $\cO^\C$ and the number of real forms (i.e., the number of constants).  Then we give one `sample' computation of the constants.

Recall that complex nilpotent orbits in classical Lie algebras are in
one-to-one correspondence with certain partitions  
$\lbrack d_1,\cdots,d_r\rbrack$ with $d_1\geq d_2\geq\cdots\geq
d_r\geq 1$ (if $d_j$ occurs $m$ times, we will simply write $d_j^m$ instead of $[\dots d_j,\dots ,d_j,\dots]$).  The partitions satisfy the following 

\begin{itemize}
\item[$\bullet$] $d_{1}+d_{2}+\cdots+d_{r}=n$, when
  ${\mathfrak g}\simeq{\mathfrak s}{\mathfrak l}(n,{\C})$; 
\item[$\bullet$] $d_{1}+d_{2}+\cdots+d_{r}=2n+1$ and the even $d_j$
  occur with even multiplicity, when ${\mathfrak g}\simeq{\mathfrak s}{\mathfrak o} (2n+1,{\C})$; 
\item[$\bullet$] $d_{1}+d_{2}+\cdots+d_{r}=2n$ and the odd $d_j$ occur
  with even multiplicity, when ${\mathfrak g}\simeq{\mathfrak s}{\mathfrak p}(2n,{\C})$; 
\item[$\bullet$] $d_{1}+d_{2}+\cdots+d_{r}=2n$ and the even $d_j$
  occur with even multiplicity, when ${\mathfrak g}\simeq{\mathfrak s}{\mathfrak o} (2n,{\C})$, except that the partitions having all the $d_j$ even are each associated to two orbits. 
\end{itemize}
See  \cite[Chapter 5]{CollingwoodMcGovern93}.   

It will be convenient to attach to each partition $\lbrack d_1,d_2,\cdots,d_r\rbrack$  a Young tableau, i.e., a left-justified arrangement of empty boxes in rows having $d_i$ boxes in the $i$th row and $d_1$, $d_2$, ..., $d_r$ in non-increasing order.  The real forms will have $\pm$ signs in the boxes.

Let $\gs_K$ denote the $W$-representation generated by the Weyl dimension polynomial $P_K(\gl)$  for $K$.  To check whether $\sigma_K$ is a Springer representation for the classical groups, we proceed
as follows (see \cite[Chapters 11 and 13]{Carter85}): 
\begin{itemize}
\item[(i)] we identify $\sigma_K$ as a Macdonald representation;
\item[(ii)] we compute the symbol of $\sigma_K$ 
as in \cite[Section 13.3]{Carter85};
\item[(iii)] we write down the partition associated with this symbol;
\item[(iv)] we check whether the partition corresponds to a complex
  nilpotent orbit. 
\end{itemize} 
For example, when $G_\R=SU(p,q)$, with $q\geq p\geq 1$, the Weyl group
$W$ is the symmetric group $S_{p+q}$, and $W_K$ can be identified with the subgroup $S_p\times S_q$. The representation
$\sigma_K$ is parametrized, as a Macdonald representation, by the
partition  $\lbrack 2^p,1^{q-p}\rbrack$ (see \cite{Macdonald72} or Proposition 11.4.1 in
\cite{Carter85}). This partition corresponds to a $2pq$-dimensional
nilpotent orbit,  so $\sigma_K$ is Springer. Note that when ${\mathfrak g}$ is 
of type $A_n$, there is no symbol to compute, and any irreducible representation of
$W$  is a Springer representation.   This gives the first entry on our table.

When $G_\R=\Sp (p,q)$, with $q\geq p\geq 1$, the Weyl group $W_K$ is generated by a root subsystem of type $C_p\times C_q$ so that
$\sigma_K$ is parametrized by the pair  
 of partitions $(\lbrack \alpha\rbrack,\lbrack\beta\rbrack)=(\lbrack
 \emptyset\rbrack,\lbrack2^p,1^{q-p}\rbrack)$. The symbol of $\sigma_K$ is the array  
%
\begin{align*}
&\begin{pmatrix}
0& &2& &\cdots& &2(q-p)& &\cdots& &2q\cr
&2& &4&\cdots&2(q-p)& & 2(q-p)+3&\cdots&2q+1&
\end{pmatrix},\text{ if }q>p\text{ and}\\ \\
&\begin{pmatrix}
0&&2&&4&&\cdots & & 2q \cr
&3&&5&&7&\cdots &2q+1&
\end{pmatrix},\text{ if }q=p.
\end{align*}
In both cases, the partition of $2(p+q)$ associated with this symbol does not correspond to a complex nilpotent orbit for ${\mathfrak s}{\mathfrak p}(2(p+q),{\C})$, i.e., $\sigma_K$ is not a Springer representation (see
\cite{Lusztig79} or  \cite[Proposition 11.4.3 and Section 13.
3
]{Carter85}).

\medskip
\begin{table}[H]
\scalebox{1.0}{\mbox{

\setlength{\extrarowheight}{6pt}
\begin{tabular}{|l|c|c|c|}
\hline
  \hspace{15pt}$G_\R$  & $\dim(\Cal O_k)$ &  $\Cal O^\C$ & $\#$ real forms  \\[2pt]
\hline
$SU(p,q)$, $q\geq p\geq 1$ & $2pq$ & $\lbrack 2^p,1^{q-p}\rbrack$ &  $p+1$ \\[2pt]
\hline
 
    &   &  & $3$ if $q>p-1$    \\
\raisebox{12pt}{$SO_{e}(2p,2q+1), q\geq p-1\geq 0$}    & \raisebox{12pt}{$2p(2q+1)$} &  \raisebox{12pt}{$\lbrack 3,2^{2p-2},1^{2(q-p)+2}\rbrack$} & $2$ if $q=p-1$ \\[2pt]
\hline

$\Sp (2n,\R), n\geq 1$  &  $n(n+1)$    &  $\lbrack 2^n\rbrack$    &  $n+1$   \\[2pt]
\hline

 &    &  $\lbrack 2^{n}\rbrack, n$ even &  $\frac{n}{2}+1$\\[2pt]
\raisebox{12pt}{$SO^\star(2n), n\geq 1$}        &               \raisebox{12pt}{$n(n-1)$ }            &  $\lbrack 2^{n-1},1^2\rbrack, n$ odd & $ \frac{n+1}{2}$ \\[2pt]
\hline

       &    &   & $3$ if $q>p$    \\[2pt]
\raisebox{12pt}{$SO_e(2p,2q), q\geq p\geq 1 $}   & \raisebox{12pt}{$4pq$}      &  \raisebox{12pt}{$\lbrack 3,2^{2p-2},1^{2(q-p)+1}\rbrack$}         &  $4$ if $q=p$  \\[2pt]
\hline

\end{tabular}

}}
\caption{}
\end{table}

Recall that our proof of Theorem \ref{thm:main} did not require a choice of $S^\pm$.
However, to determine  numerical values  of the constants we do need to make a choice of $S^\pm$.   This is done  using the positive system $\gD^+$ of the Dynkin diagram as in the discussion surrounding (\ref{eqn:wts}).  This gives a $\gD_c^+$, which in turn gives $P_K(\gl)$, which we fix.  We may choose $\gD^+(\fl)=\gD(\fl)\cap\gD^+$ and set $\fb=\fh+\sum_{\ga\in\gD^+(\fl)} \fg^{(\ga)}+\fu$.  Then  $\fb\subset \fq$ (as required in \S\ref{sec:ext}) and 
\begin{equation}\label{eqn:sign}
DI_p(X_\fb(\gl))=(-1)^NP_K(\gl), \, N=\#\left(\gD^+\smallsetminus \gD(\fb)\right)
\end{equation}
(as in Rem. \ref{rem:sign}).
Observe that $N$ is the number of positive roots that are positive on $h$.  Note also that $\gD^+(\fl)$ give the $\rho_n(\fl)$ in the formula (\ref{eqn:poly}).

As an example of the computation of the constants, we assume $q\geq p$ consider $G_{\R}=SU(p,q)$ and $K_{\R}=S(U(p)\times U(q))$, that is $(G,K)=(SL(n),S(GL(p)\times GL(q))$, $n=p+q$.  We let $\fh$ be the diagonal Cartan subalgebra and write $(a_1,a_2,\cdots,a_{p+q})$ instead of the diagonal matrix $\text{diag}(a_1,a_2,\cdots,a_{p+q})$.  As usual write the roots of $\fh$ in $\fg$ as $\eps_i-\eps_j$, $i\neq j$, where $\eps_i(a_1,\dots,a_n)=a_i$.  The set of roots in $\fk$ is $\gD_c=\{\eps_i-\eps_j: 1\leq i\neq j\leq p\text{ or }p+1\leq i\neq j\leq n\}$.  Let $\gD^+=\{\eps_i-\eps_j:1\leq i<j \leq n\}$.  

The orbit $\cO^\C$ corresponding to the partition $\lbrack 2^p,1^{q-p}\rbrack$ has $p+1$ real forms $\cO_1,\dots,\cO_{p+1}$.  These may be described by signed tableau having $k$ rows, $k=0,1,\dots,p$, beginning with $+$ and $p-k$ rows beginning with $-$ :

\bigskip
$$ \setlength{\extrarowheight}{2pt}\begin{array}{l}
  \left.  \begin{tabular}{|c|c|}
                        \hline
                     +   & $-$\\ [3pt]
                       \hline
                     $\vdots$   &  $\vdots$  \\ [5pt]
                        
                        \hline
                     +   & $-$\\ [3pt]
\end{tabular} \right\}k \\
   \left.   \begin{tabular}{|c|c|}
                       \hline
                     $-$   & +\\ [3pt]
                         \hline
                         
                     $\vdots$    &  $\vdots$  \\ [5pt]
                         \hline
                    $-$   & +\\ [3pt]
                         \hline
\end{tabular}\right\}p-k  \\
  \left.    \begin{tabular}{|c|c|}
                       $-$  \\ [3pt] \cline{1-1} 
                         
                     $\vdots$     \\ [5pt]  \cline{1-1}
                       $-$  \\ [3pt] \cline{1-1} 
                \end{tabular}\right\}q-p
\end{array}
$$
or by the $h$'s of the standard triple:
\begin{equation*}
h_k=(\unb{k}{1,\dots,1},\,\unb{p-k}{-1,\dots,-1}\,|\,\unb{p-k}{1,\dots,1},\,\unb{q-p}{0,\dots,0},\,\unb{k}{-1,\dots,-1}).
\end{equation*}

Since $\fl=\fl_k$ is built from roots that vanish on $h_k$, we have that
\begin{align*}
\Delta^+_n(\fl)=\{\eps_i-&\eps_j\,\big|\, 1\leq i\leq k,\, p+1\leq j\leq 2p-k\} \\ 
&\cup\,\{\eps_i-\eps_j\,\big|\, k+1\leq i\leq p,\, p+q-k+1\leq j\leq p+q\}.
\end{align*}
It follows that for any $A\subseteq\Delta^+_n(\fl)$,
\[
2\rho(A)=(a_1,\dots,a_k,a_1',\dots,a_{p-k}'\,|\,-b_1,\dots,-b_{p-k},0,\dots,0,-b_1',\dots,-b_k'),
\]
with 
\begin{align}
\label{eqn:ineqA}
&0\leq a_i\leq p-k, \quad 0\leq b_j\leq k,\quad \sum_ia_i=\sum_jb_j\quad \text{ and}\\ \nonumber
&0\leq a_i'\leq k, \quad 0\leq b_j'\leq p-k,\quad \sum_ia_i'=\sum_jb_j'.
\end{align}
Furthermore, the set $\Delta(\fp_1)$ of noncompact roots that are 1 on $h_k$ is empty if $q=p$, and for $q>p$,  
\begin{align*}
\Delta(\fp_1)=\{\eps_i&-\eps_j\,\big|\, 1\leq i\leq k,\, 2p-k+1\leq j\leq p+q-k\} \\ 
&\cup\,\{\eps_i-\eps_j\,\big|\, 2p-k+1\leq i\leq p+q-k,\, k+1\leq j\leq p\}.
\end{align*}
It follows that for any $C\subseteq\Delta(\fp_1)$,
\[
2\rho(C)=(c_1,\dots,c_k,-d_1,\dots,-d_{p-k}\,|\,0,\dots,0,-e_1,\dots,-e_{q-p},0,\dots,0),
\]
with 
\begin{equation}
\label{eqn:ineqC}
0\leq c_i,d_j\leq q-p \quad\text{and}\quad -(p-k)\leq e_i\leq k.
\end{equation}

Making the choice of $S^\pm$ described above, equation (\ref{eqn:poly}) becomes
\begin{equation}\label{eqn:C1}
 (-1)^N\sum_{A\subset\gD_n^+,C\subset\gD(\fp_1)} (-1)^{\#A+\#C} P_K(\gl-\rho_n(\fl)+2\rho(A)-2\rho(C))=c\hspace{.5pt}P_{K\cap L}(\gl),
\end{equation}
with $N=kq+(p-k)(q-p+k)+(q-p)k$.  It turns out the computations below are somewhat easier if we make the following manipulation of (\ref{eqn:C1}).  First observe that $-\rho_n(\fl)+2\rho(A)=\rho_n(\fl)-2\rho(\gD_n^+(\fl)\smallsetminus A)$, for any $A\subset\gD_n^+(\fl)$.  Then since $(-1)^{\#A}=(-1)^{\#\gD_n^+(\fl)}(-1)^{\#(\gD_n^+(\fl)\smallsetminus A)}$, equation (\ref{eqn:C1}) becomes
\begin{equation}\label{eqn:C3}
 (-1)^{N+\#\gD_n^+(\fl)}\sum_{A\subset\gD_n^+,C\subset\gD(\fp_1)} (-1)^{\#A+\#C} P_K(\gl-2\rho(A)-2\rho(C))=c\hspace{.5pt}P_{K\cap L}(\gl),
\end{equation}
where we have replaced $\gl$ by $\gl-\rho_n(\fl)$ (noting that $\rho_n(\fl)$ is orthogonal to roots in $\gD_c(\fl)$).

To determine $c$ we evaluate both sides of (\ref{eqn:C3}) at 
\[
\lambda_0=(q,q-1,\dots,q-k+1,p,p-1,\dots,k+1\,|\, p-k,\dots,1,q-k,\dots,p-k+1,k,\dots,1).
\]
Note that  $P_{K\cap L}(\lambda_0)=1$, because $\lambda_0$ differs from $\rho_c(\fl)$ by a weight orthogonal to all roots in $\gD_c(\fl)$.  Set $\Lambda=\lambda_0-2\rho(A)-2\rho(C)$. 

We see that $\Lambda_{p+1},\dots,\Lambda_n$ are $q$ integers lying between $1$ and $q$.   In particular,  note that (\ref{eqn:ineqA}) and (\ref{eqn:ineqC}) imply that $\gL_{2p-k+l}\leq q-l+1, l=1,2,\dots, q-p$  and (\ref{eqn:ineqA}) implies that all  $\gL_{2p+j}, j=1,2,\dots, p-k$ and $\gL_{n-i},i=1,2,\dots, k$ , are at most $p$.   If $P_K(\Lambda)\neq 0$, then  $(\Lambda_{p+1},\dots,\Lambda_n)$ must be a permutation of $(1,\dots,q)$. If $q>p$, then the only possible way to get $p+1,\dots,q-1,q$ is 
\[
\Lambda_{2p-k+1}=q,\ \Lambda_{2p-k+2}=q-1,\ \dots,\ \Lambda_{p+q-k}=p+1.
\]
It follows that $e_1=\dots=e_{q-p}=k$, and hence $c_1=\dots=c_k=q-p$, and $d_1=\dots=d_{p-k}=0$. We conclude that for $q>p$ there is exactly one 
$C\subseteq\Delta(\fp_1)$ for which $P_K(\Lambda)$ can be nonzero: 
\[
C=\{\eps_i-\eps_j\,\big|\, 1\leq i\leq k \text{ and } 2p-k+1\leq j\leq p+q-k\}.
\]
The corresponding $2\rho(C)$ is $(q-p,\dots,q-p,0,\dots,0\,|\, 0,\dots,0,-k,\dots,-k,0,\dots,0)$, and $\#C=k(q-p)$.
This also covers the case $q=p$; in that case, 
$\Delta(\fp_1)=\emptyset$, so $C=\emptyset$ and $2\rho(C)=0$. It follows that 
\begin{equation*}\begin{split}
\Lambda&=
({p-a_1},{p-1-a_2},\dots,{p-k+1-a_k},\, p-a_1',{p-1-a_2'},\dots,{k+1-a_{p-k}'}\,|\, \\
&{p-k+b_1},{p-k-1+b_2},\dots,{1+b_{p-k}},\,q,\dots,p+1,\,{k+b_1'},{k-1+b_2'},\dots,{1+b_k'}).
\end{split}\end{equation*}

Note that $P_K(\gL)\neq 0$ if and only if $\{\gL_1,\dots,\gL_p\}$ and $\{\gL_{p+1},\dots,\gL_{p-k},\gL_{n-k+1},\dots,\gL_{n}\}$ are each a permutation of $\{1,2,\dots,p\}$.  To determine when this happens we may assume that $p=q$. Therefore we consider 
\begin{equation*}\begin{split}
\gL&=\gl_0-2\rho(A)  \\
&=({p-a_1},{p-1-a_2},\dots,{p-k+1-a_k},\, p-a_1',{p-1-a_2'},\dots,{k+1-a_{p-k}'}\,|\, \\
&\quad\quad {p-k+b_1},{p-k-1+b_2},\dots,{1+b_{p-k}},\,{k+b_1'},{k-1+b_2'},\dots,{1+b_k'}).
\end{split}\end{equation*}
By (\ref{eqn:ineqA}) and (\ref{eqn:ineqC}) the coordinates of $\gL$ satisfy the following inequalities:
\begin{equation}\label{eqn:ineq}
\begin{array}{rll}
k-i+1\leq &\hspace{-5pt}\gL_i  \leq p-i+1, & i=1,\dots,k \\
p-k-j+1\leq &\hspace{-5pt} \gL_{k+j}  \leq p-j+1, & j=1,\dots,p-k \\
p-k-j+1\leq &\hspace{-5pt} \gL_{p+j}  \leq p-j+1, & j=1,\dots,p-k \\
k-i+1\leq &\hspace{-5pt} \gL_{2p-k+i}  \leq p-i+1 , & i=1,\dots,k. 
\end{array}
\end{equation}

\noindent \underline{\bf{Claim:}} Under the assumption that $p=q$ and $\gL=\gl_0-2\rho(A)$ as above, $P_K(\gL)\neq 0$ if and only if there is a shuffle $i_1>\cdots>i_k, j_1>\cdots j_{p-k}$ of $p,p-1,\dots 2,1$ so that 
\begin{align*}
A=&\{\ga_{a,b}: 1\leq a\leq k, 1\leq b\leq p-k\}  \\
&\ga_{a,b}=\begin{cases} \eps_a-\eps_{p+b},  &\text{if }i_a<j_b  \\
                         \eps_{k+b}-\eps_{2p-k+a}, &\text{if }i_a>j_b.
           \end{cases}
\end{align*}
Observe that for such an $A$, 
\begin{equation}\label{eqn:shuffle}
\gL=(i_1,\dots, i_k, j_1,\dots , j_{p-k} \,|\,  j_1,\dots , j_{p-k},i_1,\dots, i_k)
\end{equation}
and $P_K(\gL)\neq 0$, and $\#A=k(p-k)$.

We prove that if $P_K(\gL)\neq 0$ then there is such a shuffle by induction on $p$.  If $k=0$ or $k=p$ this is immediate, since $\gD_n(\fl)$ and $A$ are empty.  In particular the claim holds if $p=1$.

Assume that $p>1$ and $1\leq k\leq p-1$.  Since $P_K(\gL)\neq 0$, the inequalities (\ref{eqn:ineq}) require that $\gL_1=p$ or $\gL_{k+1}=p$.

\noindent {\bf Case 1:} $\gL_1=p$, i.e., $a_1=0$.  This means that $\eps_1-\eps_{p+b}\notin A$, for $b=1,2,\dots , p-k.$  It follows that for $j=1,2,\dots,p-k$,
$$
b_j\leq k-1 \quad\text{and}\quad  p-k\leq \gL_{p+j} \leq p-j.
$$
Therefore $\gL_{2p-k+1}=p$ (since $\gL_{p-1}\leq p-1$).  This means that $b_1'=p-k$ and $a_j'\geq 1, j=1,2,\dots,p-k$.  So $\eps_{k+b}-\eps_{2p-k+1}\in A$, for $b=1,2,\dots,p-k$.  Ignoring the coordinates $\gL_1=p$ and $\gL_{2p-k+1}=p$ we are left with
\begin{equation}\label{eqn:C5}
\begin{split}
\gL&= (\bullet, p-1-a_2,\dots, p-k+1-a_k,  p-1-a_1'',\dots, k-a_{p-k}'' \,|\,  \\
   & p-k-+b_1,\dots, 1+b_{p-k}, \bullet, k-1+b_2',\dots, 1+b_k')
\end{split}
\end{equation}
where $0\leq a_i''=a_i-1 \leq k-1$ and the first $p-1$ coordinates are a permutation of $\{1,2,\dots,p-1\}$, as are the last $p-1$ coordinates. We are precisely in the $p-1$ case with $k$ replaced by $k-1$.  Induction says that (\ref{eqn:C5}) is 
\begin{equation*}
\gL=(p,i_2,\dots, i_k, j_1,\dots , j_{p-k} \,|\,  j_1,\dots , j_{p-k},p,i_2,\dots, i_k)
\end{equation*}
for some shuffle of $p-1,\dots 2,1$.  This gives an $\tilde{A}$ as in the claim (for $p-1$), which along with  $\eps_{k+b}-\eps_{2p-k+1}$, for $b=1,2,\dots,p-k,$ corresponds to the shuffle   $p=i_1>\cdots>i_k, j_1>\cdots j_{p-k}$ of $p,\dots 2,1$.

\noindent {\bf Case 2:} $\gL_{k+1}=p$, i.e., $a_1'=0$. This case is essentially the same as Case 1. We see that $\gL_{p+1}=p$ and $\eps_a-ge_{p+1}\in A, a=1,2,\dots,k$.  A reduction to $p-1$ and the same $k$ gives $\gL=\gl_0-2\rho(A)$, with $A$ defined by a shuffle $i_1>\cdots>i_k, p=j_1>\cdots j_{p-k}$

Thus, $\gL$ arises from a shuffle when $P_K(\gL)\neq 0$.
Note that the two cases result in all possible shuffles.  The claim is proved.

To compute $c_k^{\hspace{1pt}p,q}$, it remains to compute $P_K(\Lambda)$ for the $\Lambda$ as in (\ref{eqn:shuffle}).  We have that $P_K(\Lambda)=\pm 1$ for each such $\Lambda$, since $P_K(p,\dots,2,1\,|\, q,\dots,2,1)=1$.  To compute the sign, we first use $(p-k)(q-p+k)$ transpositions to bring $\Lambda$ to 
\[
\gL=(i_1,\dots,i_k,j_1,\dots,j_{p-k}\,|\, q,\dots,p+1, i_1,\dots,i_k, j_1,\dots,j_{p-k}).
\]
Now the number of transpositions needed to bring the coordinates to the left of the bar to descending order is equal to the number of transpositions needed to bring the coordinates to the right of the bar to descending order.
So the total number of transpositions we need at this step is even and so for each $\Lambda$ as in \ref{eqn:shuffle}
\[
P_K(\Lambda)=(-1)^{(p-k)(q-p+k)}.
\]
As we noted above, each relevant $A$ has $k(p-k)$ elements, and the only relevant set $C$ has $k(q-p)$ elements. The number of positive roots that are positive on $h_k$ is
$N=kq+(p-k)(q-p+k)+(q-p)k.$
Finally, the number of possible $\Lambda$ is the number of $(k,p-k)$-shuffles, i.e., $\binom{p}{k}$. So
\[
c_k^{\hspace{1pt}p,q}=(-1)^{k(n-k)}\binom{p}{k}
\]
Therefore we have computed the constant for the $k^{\rm th}$ real form.

\section{Computation of the constants: the exceptional cases}\label{sec:const-exceptional}

The following table lists all of the pairs $(G,K)$, with $G$ an exceptional simple group, for which the $W$ representation $\C[W]\cdot P_K(\gl)$  is Springer.  The table gives the corresponding complex orbits $\cO^\C$ in notation of \cite[\S13.3]{Carter85}.  Also given are the number of real forms and the values of the constants in each case.

\medskip
\begin{table}[H]
\setlength{\extrarowheight}{6pt}
\begin{tabular}{|l|c|c|c|c|c|c|}
\hline
  $G$  & $K$ & $\dim(\Cal O_k)$ &  $\Cal O^\C$ &  $\#$ real forms & constants\\
\hline
E6  & $A5\times A1$    & 20     & $3A_1$    & 2   &  4, 12   \\
      & $D5\times \C$     & 16     & $2A_1$    &  3  &  1, -2, 1    \\[2pt]
\hline
E7  & $D6\times A1$    & 32       &  $3A_1'$   &  2   &  4,  12,   \\
      & $A7$                    & 35       &  $4A_1$    &    2 &   64,  -64  \\
      & $E6\times \C$     & 27        & $3A_1''$   &    4  &   -1, 3, -3, 1  \\[2pt]
\hline
E8  & $E7\times A1\,^*$ & 56    &  $3A_1$    &   2   & 4, 12    \\
      & $D8$                      & 64    &  $4A_1$       &  1   &   256   \\[2pt]
\hline
F4  & $C3\times A1$      & 14    & $A_1+\widetilde{A_1}$ & 2  &   4, -4    \\[2pt]
\hline
G2  & $A1\times A1$     & 4      &  $\widetilde{A_1}$          &  1 &    4    \\[2pt]
\hline
\end{tabular}
\setlength{\extrarowheight}{0pt}\\
\caption{}
\end{table}

The case of $(F4, \Spin (9))$ is the only pair with $G$  exceptional (and $K$ of equal rank) for which  $\C[W]\cdot P_K(\gl)$ is not Springer.  

The pairs appearing in the table and the complex orbits $\cO^\C$ may be determined as follows.  The representation $\C[W]\cdot P_K(\gl)$ occurs in homogeneous degree $d=\#\gD_c^+$ polynomials on $\fh^*$.  It occurs in no lower degree since it contains the sign representation of $W_K$, which occurs in no lower degree.  The tables in \cite[\S13.3]{Carter85} show that $\C[W]\cdot P_K(\gl)$ must be of the form $\phi_{m,d}$ for some $m$.  In most cases there is just one possibility and the table tells us this one possibility is Springer.  The case of $(F4,\Spin (9))$ has two possibilities $\phi_{2,16}'$ and $\phi_{2,16}''$.  Only $\phi_{2,16}'$ contains the sign representation of $W_K$ (as given in \cite{Alvis05}).  However, $\phi_{2,16}'$ is not Springer (by \cite{Carter85}).

The real forms are listed below by giving the associated labelled (extended) Dynkin diagram as in \cite[Ch.~9]{CollingwoodMcGovern93}.  Given a nilpotent orbit there is a normal triple $\{x,h,y\}$ and a Dynkin diagram labelled with $\ga(h)$ for each compact simple root.  It may be arranged that each $\ga(h)\geq 0$.  In the diagrams below, $\gg$ is the highest root and the noncompact roots are blackened.  Recall that the $h$ determines $\fq$.  The choice of $S^\pm$ is made as described in \ref{eqn:sign}.

The constants are computed by evaluating both sides of equation (\ref{eqn:poly}) at a convenient point (such as $\gl=\rho_c(\fl)$) by using a computer\footnote{The computations for $E8$ are particularly long and were performed  at the OSU High Performance Computing Center at Oklahoma State University,  supported in part through the National Science Foundation grant OCI-1126330.  They were also carried out on the computer cluster of the Institut Elie Cartan de Lorraine (UMR 7502-CNRS).}.  On Table 2 the constants correspond to the real forms below (and are listed in the same order as the real forms are listed).




\noindent (1)  $E6,\, \f{e}_{6(2)},\,K=A5\times A1$.   Two real forms.
\vspace{41pt}

\scalebox{.75}{\mbox{\begin{picture}(0,0)
\multiput(5,0)(30,0){5}{\circle{8}}
\multiput(9,0)(30,0){4}{{\line(1,0){22}}}
\multiputlist(5,-12)(30,0){$0$,$0$,$1$,$0$,$0$}
\put(65,30){\circle*{8}}\put(65,4){\line(0,1){22}}
\put(65,61){\circle{8}}\put(51,56){$3$}\put(71,57){$-\gg$}
\thicklines{\dottedline{4}(65,34)(65,56)}
\end{picture}
}
\hspace{160pt}
\mbox{\begin{picture}(0,0)
\multiput(5,0)(30,0){5}{\circle{8}}
\multiput(9,0)(30,0){4}{{\line(1,0){22}}}
\multiputlist(5,-12)(30,0){$1$,$0$,$1$,$0$,$1$}
\put(65,30){\circle*{8}}\put(65,4){\line(0,1){22}}
\put(65,61){\circle{8}}\put(51,56){$1$}\put(71,57){$-\gg$}
\thicklines{\dottedline{4}(65,34)(65,56)}
\end{picture}
}} 
\vspace{14pt}

\noindent (2) $E6,\, \f{e}_{6(-14)},\,K=D5\times \C$.  Three real forms.
\vspace{26pt}

\scalebox{.75}{\mbox{\begin{picture}(0,0)
\multiput(5,0)(30,0){5}{\circle{8}}
\multiput(9,0)(30,0){4}
{{\line(1,0){22}}}
\multiputlist(5,-12)(30,0){$1$,$0$,$0$,$0$,$1$}
\put(65,30){\circle{8}}\put(65,4){\line(0,1){22}}\put(62,38){$0$}
\put(125,0){\circle*{8}}
\end{picture}}
\hspace{160pt}
\mbox{\begin{picture}(0,0)
\multiput(5,0)(30,0){5}{\circle{8}}
\multiput(9,0)(30,0){4}
{{\line(1,0){22}}}
\multiputlist(5,-12)(30,0){$0$,$0$,$0$,$1$,$-2$}
\put(65,30){\circle{8}}\put(65,4){\line(0,1){22}}\put(62,38){$1$}
\put(125,0){\circle*{8}}
\end{picture}}
\hspace{160pt}
\mbox{\begin{picture}(0,0)
\multiput(5,0)(30,0){5}{\circle{8}}
\multiput(9,0)(30,0){4}
{{\line(1,0){22}}}
\multiputlist(5,-12)(30,0){$1$,$0$,$0$,$0$,$-2$}
\put(65,30){\circle{8}}\put(65,4){\line(0,1){22}}\put(62,38){$0$}
\put(125,0){\circle*{8}}
\end{picture}}
}
\vspace{14pt}

\noindent (3) $E7,\, \f{e}_{7(-5)},\, K=D6\times A1$.   Two real forms.
\vspace{23pt}

\scalebox{.75}{\mbox{\begin{picture}(0,0)
\multiput(35,0)(30,0){6}{\circle{8}}\put(65,0){\circle{8}}
\multiput(39,0)(30,0){5}
{{\line(1,0){22}}}
\multiputlist(5,-12)(30,0){$3$,,$1$,$0$,$0$,$0$,$0$}
\put(95,30){\circle{8}}\put(95,4){\line(0,1){22}}\put(92,38){$0$}
\put(5,0){\circle{8}}\put(5,-12){}\thicklines{\dottedline{4}(9,0)(31,0)}
\put(35,0){\circle*{8}}
\end{picture}
}
\hspace{220pt}
\begin{picture}(0,0)
\multiput(35,0)(30,0){6}{\circle{8}}\put(65,0){\circle{8}}
\multiput(39,0)(30,0){5}
{{\line(1,0){22}}}
\multiputlist(5,-12)(30,0){$1$,,$1$,$0$,$0$,$1$,$0$}
\put(95,30){\circle{8}}\put(95,4){\line(0,1){22}}\put(92,38){$0$}
\put(5,0){\circle{8}}\put(5,-12){}\thicklines{\dottedline{4}(9,0)(31,0)}
\put(35,0){\circle*{8}}
\end{picture}
}
\vspace{14pt}

\noindent (4) $E7,\, \f{e}_{7(7)},\,  K=A7$.  Two real forms.

\vspace{17pt}

\scalebox{.75}{\mbox{\begin{picture}(0,0)
\multiput(35,0)(30,0){6}{\circle{8}}\put(65,0){\circle{8}}
\multiput(39,0)(30,0){5}
{{\line(1,0){22}}}
\multiputlist(5,-12)(30,0){$1$,$1$,$0$,$0$,$1$,$0$,$0$}
\put(95,30){\circle*{8}}\put(95,4){\line(0,1){22}}
\put(5,0){\circle{8}}\thicklines{\dottedline{4}(9,0)(31,0)}
\end{picture}
}
\hspace{220pt}
\begin{picture}(0,0)
\multiput(35,0)(30,0){6}{\circle{8}}\put(65,0){\circle{8}}
\multiput(39,0)(30,0){5}
{{\line(1,0){22}}}
\multiputlist(5,-12)(30,0){$0$,$0$,$1$,$0$,$0$,$1$,$1$}
\put(95,30){\circle*{8}}\put(95,4){\line(0,1){22}}
\put(5,0){\circle{8}}\thicklines{\dottedline{4}(9,0)(31,0)}
\end{picture}
}
\vspace{14pt}

\noindent (5) $E7,\, \f{e}_{7(-25)},\, K=E6\times \C$.  Four real forms.

\vspace{17pt}

\scalebox{.75}{\mbox{\begin{picture}(0,0)
\multiput(5,0)(30,0){6}{\circle{8}}\put(155,0){\circle*{8}}
\multiput(9,0)(30,0){5}
{{\line(1,0){22}}}
\multiputlist(5,-12)(30,0){,,,,,$2$}
\put(65,30){\circle{8}}\put(65,4){\line(0,1){22}}
\end{picture}
}
\hspace{180pt}
\begin{picture}(0,0)
\multiput(5,0)(30,0){6}{\circle{8}}\put(155,0){\circle*{8}}
\multiput(9,0)(30,0){5}
{{\line(1,0){22}}}
\multiputlist(5,-12)(30,0){,,,,$2$,$-2$}
\put(65,30){\circle{8}}\put(65,4){\line(0,1){22}}
\end{picture}
}
\vspace{19pt}

\scalebox{.75}{\mbox{\begin{picture}(0,0)
\multiput(5,0)(30,0){6}{\circle{8}}\put(155,0){\circle*{8}}
\multiput(9,0)(30,0){5}
{{\line(1,0){22}}}
\multiputlist(5,-12)(30,0){$2$,,,,,$-2$}
\put(65,30){\circle{8}}\put(65,4){\line(0,1){22}}
\end{picture}
}
\hspace{180pt}
\begin{picture}(0,0)
\multiput(5,0)(30,0){6}{\circle{8}}\put(155,0){\circle*{8}}
\multiput(9,0)(30,0){5}
{{\line(1,0){22}}}
\multiputlist(5,-12)(30,0){,,,,,$-2$}
\put(65,30){\circle{8}}\put(65,4){\line(0,1){22}}
\end{picture}
}
\vspace{14pt}

\noindent (6) $E8,\,\f{e}_{8(-24)}, K=E7\times A1$.    Two real forms.
\vspace{19pt}

\scalebox{.75}{\mbox{\begin{picture}(0,0)
\multiput(5,0)(30,0){7}{\circle{8}}
\multiput(9,0)(30,0){6}{{\line(1,0){22}}}
\multiputlist(5,-12)(30,0){,,,,,$1$,,$3$}
\put(65,30){\circle{8}}\put(65,4){\line(0,1){22}}
\put(215,0){\circle{8}}\thicklines{\dottedline{4}(189,0)(211,0)}
\put(185,0){\circle*{8}}
\end{picture}
\hspace{250pt}
{\begin{picture}(0,0)
\multiput(5,0)(30,0){7}{\circle{8}}
\multiput(9,0)(30,0){6}{{\line(1,0){22}}}
\multiputlist(5,-12)(30,0){$1$,,,,,$1$,,$1$}
\put(65,30){\circle{8}}\put(65,4){\line(0,1){22}}
\put(215,0){\circle{8}}\thicklines{\dottedline{4}(189,0)(211,0)}
\put(185,0){\circle*{8}}
\end{picture}
}} }
\vspace{14pt}

\noindent (7)  $E8,\,\f{e}_{8(8)}, K=D8$.  Just one real form.
\vspace{17pt}

\scalebox{.75}{\mbox{\begin{picture}(240,-50)
\multiput(5,0)(30,0){7}{\circle{8}}
\multiput(9,0)(30,0){6}{{\line(1,0){22}}}
\multiputlist(5,-12)(30,0){,,,$1$,,,,$1$}
\put(65,30){\circle{8}}\put(65,4){\line(0,1){22}}
\put(215,0){\circle{8}}\thicklines{\dottedline{4}(189,0)(211,0)}
\put(5,0){\circle*{8}}
\end{picture}
}} 

\vspace{14pt}

\noindent (8) $F4,\,\f{f}_{4(4)},\; K=C3\times A1$.  Two real forms.

\scalebox{.9}{\mbox{
\begin{picture}(0,0)
\multiput(35,0)(30,0){4}{\circle{8}}
\multiput(39,0)(60,0){2}
{{\line(1,0){22}}}
\multiputlist(5,-12)(30,0){$3$,,$1$,$0$,$0$}
\put(69,1){\line(1,0){22}}\put(69,-1){\line(1,0){22}}\put(77,-3){\scalebox{1.25}{\mbox{$>$}}}
\put(35,0){\circle*{8}}
\put(5,0){\circle{8}} \thicklines\dottedline{4}(9,0)(31,0)
\end{picture}\hspace{170pt}
\begin{picture}(0,0)
\multiput(35,0)(30,0){4}{\circle{8}}
\multiput(39,0)(60,0){2}
{{\line(1,0){22}}}
\multiputlist(5,-12)(30,0){$1$,,$1$,$0$,$1$}
\put(69,1){\line(1,0){22}}\put(69,-1){\line(1,0){22}}\put(77,-3){\scalebox{1.25}{\mbox{$>$}}}
\put(35,0){\circle*{8}}
\put(5,0){\circle{8}} 
\thicklines\dottedline{4}(9,0)(31,0)
\end{picture}
}}

\vspace{14pt}

\noindent (9) $G2, K=A1\times A1$.  Just one real form.

\scalebox{.75}{\mbox{\begin{picture}(0,0)
\multiput(5,0)(30,0){3}{\circle{8}}
\multiput(39,-2)(0,2){3}{{\line(1,0){22}}}
\multiputlist(5,-12)(30,0){$3$,,$1$}
\put(45,-4.3){\scalebox{1.25}{\mbox{\larger$>$}}}
\put(35,0){\circle*{8}}
\thicklines\dottedline{4}(9,0)(31,0)
\end{picture}
}}
\vspace{14pt}

\appendix
\section{}
Consider a variety $X$ and an algebraic group $H$ acting on $X$ with a finite number of orbits.  
 
As mentioned in \S\ref{sec:AC}, if $\cO=H\cdot x$, then 
\begin{equation}\label{eqn:homog}
\textup{K}^H(X)\simeq \Rep(H).
\end{equation}
If $i:Z\hookrightarrow X$ is a closed embedding of an $H$-stable subset $Z$ of $X$ and $U:=X\smallsetminus Z$, with $j:U\to X$ the inclusion map, then there is an exact sequence
 \begin{equation}\label{eqn:seq-a}
 \cdots\longrightarrow \textup{K}_1^H(U)\longrightarrow  \textup{K}^H(Z) \overset{i_*}{\longrightarrow } \textup{K}^H(X) \overset{j^*}{\longrightarrow } \textup{K}^H(U) \longrightarrow  0.
 \end{equation}
 
 For any homogeneous vector bundle $H\underset{H^x}{\times} E_\tau$ on $\cO=H\cdot x$, $\cE_\cO(\tau)$ is the sheaf of local sections.  Then by surjectivity of $j^*$ in the exact the sequence (\ref{eqn:seq-a}) applied to $X=\bar{\cO}, Z=\bar{\cO}\smallsetminus \cO$, $\cE_\cO(\tau)$ extends to the closure of $\cO$.   Extending further (by zero) to a coherent sheaf on all of $X$ gives an $H$-equivariant coherent sheaf $\tilde{\cE}_{\cO}(\tau)$ on $X$ extending $\cE_\cO(\tau)$.  
 
For the action of $H$ on $X$, list the orbits as $\cO_1,\dots,\cO_m$  in a way that is compatible with the closure relations, i.e., so that $\cO_k\subset\bar{\cO}_j$ implies $k\leq j$.  Choose base points $x_k$ in each orbit and write $\cO_k= K\cdot x_k$.  Applying the construction above to each orbit and each isotropy representation we obtain coherent sheaves $\tilde{\cE}_{\cO_k}(\tau_{kl})$.
 
\begin{Lem}\label{lem:span}  $\textup{K}^H(X)$ is spanned (over $\Z$) by $\{\tilde{\cE}_{\cO_k}(\tau_{kl}) :k=1,\dots,m, \tau_{kl}\in (H^{x_k})^{\widehat{\,}}\;\}$.
\end{Lem}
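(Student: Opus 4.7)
The plan is to prove Lemma \ref{lem:span} by induction on the number $m$ of $H$-orbits on $X$, peeling off one open orbit at a time via the localization sequence \eqref{eqn:seq-a}. The base case $m=1$ is immediate: if $X=\cO_1=H\cdot x_1$ then $\textup{K}^H(X)\simeq\Rep(H^{x_1})$ is spanned over $\Z$ by the classes $[\tau_{1l}]$, which correspond to the sheaves $\tilde{\cE}_{\cO_1}(\tau_{1l})=\cE_{\cO_1}(\tau_{1l})$ under that identification.

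For the inductive step, I first check that the maximal orbit $\cO_m$ is open in $X$. If $\cO_m\cap\bar{\cO}_j\neq\emptyset$ for some $j<m$, then $H$-invariance and closedness of $\bar{\cO}_j$ would force $\cO_m\subset\bar{\cO}_j$, contradicting the chosen ordering; and since every orbit contained in $\bar{\cO}_m\smallsetminus\cO_m$ must be some $\cO_j$ with $j<m$, this gives $X\smallsetminus\bigcup_{j<m}\bar{\cO}_j=\cO_m$. With $Z:=\bigcup_{j<m}\bar{\cO}_j$ closed and $U:=\cO_m$ open, the excision sequence \eqref{eqn:seq-a} produces
\begin{equation*}
\textup{K}^H(Z)\overset{i_*}{\longrightarrow}\textup{K}^H(X)\overset{j^*}{\longrightarrow}\textup{K}^H(\cO_m)\longrightarrow 0.
\end{equation*}
Given $\alpha\in\textup{K}^H(X)$, surjectivity of $j^*$ together with $\textup{K}^H(\cO_m)\simeq\Rep(H^{x_m})$ produces integers $a_l$ with $j^*(\alpha)=\sum_l a_l[\cE_{\cO_m}(\tau_{ml})]$. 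Since $j^*[\tilde{\cE}_{\cO_m}(\tau_{ml})]=[\cE_{\cO_m}(\tau_{ml})]$ by construction, the difference $\alpha-\sum_l a_l[\tilde{\cE}_{\cO_m}(\tau_{ml})]$ lies in $\Ker(j^*)$, which by exactness equals the image of $i_*$. The inductive hypothesis applies to the action of $H$ on $Z$ (only $m-1$ orbits), so $\textup{K}^H(Z)$ is $\Z$-spanned by analogous classes for $k<m$; applying $i_*$ (extension by zero) to such a class yields exactly $[\tilde{\cE}_{\cO_k}(\tau_{kl})]\in\textup{K}^H(X)$, since both constructions amount to extending the bundle on $\cO_k$ first to $\bar{\cO}_k$ and then by zero to the ambient space. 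Combining the two contributions exhibits $\alpha$ in the claimed span.

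The only conceptual check is this last compatibility: the generators $\tilde{\cE}_{\cO_k}(\tau_{kl})$ with $k<m$ built directly on $X$ must agree in K-theory with $i_*$ of the ones the induction produces on $Z$. This is harmless because the only genuine choice is the extension from $\cO_k$ to $\bar{\cO}_k$, which is carried out identically in both settings, and the subsequent extension by zero is canonical. I do not expect a real obstacle here. The harder statement that this spanning set is in fact a $\Z$-basis of $\textup{K}^H(X)$ (Theorem \ref{thm:basis}) is independent of this argument and will require a separate linear-independence input, for instance a leading-coefficient map along each orbit as in Corollary \ref{cor:lt}.
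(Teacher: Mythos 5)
Your argument is correct and is essentially the paper's own proof: induction on the number of orbits, peeling off the open orbit $\cO_m$ via the localization sequence (\ref{eqn:seq-a}), matching the restriction to $\cO_m$ with a combination of the $\tilde{\cE}_{\cO_m}(\tau_{ml})$, and pushing the remainder in from $Z=X\smallsetminus\cO_m$ using the compatibility $\tilde{\cE}_{\cO_k}(\tau_{kl})=i_*\bigl(\tilde{\cE}_{\cO_k}'(\tau_{kl})\bigr)$. Your added checks (openness of $\cO_m$ and the extension compatibility) are points the paper states without elaboration, so nothing further is needed.
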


\begin{proof} We use induction on the number $m$ of orbits in $X$.  In our listing of orbits $\cO_m$ will be an open orbit.  Let $U=\cO_m$ and $Z=X\smallsetminus U$.   If the restriction of the equivariant sheaf $\cF$ to $U$ is zero, then $[\cF]$ is in the image of $i_*$.  By induction each element of $\textup{K}^H(Z)$ is an integer combination of the $\tilde{\cE}_{\cO_k}'(\tau_{kl}), k=1,\dots,m-1$ (where $\tilde{\cE}_{\cO_k}'(\tau_{kl})$ means the extensions to $Z$, instead of $X$, so $\tilde{\cE}_{\cO_k}(\tau_{kl})=i_*(\tilde{\cE}_{\cO_k}'(\tau_{kl})$)).  Therefore, $[\cF]$ is an integer combination of the $\tilde{\cE}_{\cO_k}(\tau_{kl}), k=1,\dots, m-1.$  If the restriction of $\cF$ to $U$ is not zero, then write this restriction as an integer combination  $\sum_{j}n_{mj}\cE_{\cO_m}(\tau_{mj})$,  Now $[\cF]-\sum_{j}n_{mj}[\tilde{\cE}_{\cO_m}(\tau_{mj})]$ is in the image of $i_*$, so (as above)  is an integer combination of the $\tilde{\cE}_{\cO_k}(\tau_{kl}), k=1,\dots, m-1.$ 
\end{proof}
 
\begin{Thm}\label{thm:basis-a} Suppose the algebraic group $H$ acts on a variety $X$ with $m$ of orbits and one constructs $\cB:=\{[\tilde{\cE}_{\cO_k}(\tau_{kl})] :k=1,\dots,m, \tau_{kl}\in (H^{x_k})^{\widehat{\,}}\;\}\subset \textup{K}^H(X)$ as above.  Then
\vspace{-8pt}
\begin{enumerate} 
\item for any closed embedding $i:Z\to X$ of an $H$-stable subset $Z$ of $X$,  $i_*: \textup{K}^H(Z)\to \textup{K}^H(X)$ is injective and 
\item $\cB$ is a $\Z$-basis of $\textup{K}^H(X)$.
\end{enumerate}
\end{Thm}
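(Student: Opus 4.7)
My plan is to prove both statements simultaneously by induction on the number $m$ of $H$-orbits of $X$. Order the orbits compatibly with closure so that $\cO_m$ is open; set $U:=\cO_m$ and $Z:=X\setminus U$. The base case $m=1$ is immediate from (\ref{eqn:homog}): $\textup{K}^H(X)=\textup{K}^H(\cO_1)\simeq\Rep(H^{x_1})$, and $\cB$ is the standard $\Z$-basis of irreducible representations, while the only $H$-stable closed subsets are $\emptyset$ and $X$, making statement~(1) trivial. For the inductive step, use the localization exact sequence
\[
\textup{K}_1^H(U)\xrightarrow{\,\partial\,}\textup{K}^H(Z)\xrightarrow{\,i_*\,}\textup{K}^H(X)\xrightarrow{\,j^*\,}\textup{K}^H(U)\to 0.
\]
Since $\bar{\cO}_k\subset Z$ for $k<m$ and $Z\cap U=\emptyset$, the extensions satisfy $j^*[\tilde{\cE}_{\cO_k}(\tau_{kl})]=0$ for $k<m$ and $j^*[\tilde{\cE}_{\cO_m}(\tau_{ml})]=[\cE_{\cO_m}(\tau_{ml})]$; the latter classes give the standard basis of $\textup{K}^H(U)\simeq\Rep(H^{x_m})$.

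Granting the key technical claim that $i_*$ is injective, statement~(2) follows quickly. For any vanishing $\Z$-combination $\sum n_{kl}[\tilde{\cE}_{\cO_k}(\tau_{kl})]=0$, applying $j^*$ forces $n_{ml}=0$ for all $l$; the residual sum lies in $\mathrm{Im}(i_*)$ and equals $i_*(\alpha)$ with $\alpha:=\sum_{k<m,l} n_{kl}[\tilde{\cE}'_{\cO_k}(\tau_{kl})]\in\textup{K}^H(Z)$, where $\tilde{\cE}'_{\cO_k}$ is the analogous extension inside $Z$. Injectivity of $i_*$ gives $\alpha=0$, and the inductive hypothesis applied to $Z$ forces the remaining $n_{kl}$ to vanish; combined with Lemma~\ref{lem:span} this makes $\cB$ a $\Z$-basis. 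Statement~(1) for a general closed $H$-stable $Z'\subset X$ then follows from statement~(2) applied to both $X$ and $Z'$ (inductively), since then $i_*$ sends the basis of $\textup{K}^H(Z')$ bijectively onto the subfamily of basis elements of $\textup{K}^H(X)$ indexed by orbits contained in $Z'$, which is $\Z$-linearly independent.

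Everything thus reduces to the vanishing of the connecting map $\partial:\textup{K}_1^H(U)\to\textup{K}^H(Z)$. At the $\textup{K}_0$ level the chosen extensions already furnish a $\Z$-linear section $s:\textup{K}^H(U)\to\textup{K}^H(X)$ of $j^*$, defined on the basis by $s([\cE_{\cO_m}(\tau)]):=[\tilde{\cE}_{\cO_m}(\tau)]$. The plan to prove $\partial=0$ is to promote $s$ to a section at the level of Quillen's K-theory spectra, thereby splitting the localization fiber sequence in all degrees. The geometric tool is the resolution of singularities $\mu:H\times_{H\cap\bar Q}\fp[2]\to\bar{\cO}_m$ used in Section~\ref{sec:ext}: $\mu$ is proper and equivariant, so its derived pushforward preserves equivariant coherent sheaves, and one builds from it an extension functor from $\mathrm{Coh}^H(U)$ to the bounded derived category of equivariant coherent sheaves on $X$ (after extension by zero from $\bar{\cO}_m$) whose associated $\textup{K}_0$ map recovers $s$.

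The main obstacle is precisely this upgrade from a $\textup{K}_0$-level section to a section of K-theory spectra. Because the open immersion $j:U\hookrightarrow X$ has non-coherent pushforward, there is no direct categorical splitting, and one must genuinely exploit the resolution-based construction; verifying that the resulting derived functor is sufficiently well-behaved to split the localization fiber sequence at the spectrum level (so that \emph{all} the higher connecting maps, not just the one we care about, vanish) is the most technical part of the argument. Once this is done, the vanishing of $\partial$ and hence injectivity of $i_*$ is formal, and the induction closes.
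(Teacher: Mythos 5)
Your overall architecture coincides with the paper's: induction on the number of orbits, base case via (\ref{eqn:homog}), the localization sequence (\ref{eqn:seq-a}) with $U=\cO_m$ open, and the deduction of linear independence (hence, with Lemma \ref{lem:span}, of the basis statement) from injectivity of $i_*$. But the decisive step --- injectivity of $i_*$, equivalently that the connecting map $\partial\colon \textup{K}_1^H(U)\to\textup{K}^H(Z)$ has zero image --- is precisely what you leave unproven. Your plan to upgrade the $\textup{K}_0$-level section $s$ to a section of K-theory spectra is not carried out, and it is doubtful it can be in this form: $s$ is defined only on a $\Z$-basis by ad hoc choices of extensions, so it is not induced by any exact functor and carries no categorical content that could split the localization fiber sequence; moreover the resolution $\mu$ of Section \ref{sec:ext} is specific to the pair $(G,K)$ and nilpotent $K$-orbits, whereas the theorem is a general statement about any algebraic group acting with finitely many orbits. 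As written, the proposal therefore has a genuine gap at its central point.

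The paper closes this gap by a much softer, purely algebraic argument that does not split anything: it shows $\partial$ is forced to vanish. By Quillen's d\'evissage, $\textup{K}_1^H(U)$ is $\textup{K}_1$ of the category of algebraic representations of $H^{x_m}$, which equals $\textup{K}_1$ of its semisimple subcategory, namely $\bigoplus_\tau \textup{K}_1(\C)\simeq\bigoplus_\tau \C^\times$ with one summand per irreducible $\tau$; this is a divisible group. The inductive hypothesis (part (2) for $Z$, which has fewer orbits) makes $\textup{K}^H(Z)$ a free abelian group, and there are no nonzero homomorphisms from a divisible group to a free abelian group; hence $\partial=0$ and $i_*$ is injective, with no higher-categorical input and no use of the resolution. (For part (1) with an arbitrary closed $H$-stable $Z'$, the paper factors $i$ as $Z'\hookrightarrow X\smallsetminus\cO_m\hookrightarrow X$ and combines the claim with induction; your alternative of deducing (1) from (2) could also be made to work, but only after the inductive step has been secured by the divisibility argument or something equivalent.) If you replace your spectrum-level splitting by this computation of $\textup{K}_1^H(U)$ and the divisible-versus-free observation, your proof closes.
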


We prove this by induction on the number of orbits of $H$ on $X$.  If there is just one orbit, then $X=\cO=H\cdot x$  and the coherent sheaves $\cE_\cO(\tau)$ are a basis; this is (\ref{eqn:homog}).   (1) holds trivially.
 
Now suppose the theorem holds for all $H$-spaces with fewer than $m$ orbits.  List the orbits  as $\cO_1,\dots,\cO_m$ compatible with the closure relations.

We prove (1) under the inductive hypothesis.

 \noindent{\bf Claim:}  If $\cO\subset X$ is any open $H$-orbit dense in $X$ and $Z=X\smallsetminus\cO$,  then the  map 
\begin{equation}\label{eqn:star}\tag{*}
i_*: \textup{K}^H(Z)\to  \textup{K}^H(X)
\end{equation}
is injective.

Given the exact sequence (\ref{eqn:seq-a}), it will suffice to show that there is no nonzero homomorphism $\textup{K}_1^H(\cO)\to  \textup{K}^H(Z)$.  We compute $\textup{K}_1^H(\cO)$.  This is $\textup{K}_1$ of the category of $H$-equivariant sheaves on the orbit, that is, the category of (algebraic) representations of $H^x$, where $\cO=H/H^x$.  We will see that 
$$\textup{K}_1^H(\cO)\simeq  \oplus \C^\times,$$
one summand for each irreducible representation of $H^x$.  Once this is done, the claim will be proved, since our induction hypothesis says $\textup{K}^H(Z)$ is a free abelian group and there are no nonzero homomorphisms from a divisible group into a free abelian group.

For the computation of $\textup{K}_1^H(\cO)$, we will follow the discussion in \cite[Ch. V, \S4]{Weibel13}.  $\textup{K}_1^H(\cO)$ is $\textup{K}_1$ of the category of $H^x$-representations.  If we replace this category by its (exact) subcategory of semisimple $H^x$-representations, then the $\textup{K}$-theory remains the same; this is Quillen's `D\'evissage' (\cite[Thm. 4]{Quillen73} or \cite[Thm. 4.1]{Weibel13}).  By \cite[4.3]{Weibel13}, we get $\textup{K}_1^H(\cO)\simeq \oplus \textup{K}_1(\C)$, with one summand for each irreducible $H^x$-representation.  But it is well-known that $\textup{K}_1(\C)\simeq \C^\times$ (e.g., \cite[Ch. III, \S1]{Weibel13}).  We have now shown that $\displaystyle\textup{K}_1^H(\cO)\simeq \bigoplus_{\tau\in (H^x)^{\dual}} \C^\times,$ completing the proof of the claim.

Now for the proof of (1).   We may assume that $\cO_m\subset X\smallsetminus Z$ (by replacing $\cO_m$ by an open orbit in the open set $X\smallsetminus Z$, if necessary).  Set $Y=X\smallsetminus\cO_m$.  Then we have closed embeddings $i_1:Z\hookrightarrow Y$ and $i_2:Y\hookrightarrow X$, the composition being $i$.  Therefore $i_*={i_2}_*\, {i_1}_*$ is injective, since $i_2^*$ is an injection by the claim and $i_1^*$ is an injection by induction.
 
Now we show part (2) by verifying that $\cB$ is independent.

Suppose
 \begin{equation}\label{eqn:combo}
 \sum_{k=1}^m\sum_l n_{kl}\tilde{\cE}_{\cO_k}(\tau_{kl})=0 \text{ in } \textup{K}^H(X).
 \end{equation}
Let $Z=X\smallsetminus \cO_m$ (recall that $\cO_m$ is open) and let
$$i:Z\hookrightarrow X  \text{ and }j:\cO_m\hookrightarrow X.$$
Apply $j^*$ to both sides of (\ref{eqn:combo}) and get 
$$\sum_l n_{ml}{\cE}_{\cO_m}(\tau_{kl})=0, \text{ in } \textup{K}^H(\cO_m).$$
This implies $n_{ml}=0$, all $l$ (by (\ref{eqn:homog})).  By exactness of (\ref{eqn:seq-a})
$$\sum_{k=1}^{m-1}\sum_l n_{kl}\tilde{\cE}_{\cO_k}(\tau_{kl})$$
 lies in the image of $i_*$.  For the moment, write ${\cE}_{\cO_k}'(\tau_{kl})$ for an extension of ${\cE}_{\cO_k}(\tau_{kl})$ to $\bar{\cO}_k$, and $\tilde{\cE}_{\cO_k}'(\tau_{kl})$ for its  extension (by zero) to $Z$.  So $i_*(\tilde{\cE}_{\cO_k}'(\tau_{kl}))=\tilde{\cE}_{\cO_k}(\tau_{kl})$.  By the injectivity of $i_*$ we have
 $$ \sum_{k=1}^{m-1}\sum_l n_{kl}\tilde{\cE}_{\cO_k}'(\tau_{kl})=0, \text{ in } \textup{K}^H(Z).$$
 By induction, $n_{kl}=0,k=1,\dots,m-1, $ all $l$.  Thus, independence is proved, completing the induction argument.
  \hfill{$\square$}
 
\begin{Rem} Part (1) of the theorem can fail  without the assumption that there are a finite number of $H$-orbits on $X$.  
Consider the example of $H=\{e\}$ with trivial action on $X=\C$; $Z=\{0\}$ is the complement of the open set $\C^\times\subset X$.   There is an exact sequence
$$0\to \C[x] \overset{x}{\to} \C[x] \to \C[x]/(x)\to 0.$$
So $[ \C[x]/(x)]=0$ in $ K^{\{e\}}(\C)$, this is the extension by zero of the constant sheaf on $\{0\}$.
Note that the map `multiplication' by $x$ is not $\C^\times$-equivariant, so the same argument does not apply in $\C^\times$-equivariant $\textup{K}$-theory.  
\end{Rem}

\begin{Cor}\label{cor:well-def}  Suppose $i_1:Z_1\hookrightarrow X$ and $i_2:Z_2\hookrightarrow X$ are closed embeddings of $H$-stable subsets and $i_{12}:Z_1\cap Z_2\hookrightarrow X$.  If $\cE\in \textup{K}^H(X)$ is in the image of both ${i_1}_*$ and ${i_2}_*$, then $\cE$ is in the image of ${i_{12}}_*: \textup{K}^H(Z_1\cap Z_2)\to \textup{K}^H(X)$.
\end{Cor}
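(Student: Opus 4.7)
The strategy is to expand $\cE$ in the $\Z$-basis of $\textup{K}^H(X)$ provided by Theorem \ref{thm:basis-a} and to show, by restricting to open complements, that every basis element whose underlying orbit is not contained in $Z_1\cap Z_2$ must appear with coefficient zero. Once this vanishing is established, the conclusion is immediate: each remaining basis element $[\tilde{\cE}_{\cO_k}(\tau_{kl})]$ with $\cO_k\subset Z_1\cap Z_2$ is the pushforward under $(i_{12})_*$ of the analogous basis element of $\textup{K}^H(Z_1\cap Z_2)$ built by the same ``extend to the closure, then extend by zero'' procedure performed entirely inside $Z_1\cap Z_2$.

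The crucial intermediate claim I will establish is: if $Z\subset X$ is any closed $H$-stable subset and $\cE$ lies in the image of $i_{Z,*}:\textup{K}^H(Z)\to \textup{K}^H(X)$, then in the expansion $\cE=\sum_{k,l} n_{kl}[\tilde{\cE}_{\cO_k}(\tau_{kl})]$ one has $n_{kl}=0$ for every orbit $\cO_k\not\subset Z$. To prove this, let $U=X\setminus Z$ with inclusion $j:U\hookrightarrow X$; exactness of (\ref{eqn:seq-a}) yields $j^*\cE=0$. Because $H$-orbits are irreducible and $Z$ is closed and $H$-stable, each orbit lies entirely in $Z$ or entirely in $U$, so $j^*[\tilde{\cE}_{\cO_k}(\tau_{kl})]=0$ whenever $\cO_k\subset Z$. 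For $\cO_k\subset U$, the restriction $j^*\tilde{\cE}_{\cO_k}(\tau_{kl})$ is a coherent $H$-equivariant sheaf on $U$ supported in $\bar{\cO}_k\cap U$ (which is the closure of $\cO_k$ in $U$) whose further restriction to $\cO_k$ is the original homogeneous bundle sheaf $\cE_{\cO_k}(\tau_{kl})$; hence it is itself a legitimate ``extension then push by zero'' in the sense of Theorem \ref{thm:basis-a} applied to the $H$-variety $U$. Since that theorem produces a $\Z$-basis for \emph{any} choice of such extensions, the family $\{j^*[\tilde{\cE}_{\cO_k}(\tau_{kl})]:\cO_k\subset U\}$ is a $\Z$-basis of $\textup{K}^H(U)$, and the relation $j^*\cE=0$ therefore forces $n_{kl}=0$ for every $\cO_k\subset U$.

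Applying the claim first to $Z=Z_1$ and then to $Z=Z_2$ leaves only basis terms indexed by orbits $\cO_k\subset Z_1\cap Z_2$ in the expansion of $\cE$, which, as explained above, gives the required preimage in $\textup{K}^H(Z_1\cap Z_2)$. The one delicate point in this argument is the assertion that the restricted family $\{j^*[\tilde{\cE}_{\cO_k}(\tau_{kl})]:\cO_k\subset U\}$ really does form a basis of $\textup{K}^H(U)$; this is where the flexibility in the choice of extensions built into Theorem \ref{thm:basis-a} is essential, and verifying it reduces to the observation that the closure of $\cO_k$ in $U$ is $\bar{\cO}_k\cap U$ and that restriction of sheaves preserves the defining properties of the construction. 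Everything else is routine bookkeeping with the exact sequence (\ref{eqn:seq-a}).
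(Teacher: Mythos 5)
Your proof is correct, but it is organized differently from the paper's. The paper writes $\cE={i_1}_*(\cF_1)={i_2}_*(\cF_2)$, expands each $\cF_p$ in a basis of $\textup{K}^H(Z_p)$ made of extension classes (Theorem \ref{thm:basis-a} applied to $Z_p$), pushes both expansions forward to $\textup{K}^H(X)$, and appeals to independence to conclude that only orbits contained in $Z_1\cap Z_2$ carry nonzero coefficients. You instead fix a single basis of $\textup{K}^H(X)$ and prove a support lemma: if $\cE$ lies in the image of $i_{Z,*}$ for a closed $H$-stable $Z$, then all coefficients attached to orbits not contained in $Z$ vanish. Your proof of this, via $j^*\cE=0$ for $j:U=X\smallsetminus Z\hookrightarrow X$ together with the observation that the restricted classes $j^*[\tilde{\cE}_{\cO_k}(\tau_{kl})]$, $\cO_k\subset U$, are themselves extensions of the kind allowed in Theorem \ref{thm:basis-a} for the $H$-variety $U$ (the closure of $\cO_k$ in $U$ being $\bar{\cO}_k\cap U$) and hence form a $\Z$-basis of $\textup{K}^H(U)$, is sound, and you correctly flag this as the delicate point. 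Your final step is also fine: since $Z_1\cap Z_2$ is closed, $\bar{\cO}_k\subset Z_1\cap Z_2$ for the surviving orbits, so each $\tilde{\cE}_{\cO_k}(\tau_{kl})$ is an extension by zero from $Z_1\cap Z_2$ and lies in the image of ${i_{12}}_*$. What your route buys is an explicit treatment of the dependence on the chosen extensions: your support lemma is essentially the leading-term statement the paper records separately as Corollary \ref{cor:lt}, whereas the paper's shorter argument compares two expansions built from two different families of extensions and its appeal to ``independence'' implicitly relies on exactly this kind of restriction (or maximal-orbit) argument. What the paper's route buys is brevity, since it stays entirely inside $\textup{K}^H(Z_1)$, $\textup{K}^H(Z_2)$ and $\textup{K}^H(X)$ and never needs to analyze $\textup{K}^H(U)$.
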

\begin{proof} For $p=1,2$ write $\cE={i_p}_*(\cF_p)$ with $i_p:Z_p\hookrightarrow X$ and $\cF_p\in \textup{K}^H(Z_p)$.  We know that extensions of the vector bundles on $\cO_k\subset Z_p$ form a basis $\{[\tilde{\cE}_{\cO_k}^p(\tau_{kl})]\}$ of $ \textup{K}^H(Z_p)$.  Expressing $\cF_p$ in terms of this basis gives 
$$\cF_p=\sum_{\cO_k\subset Z_p}\sum_l n_{kl}^p [\tilde{\cE}_{\cO_k}^p(\tau_{kl})], \,p=1,2.
$$
Noting that ${i_p}_*([\tilde{\cE}_{\cO_p}^k(\tau_{kl})])=[\tilde{\cE}_{\cO_k}(\tau_{kl})]$, when $\cO_k\subset Z_p$, gives
$$\cE=\sum_{\cO_k\subset Z_1}\sum_l n_{kl}^1 [\tilde{\cE}_{\cO_k}^1(\tau_{kl})]=\sum_{\cO_k\subset Z_2}\sum_l n_{kl}^2 [\tilde{\cE}_{\cO_k}^2(\tau_{kl})].
$$
By independence, it follows that only orbits $\cO_k$ contained in both $Z_1$ and $Z_2$ appear with nonzero coefficients.  That is,
$$\cE=\sum_{\cO_k\subset Z_1\cap Z_2}\sum_l n_{kl} [\tilde{\cE}_{\cO_k}(\tau_{kl})].
$$
We conclude that $\cE$ is in the image of ${i_{12}}_*: \textup{K}^H(Z_1\cap Z_2)\to \textup{K}^H(X)$.
\end{proof}

It follows from this corollary that we may make the following definition.

\begin{Def} The support of $\cE\in \textup{K}^H(X)$ is the smallest $Z\subset X$ so that $\cE$ is in the  image of $i_*: \textup{K}^H(Z)\to \textup{K}^H(X)$.
\end{Def}

When we write $\cE=\sum n_{kl}[\tilde{\cE}_{\cO_k}(\tau_{kl})]$, the support is the union of the closures of the maximal $\cO_k$'s that appear in the expression with nonzero coefficient $n_{kl}$ for some $l$ (and this is independent of the extensions $\tilde{\cE}_{\cO_k}(\tau_{kl})$).   In fact more is true.

\begin{Cor}\label{cor:lt}
Suppose two basis $\cB^{(1)}:=\{[\widetilde{\cE}_{\cO_k}^{\scriptscriptstyle{(1)}}(\tau_{kl})]\}$ and $\cB^{(2)}:=\{[\widetilde{\cE}_{\cO_k}^{\scriptscriptstyle{(2)}}(\tau_{kl})]\}$ have been chosen as in Theorem \ref{thm:basis}.   Let  $\cE\in \textup{K}^H(X)$  and write 
$$\cE=\sum n_{kl}^{\scriptscriptstyle{(1)}}[\widetilde{\cE}_{\cO_k}^{\scriptscriptstyle{(1)}}(\tau_{kl})]=\sum n_{kl}^{\scriptscriptstyle{(2)}}[\widetilde{\cE}_{\cO_k}^{\scriptscriptstyle{(2)}}(\tau_{kl})].$$
Then for each maximal $\cO_q$, we have $n_{ql}^{(1)}=n_{ql}^{(2)}$, for all $l$.
\end{Cor}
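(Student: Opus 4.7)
The plan is to read off the coefficient of $[\widetilde{\cE}^{(p)}_{\cO_q}(\tau_{ql})]$ by restricting $\cE$ to an open $H$-stable subset $U'\subset X$ on which $\cO_q$ becomes closed and every other orbit in the support of $\cE$ is removed. First, by Corollary \ref{cor:well-def} the support $Z:=\supp(\cE)$ is well defined and independent of any basis; by the injectivity of $i_*$ in Theorem \ref{thm:basis-a}(1) together with the uniqueness of basis expansions, the sums in $\cB^{(1)}$ and $\cB^{(2)}$ involve only orbits $\cO_k\subset Z$, and the maximal orbits that appear are precisely the maximal orbits of $Z$. Fix one such $\cO_q$ and enumerate the other maximal orbits of $Z$ as $\cO_{q_j}$, $j\neq q$.

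Second, I would set
\[
U' := X\setminus\Big((\bar{\cO}_q\setminus \cO_q)\ \cup\ \bigcup_{\substack{\cO_{q_j}\text{ maximal in }Z\\ j\neq q}} \bar{\cO}_{q_j}\Big).
\]
This is open and $H$-stable; maximality forces $\cO_q\subset U'$, and by construction $\bar{\cO}_q\cap U'=\cO_q$, so $\cO_q$ is closed in $U'$. For every other orbit $\cO_k\neq \cO_q$ appearing in either expansion of $\cE$, one has $\bar{\cO}_k\cap U'=\emptyset$: indeed, $\cO_k$ lies in some maximal $\bar{\cO}_{q_j}$ of $Z$; if $j\neq q$ then $\bar{\cO}_k\subset \bar{\cO}_{q_j}$ is removed directly, while if $j=q$ then $\cO_k\subsetneq \bar{\cO}_q$ forces $\bar{\cO}_k\subsetneq \bar{\cO}_q$ (distinct $H$-orbits cannot share a closure, as each orbit is open and dense in the irreducible variety $\bar{\cO}$), hence $\bar{\cO}_k\subset \bar{\cO}_q\setminus \cO_q$.

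Third, I would apply the restriction $j^*:\textup{K}^H(X)\to \textup{K}^H(U')$ of the open embedding $j:U'\hookrightarrow X$ to both expansions of $\cE$. For every $\cO_k\neq \cO_q$ in the support, $j^*([\widetilde{\cE}^{(p)}_{\cO_k}(\tau_{kl})])=0$ because the sheaf is supported on $\bar{\cO}_k$ and $\bar{\cO}_k\cap U'=\emptyset$. For $\cO_q$ itself, because $\cO_q$ is closed in $U'$, the restriction of $\widetilde{\cE}^{(p)}_{\cO_q}(\tau_{ql})$ to $U'$ is the extension by zero of $\cE_{\cO_q}(\tau_{ql})$ from the closed set $\cO_q$, which is canonical; call this class $[\widetilde{\cE}'_{\cO_q}(\tau_{ql})]\in \textup{K}^H(U')$. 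Since $\widetilde{\cE}^{(1)}_{\cO_q}(\tau_{ql})$ and $\widetilde{\cE}^{(2)}_{\cO_q}(\tau_{ql})$ differ only on $\bar{\cO}_q\setminus \cO_q$, they produce the same restricted class, and
\[
j^*(\cE)=\sum_l n_{ql}^{(1)}[\widetilde{\cE}'_{\cO_q}(\tau_{ql})]=\sum_l n_{ql}^{(2)}[\widetilde{\cE}'_{\cO_q}(\tau_{ql})].
\]
The $\Z$-independence of $\{[\widetilde{\cE}'_{\cO_q}(\tau_{ql})]\}_l$ inside the basis of $\textup{K}^H(U')$ from Theorem \ref{thm:basis-a}(2) then forces $n_{ql}^{(1)}=n_{ql}^{(2)}$ for every $l$.

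The main obstacle is geometric: verifying that $U'$ genuinely isolates $\cO_q$, and in particular that $\bar{\cO}_k\cap U'=\emptyset$ for every non-$\cO_q$ support orbit, including the delicate case $\cO_k\subsetneq \bar{\cO}_q$ where one needs the topological fact that distinct $H$-orbits have distinct closures. Once this is settled, everything else is a formal consequence of the exact sequence (\ref{eqn:seq-a}) and the basis property in Theorem \ref{thm:basis-a}.
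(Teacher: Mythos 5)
Your proof is correct, but it is organized differently from the paper's. The paper argues by induction on the number of orbits: if $\cO_q$ is open in $X$ it restricts directly to $\cO_q$, where every extension becomes the bundle class $[\cE_{\cO_q}(\tau_{ql})]$ and freeness of $\textup{K}^H(\cO_q)\simeq\Rep(H^{x_q})$ finishes; otherwise it passes to $\supp(\cE)\subsetneq X$ using the injectivity in Theorem \ref{thm:basis-a}(1) and the inductive hypothesis. You instead isolate $\cO_q$ in a single step by restricting to the open $H$-stable set $U'$ in which $\cO_q$ becomes closed and all other support orbits are deleted; since each $\tilde{\cE}^{(p)}_{\cO_q}(\tau_{ql})$ is by construction pushed forward from a sheaf on $\bar{\cO}_q$ whose restriction to $\cO_q$ is the bundle, and pushforward commutes with restriction to the open set $U'$ with $\bar{\cO}_q\cap U'=\cO_q$, both extensions restrict to the canonical extension by zero from the closed orbit, and independence in $\textup{K}^H(U')$ (Theorem \ref{thm:basis-a} applied to $U'$) gives $n_{ql}^{(1)}=n_{ql}^{(2)}$. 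What your route buys is a non-inductive, visibly extension-independent ``leading term'' computation; what the paper's route buys is brevity given the surrounding machinery and uniformity with the other appendix arguments. Two points you should tighten. First, the claim that both expansions involve only orbits $\cO_k\subset Z=\supp(\cE)$ does not follow from injectivity of $i_*$ alone: the right argument is that for $\cO_k\subset Z$ each $[\tilde{\cE}^{(p)}_{\cO_k}(\tau_{kl})]$ is $i_*$ of the corresponding extension viewed inside $Z$, so the $Z$-supported members of $\cB^{(p)}$ push forward from a basis of $\textup{K}^H(Z)$ (Theorem \ref{thm:basis} applied to $Z$); hence they span the image of $i_*$, and uniqueness of the $\cB^{(p)}$-expansion forces all other coefficients to vanish (this also yields $Z=\bigcup\bar{\cO}_k$ over appearing orbits, hence your identification of the maximal appearing orbits with the maximal orbits of $Z$). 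Second, the parenthetical appeal to irreducibility of $\bar{\cO}_q$ is unnecessary and can fail when $H$ is disconnected; all you need is that orbits are locally closed and pairwise disjoint, so $\cO_k\subset\bar{\cO}_q$ with $\cO_k\neq\cO_q$ already gives $\bar{\cO}_k\subset\bar{\cO}_q\smallsetminus\cO_q$, which is closed. With these routine justifications supplied, your argument is complete.
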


\begin{proof}  We proceed by induction on the number of orbits of $H$ in $X$.  Suppose $\cO_{q}$ is maximal in one (hence the other) expression for $\cE$.  \\
\underline{Case 1.}  $\cO_{q}$ is open in $X$.  Let $Z=X\smallsetminus \cO_{q}$ and $i:Z\hookrightarrow X$ and $j:\cO_{q}\hookrightarrow X$.  Then since $j^*([\widetilde{\cE}_{\cO_q}^{\scriptscriptstyle{(p)}}(\tau_{ql})])=[\cE_{\cO_q}(\tau_{ql})]\in \textup{K}^H(\cO_{q})$, for $p=1,2$,  and  $j^*([\widetilde{\cE}_{\cO_k}^{\scriptscriptstyle{(p)}}(\tau_{kl})])=0, k\neq q$, we have
\begin{align*}
j^*(\cE)&=\sum_{k,l} n_{kl}^{\scriptscriptstyle{(1)}} j^*([\cE_{\cO_k}(\tau_{kl})])=\sum_{l} n_{ql}^{\scriptscriptstyle{(1)}} [\cE_{\cO_q}(\tau_{ql})] \text{ and }  \\
j^*(\cE)&=\sum_{k,l} n_{kl}^{\scriptscriptstyle{(2)}} j^*([\cE_{\cO_k}(\tau_{kl})]) =\sum_{l} n_{ql}^{\scriptscriptstyle{(2)}} [\cE_{\cO_q}(\tau_{ql})],
\end{align*}
in $ \textup{K}^H(\cO_q)$.
By (\ref{thm:basis-a}), $n_{ql}^{\scriptscriptstyle{(1)}}=n_{ql}^{\scriptscriptstyle{(2)}}$, for all $l$. \\
\underline{Case 2.}   $\cO_{q}$ is not open in $X$.  Then $\supp(\cE)\subsetneq X$.  By induction (and the injectivity of $i_*: \textup{K}^H(\supp(\cE))\to \textup{K}^H(X)$) the statement follows.
\end{proof}


\providecommand{\bysame}{\leavevmode\hbox to3em{\hrulefill}\thinspace}
\providecommand{\MR}{\relax\ifhmode\unskip\space\fi MR }
\providecommand{\MRhref}[2]{%
  \href{http://www.ams.org/mathscinet-getitem?mr=#1}{#2}
}
\providecommand{\href}[2]{#2}

\end{document}